\newtheorem{definition}{Definition}[section]
\newenvironment{defi}{\begin{definition} \rm}{\end{definition}}
\newtheorem{prop}[definition]{Proposition}
\newtheorem{lemm}[definition]{Lemma}
\newtheorem{coro}[definition]{Corollary}
\newtheorem{theo}[definition]{Theorem}
\newtheorem{theo1}{Theorem}
\newtheorem{notation}[definition]{Notation}
\newtheorem{construction}[definition]{Construction}
\newtheorem{remark}[definition]{Remark}
\newenvironment{rema}{\begin{remark} \rm}{\end{remark}}
\newtheorem{remarks}[definition]{Remarks}
\newtheorem{example}[definition]{Example}
\newtheorem{examples}[definition]{Examples}
\newtheorem{nothing}[definition]{$\!\!$}
\newenvironment{proo}{{\flushleft \it Proof.}}{\hfill $\square$
  \vspace{2mm}}
\newenvironment{proo-prop}{{\flushleft \it Proof of Proposition
    \ref{prop-f_4}.}}{\hfill $\square$ \vspace{2mm}}
\newenvironment{proo-thm}{{\flushleft \it Proof of Theorem \ref{main-intro}.}}{\hfill $\square$ \vspace{2mm}}
\newtheorem{conjecture}[definition]{Conjecture}
\newtheorem{definition*}{Definition}[section]
\newenvironment{defi*}{\begin{definition*} \rm}{\end{definition*}}
\newtheorem{definitions*}[definition*]{Definitions}
\newenvironment{defis*}{\begin{definitions*} \rm}{\end{definitions*}}
\newtheorem{prop*}[definition*]{Proposition}
\newtheorem{lemm*}[definition*]{Lemma}
\newtheorem{coro*}[definition*]{Corollary}
\newtheorem{theo*}[definition*]{Theorem}
\newtheorem{remark*}[definition*]{Remark}
\newenvironment{rema*}{\begin{remark*} \rm}{\end{remark*}}
\newtheorem{remarks*}[definition*]{Remarks}
\newenvironment{remas*}{\begin{remarks*} \rm}{\end{remarks*}}
\newtheorem{example*}[definition*]{Example}
\newenvironment{exam*}{\begin{example*} \rm}{\end{example*}}
\newtheorem{examples*}[definition*]{Examples}
\newenvironment{exams*}{\begin{examples*} \rm}{\end{examples*}}
\newtheorem{nothing*}[definition*]{$\!\!$}
\newenvironment{noth*}{\begin{nothing*} \rm}{\end{nothing*}}
\newtheorem{commentaire*}[definition*]{Commentaire}
\def \rank {{{\rm rk}}}
\def \oo {{{\mathcal{O}}}}
\def \vs {\vskip}
\newcommand{\G}{\mathbb{G}}
\def \a {\alpha}
\def \b {\beta}
\newcommand{\p}{\mathbb{P}}
\newcommand{\scal}[1]{\langle #1 \rangle}
\begin{document}

\title{Spherical multiple flags}
\author{P. Achinger}
\address{Department of Mathematics, University of California, Berkeley CA 94720, USA}
\email{achinger@math.berkeley.edu}
\author{N. Perrin}
\address{Mathematisches Institut, Heinrich-Heine-Universit{\"a}t,
  40204 D{\"u}sseldorf, Germany}
\email{perrin@math.uni-duesseldorf.de}
\date{}

\begin{abstract}
For a reductive group $G$, the products of projective rational varieties homogeneous under $G$ that are spherical for $G$ have been classified by Stembridge. We consider the $B$-orbit closures in these spherical varieties and prove that under some mild restrictions they are normal, Cohen-Macaulay and have a rational resolution.
\end{abstract}

\maketitle

{\def\thefootnote{\relax}
 \footnote{ \hspace{-6.8mm}
 Key words: spherical varieties, normal and rational singularities, homogeneous spaces\\
 Mathematics Subject Classification:.}
 }

\vs - 0.7 cm

\centerline{\large{\bf Introduction}}

\vs 0.3 cm

%%%%%%%%%%%%%%%%%%%%%%%%%%%%%%%%%%%%%%%%%%%%%%%%%%%%%%%%%%%%%%%%%%%%%%%

A classical problem in geometric representation theory is to prove
regularity properties of $B$-orbit closures inside a $G$-variety $X$.
Here and henceforth $G$ is a reductive group over an algebraically
closed field $k$ and $B$ is a Borel subgroup of $G$. The most famous
example of such a theorem is the result of Mehta and Ramanathan
\cite{MR} that Schubert varieties (that is, $B$-orbit closures
inside $X = G/P$ a projective rational homogeneous space) are normal,
Cohen-Macaulay and have a rational resolution. For general
\emph{spherical varieties} (\emph{i.e.}, normal $G$-varieties with finitely many
$B$-orbits), this is more complicated and the $B$-orbit closures are
not even normal in general (for a survey of partial results in this direction, cf. \cite[Section 4.4]{survey}). In this paper, we restrict our attention to products
of homogeneous spaces. Our result is the following

\begin{theo1}
\label{main-intro} 
Assume that $G$ is a simply laced (i.e.,
with simple factors of types A, D, E). Let  $P_1, P_2$ be two
\emph{cominuscule} (see Definition \ref{def-comin}) parabolic subgroups of
$G$ containing $B$ and let $X = G/P_1 \times G/P_2$. Then the
$B$-orbit closures inside $X$ are normal, Cohen-Macaulay and have
a rational resolution.
\end{theo1}

To prove these regularity properties, we need to study in more detail
the $B$-orbit structure of $X$ and the \emph{weak order} (cf.
Definition \ref{defi-weak-order}) among the $B$-orbits. We prove the following two
facts, whose proof constitutes most of the paper, and which we hope
might be of independent interest:
\begin{enumerate}[(a)]
\item the minimal $B$-orbits with respect to the weak order are
$B\times B$-stable (see Theorem \ref{theo-spec}), hence their closures
are products of Schubert varieties, 
\item the action maps $P\times^B \oo \to P \oo$, where
$\mathcal{O}$ is a $B$-orbit in $X$, $P\supseteq B$ a minimal
parabolic subgroup with $P\mathcal{O} \neq \mathcal{O}$, are
birational (see Corollary \ref{N-coro}).
\end{enumerate}

With these results in hand, the structure of the proof of Theorem
\ref{main-intro} is as follows. For a $B$-orbit closure
$\bar \oo \subseteq X$, we find a minimal (with respect to the
weak order) $B$-orbit $\mathcal{O}'\preccurlyeq \mathcal{O}$. Since by (a)
the orbit closure $\bar \oo'$ is a product of two Schubert varieties, it admits a
rational resolution $Z\to \bar \oo'$ (for example, the product
of two Bott-Samelson resolutions \cite{demazure} of the two factors). Since $\oo' \preccurlyeq \oo$ there exists a sequence $P_{\gamma_1}, \cdots, P_{\gamma_i}$ of minimal parabolic subgroups of $G$ containing $B$ raising $\mathcal{O}'$ to $\mathcal{O}$ (see Definition \ref{defi-weak-order}). Then by (b) the action map
\[ P_{\gamma_r} \times^B \ldots \times^B P_{\gamma_1} \times Z \to \bar \oo  \]
is a rational resolution of $\bar \oo$. To prove normality, we
proceed by descending induction in the weak order. First, maximal
$B$-orbits are $G$-stable, and their closures are normal as locally trivial fibrations with Schubert varieties as fibers. In
the induction step, we use (a) and (b) again and follow ideas of Brion
\cite{brion2}. Finally, Cohen-Macaulayness follows from general arguments from Brion \cite[Section 3, Remark 2]{brion2} and the fact that this holds for the $G$-orbit closures.

In addition, we show that the "simply-lacedness" assumption in Theorem
\ref{main-intro} is necessary -- in Section 5 we find a $B$-orbit inside
$X=({\rm Sp}_6/P)^2$ where $P$ is a stabilizer of a $3$-dimensional
isotropic subspace whose closure is not normal (and in fact the
property (b) above fails). We do not know whether Theorem \ref{main-intro}
holds without the assumption that $P_1$ and $P_2$ be cominuscule. Note
that examples of such pairs with $X$ spherical are quite restricted --
see \cite{littelmann} and \cite{stembridge} for a complete list. The main reason
for the assumption that $P_1$, $P_2$ are cominuscule is that in
such case the $G$-orbits are induced from symmetric varieties (see Definition \ref{defi-ind} and Corollary \ref{cor-ind}) in which case minimal orbits for the weak order are closed.

The case of Theorem \ref{main-intro} when $X$ is a product of two Gra{\ss}mann
varieties was proved in \cite{BZ} thanks to
a detour into quiver representations. It was one of the motivations of
this work to present a direct proof of this
result. It was also inspired by a complete combinatorial description
of the weak order in a product of two Gra{\ss}mann varieties due to
Smirnov \cite{evgeny}, where the two phenomena (a) and (b) mentioned
above have been observed.

The structure of the paper is as follows. In Section 1, we define opposite pairs of parabolic subgroups and show how one can reduce the study of $G$-orbits inside $G/P_1 \times G/P_2$ to the case when $P_1$ and $P_2$ are opposite to each other. In that case the variety is symmetric which turns out to be very important. In Section 2, we recall the definition and basic properties of the weak order among the $B$-orbits in a spherical variety $X$ and prove (a) (Theorem \ref{theo-spec}). In Section 3, we introduce a distance function between torus-fixed points in $X$, generalizing a previous notion introduced in \cite{CMP1} and used in \cite{CMP2,CMP3,CP,BCMP} to study quantum cohomology of cominuscule rational homogeneous spaces. We use it to prove (b) (Corollary \ref{N-coro}). The proof of Theorem \ref{main-intro} occupies Section 4, and our counterexample with $G$ non-simply laced can be found in Section 5.

\tableofcontents

\section{Structure of $G$-orbits}
\label{section-g-orb}

Let $G$ be a reductive group, $T$ a maximal torus of $G$ and $B$ a Borel subgroup of $G$ containing $T$. Let $W=N_G(T)/T$ be the Weyl group associated to $T$. Let $P_1$ and $P_2$ be two parabolic subgroups of $G$ containing $B$ and define 
$$X=G/P_1\times G/P_2.$$ 
The variety $X$ has finitely many $G$-orbits. Any orbit is of the form: $G \cdot ( P_1 , w P_2)$ for some $w \in W$ and is isomorphic to $G/H$ with 
$$H=P_1\cap P_2^w$$ 
where $P_2^w=wP_2w^{-1}$. The inclusion morphism $\iota:G/H\to G/P_1\times G/P_2$ is induced by the morphism $G\to G\times G$ defined by $g\mapsto (g,g n_w)$ where $n_w$ is any representative of $w$ in $N_G(T)$.

In this section we prove a structure result on $G$-orbits which reduces the study to the case of an opposite pair $(P_1,P_2)$ (see Definition \ref{defi-opp-pairs}). For this we fix a $G$-orbit $G \cdot ( P_1 , w P_2) \simeq G/H$ of $X$ with $w \in W$ and $H=P_1\cap P_2^w$. 

\vs 0.2 cm

Recall that if $\chi:\G_m\to T$ is a cocharacter of $T$, we may define a parabolic subgroup $P_{\chi}$ of $G$ as follows:
$$P_{\chi}=\{g\in G\ /\ \lim_{t\to0}\chi(t)g\chi(t)^{-1} \textrm{ exists}\}.$$
In the above definition, the limit exists if the map $\G_m \to G$, $t\mapsto \chi(t) g 
\chi(t)^{-1}$ extends to $\mathbb{A}^1 \supseteq \G_m$. Note that $P_{\chi}$ contains $T$. Any parabolic subgroup containing $T$ can be defined this way. The set of all possible characters for a given parabolic $P$ is a semigroup with unit a the minimal cocharacter $\chi_P$ such that $P=P_{\chi_P}$. For example, the cocharacter of $P_2^w$ is $w(\chi_{P_2})$. 

\begin{defi}
\label{defi-opp-pairs}
A pair $(P_1,P_2)$ is called \emph{opposite} if $w_0(\chi_{P_1})=-\chi_{P_2}$, where $w_0$ is the longest element of the Weyl group.
\end{defi}

\begin{defi}
We define a parabolic subgroup $R$ of $G$ by its cocharacter 
$$\chi_R=\chi_{P_1} + w(\chi_{P_2}).$$ 
We denote by $L_R$ the Levi subgroup of $R$ containing $T$ and by $U_R$ the unipotent radical of $R$. We have a semidirect product $R=L_R\ltimes U_R$.
\end{defi}

\begin{lemm}
\label{lemm-LK}
Let $w_0^{L_R}$ be the longest element of the Weyl group of $L_R$.

(\i) The parabolic subgroup $R$ contains the intersection $P_1\cap P_2^w$.

(\i\i) The pair $(Q_1,Q_2)$ with $Q_1=L_R\cap P_1$ and $Q_2=(P_2^w\cap L_R)^{w_0^{L_R}}$ is opposite in $L_R$.
\end{lemm}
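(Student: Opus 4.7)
My plan is to establish both parts from the root-theoretic description of parabolic subgroups. Recall that for any cocharacter $\chi:\G_m \to T$, the Lie algebra of $P_\chi$ equals $\mathfrak{t} \oplus \bigoplus_{\langle\alpha,\chi\rangle\geq 0}\mathfrak{g}_\alpha$, and any closed connected subgroup of $G$ containing $T$ is determined by the set of root subgroups it contains.

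For (i), I would observe that $P_1 \cap P_2^w$ is a closed connected subgroup of $G$ containing $T$ (the intersection of two parabolics sharing a maximal torus is connected). Its Lie algebra is $\mathfrak{t} \oplus \bigoplus \mathfrak{g}_\alpha$, summed over roots $\alpha$ satisfying both $\langle\alpha,\chi_{P_1}\rangle\geq 0$ and $\langle\alpha,w(\chi_{P_2})\rangle\geq 0$. For every such $\alpha$ the sum $\langle\alpha,\chi_R\rangle = \langle\alpha,\chi_{P_1}\rangle + \langle\alpha,w(\chi_{P_2})\rangle$ is nonnegative, so $\mathfrak{g}_\alpha \subseteq \mathrm{Lie}(R)$. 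Hence $\mathrm{Lie}(P_1\cap P_2^w)\subseteq \mathrm{Lie}(R)$, and by connectedness $P_1\cap P_2^w \subseteq R$.

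For (ii), the plan is to identify the cocharacters of $L_R$ defining $Q_1$ and $Q_2$. The subgroup $Q_1 = L_R\cap P_1$ is cut out inside $L_R$ by the condition $\langle\alpha,\chi_{P_1}\rangle\geq 0$ on the roots of $L_R$, and likewise $P_2^w \cap L_R$ by $\langle\alpha,w(\chi_{P_2})\rangle\geq 0$. The crucial observation is that roots $\alpha$ of $L_R$ satisfy $\langle\alpha,\chi_R\rangle = 0$, hence $\langle\alpha,w(\chi_{P_2})\rangle = -\langle\alpha,\chi_{P_1}\rangle$; this identifies $P_2^w\cap L_R$ with the standard opposite parabolic of $Q_1$ inside $L_R$. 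Writing $\chi_{Q_1}$ for the minimal cocharacter of $Q_1$ in $L_R$, we then have $P_2^w\cap L_R = P^{L_R}_{-\chi_{Q_1}}$, and conjugating by $w_0^{L_R}\in N_{L_R}(T)$ yields $Q_2 = P^{L_R}_{-w_0^{L_R}(\chi_{Q_1})}$. Reading off minimal cocharacters gives $\chi_{Q_2} = -w_0^{L_R}(\chi_{Q_1})$, i.e.\ $w_0^{L_R}(\chi_{Q_1}) = -\chi_{Q_2}$, which is exactly the condition of Definition \ref{defi-opp-pairs} applied inside $L_R$.

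The main subtlety I anticipate is the passage from $\chi_{P_1}$, a cocharacter of $T\subseteq G$, to the minimal cocharacter $\chi_{Q_1}$ of $Q_1$ as a parabolic of $L_R$: these differ in general by a cocharacter of the center of $L_R$, but since $w_0^{L_R}$ fixes this central part, the opposite-pair relation is insensitive to the ambiguity. The conjugation by $w_0^{L_R}$ in the definition of $Q_2$ serves to replace the parabolic $P_2^w\cap L_R$ (which contains the Borel opposite to $B\cap L_R$) by one containing $B\cap L_R$, which is why the conjugation is needed in order to land on $Q_2$ rather than on $P_2^w\cap L_R$ itself.
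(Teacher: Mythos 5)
Your proof is correct and follows essentially the same approach as the paper: the paper dismisses (i) as ``obvious by definition'' and proves (ii) in one line by noting $\chi_{P_1}\vert_{L_R}+w(\chi_{P_2})\vert_{L_R}=0$, which is exactly your central observation that roots of $L_R$ pair to zero with $\chi_R$. Your additional care about the passage from $\chi_{P_1}\vert_{L_R}$ to the minimal cocharacter $\chi_{Q_1}$, and about why the conjugation by $w_0^{L_R}$ is needed, just spells out details the paper leaves implicit.
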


\begin{proo}
(\i) This is obvious by definition.

(\i\i) We have the equality $\chi_{P_1}\vert_{L_R}+ w(\chi_{P_2})\vert_{L_R}=0$ proving the result.
\end{proo}

\begin{defi}
We set $K = L_R\cap H$. Note that this is the Levi subgroup of both parabolic subgroups of the opposite pair $(Q_1,Q_2)$.
\end{defi}

We have a $G$-equivariant morphism $p:G/H\to G/R$, which is a locally trivial fibration with fiber isomorphic to $R/H$. In other words we have an isomorphism $G/H\simeq G\times^RR/H$. We have a $R$-equivariant morphism $R/H\to L_R/K$ which induces a morphism
$$G/H\simeq G\times^RR/H\to G\times^R L_R/K.$$
Note that since $K=Q_1\cap Q_2$, the diagonal embedding $L_R\to L_R\times L_R$ induces an embedding $L_R/K\to L_R/Q_1\times L_R/Q_2$.

Recall the following definition.

\begin{defi}
\label{def-comin}
A parabolic subgroup is \emph{cominuscule} if its associated cocharacter $\chi_P$ satisfies $\vert\scal{\chi_P,\a}\vert\leq1$ for any root $\a$.
\end{defi}

\begin{lemm}
\label{prop-triv}
(\i) The variety $L_R/K$ is the dense $L_R$-orbit in $L_R/Q_1\times L_R/Q_2$.

(\i\i) The fiber of $R/H\to L_R/K$ is isomorphic to $U_R/U_R\cap H$. It can be embedded in $U_R/U_R\cap P_1\times U_R/U_R\cap P_2^w$.

(\i\i\i) If ${P_1}$ is cominuscule, then the second factor $U_R/U_R\cap P_2^w$ is trivial.
\end{lemm}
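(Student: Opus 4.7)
The plan is to treat each of the three parts using $T$-invariance of the subgroups involved and the resulting root-space decompositions.

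For (i), the embedding $L_R/K \hookrightarrow L_R/Q_1 \times L_R/Q_2$ is the one described just before the lemma; it realizes $L_R/K$ as an $L_R$-orbit of dimension $\dim L_R - \dim K$. Since $(Q_1, Q_2)$ is opposite in $L_R$ with common Levi $K$, we have $\dim L_R - \dim K = 2\dim(L_R/Q_1) = \dim L_R/Q_1 + \dim L_R/Q_2$, which matches the dimension of the ambient variety. Hence the image is an open $L_R$-orbit, and by irreducibility it is the dense one.

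For (ii), both $P_1$ and $P_2^w$ contain $T$ (since $w$ normalizes $T$), so $H = P_1 \cap P_2^w$ contains and is normalized by $T$. Accordingly $H$ is generated by $T$ together with the root subgroups $U_\alpha$ it contains, and splitting these roots into roots of $L_R$ and roots of $U_R$ yields a semidirect product decomposition $H = K \ltimes (U_R \cap H)$. Hence the projection $R \to L_R = R/U_R$ sends $H$ onto $K$, the map $R/H \to L_R/K$ is well-defined, and the fiber over $eK$ is $U_R/(U_R \cap H)$. Since $U_R \cap H = (U_R \cap P_1) \cap (U_R \cap P_2^w)$, the diagonal $u \mapsto (u,u)$ descends to an injection $U_R/(U_R \cap H) \hookrightarrow U_R/(U_R \cap P_1) \times U_R/(U_R \cap P_2^w)$.

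For (iii), the argument is root-theoretic. The roots of $U_R$ are by definition the $\alpha$ with $\scal{\chi_{P_1}, \alpha} + \scal{w\chi_{P_2}, \alpha} > 0$. The cominuscule hypothesis on $P_1$ gives $\scal{\chi_{P_1},\alpha} \leq 1$, and integrality of the pairing then forces $\scal{w\chi_{P_2},\alpha} \geq 0$, so $\alpha$ is also a root of $P_2^w$. Thus $U_R \subseteq P_2^w$ and the second factor is trivial. The step that requires genuine care is the semidirect decomposition in (ii): an arbitrary subgroup of $R = L_R \ltimes U_R$ need not split this way, and the argument rests entirely on the $T$-stability of $H$.
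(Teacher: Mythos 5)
Your proof is correct and follows essentially the same route as the paper's. For (i) you make the openness explicit via a dimension count rather than just invoking oppositeness, for (ii) you spell out the $T$-stability argument that the paper leaves as "clear by construction," and for (iii) your direct argument (cominuscule forces $\scal{\chi_{P_1},\alpha}\leq 1$, hence $\scal{w\chi_{P_2},\alpha}\geq 0$ by integrality) is the contrapositive of the paper's contradiction argument -- the same root-theoretic content.
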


\begin{proo}
(\i) Follows from the fact that $Q_1$ and $Q_2$ are opposite.

(\i\i) The statement on the fiber is clear by construction. The claimed embedding is induced by the diagonal embedding $U_R\to U_R\times U_R$.

(\i\i\i) The group $U_R$ is spanned by the groups $U_\a$ for $\a$ a root with $\scal{\chi_R , \a} > 0$ while the group $U_R\cap P_2^w$ is spanned by the groups $U_\a$ for $\a$ a root with $\scal{\chi_R , \a} > 0$ and $\scal{\chi_{P_2^w} , \a} \geq 0$.

Let $\a$ be a root such that $\scal{\chi_R , \a} > 0$ and $\scal{_{P_2^w} , \a} < 0$. Recall that $\chi_R = \chi_{P_1} + \chi_{P_2^w}$ therefore we must have $\scal{\chi_{P_1} , \a} > - \scal{\chi_{P_2^w} , \a} > 0$ and in particular $\scal{\chi_{P_1} , \a} > 1$. A contradiction with the assumption ${P_1}$ cominuscule.
\end{proo}

\begin{coro}
\label{cor-ind}
For $P_1$ and $P_2$ cominuscule, $G/H$ is isomorphic to $G\times^R L_R/K$.
\end{coro}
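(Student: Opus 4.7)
The plan is to show that under the cominuscule hypothesis on both $P_1$ and $P_2$, the $R$-equivariant morphism $R/H \to L_R/K$ introduced before Lemma~\ref{prop-triv} is an isomorphism; then combining this with the already observed fibration $G/H \simeq G\times^R R/H$ yields the desired $G/H\simeq G\times^R L_R/K$. Since $R = L_R \ltimes U_R$ and $K = L_R\cap H$, the map $R/H\to L_R/K$ is an isomorphism as soon as $U_R\subseteq H$, equivalently as soon as the fiber $U_R/(U_R\cap H)$ described in Lemma~\ref{prop-triv}(ii) is a point.

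For this, I would use the embedding
\[ U_R/(U_R\cap H) \hookrightarrow U_R/(U_R\cap P_1) \times U_R/(U_R\cap P_2^w) \]
from Lemma~\ref{prop-triv}(ii), and argue that both factors on the right are trivial. That the second factor is trivial is exactly Lemma~\ref{prop-triv}(iii), applied under the cominuscule hypothesis on $P_1$. For the first factor, I would apply the symmetric version of the same argument, swapping the roles of $P_1$ and $P_2^w$: if $\alpha$ is a root with $\sca{\chi_R}{\alpha}>0$ and $\sca{\chi_{P_1}}{\alpha}<0$, then from $\chi_R = \chi_{P_1}+\chi_{P_2^w}$ one deduces $\sca{\chi_{P_2^w}}{\alpha}>1$, contradicting cominusculity of $P_2^w$.

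The only point needing a short verification is that $P_2^w$ is cominuscule whenever $P_2$ is: this is immediate since $\chi_{P_2^w}=w(\chi_{P_2})$ and $\sca{w(\chi_{P_2})}{\alpha}=\sca{\chi_{P_2}}{w^{-1}\alpha}$, and $w^{-1}$ permutes the roots. Hence $U_R\cap P_1 = U_R = U_R\cap P_2^w$, so $U_R\cap H = U_R\cap P_1\cap P_2^w = U_R$, the fiber $U_R/(U_R\cap H)$ is trivial, and $R/H\to L_R/K$ is an isomorphism. There is no real obstacle here beyond recognizing that the proof of Lemma~\ref{prop-triv}(iii) is symmetric in $P_1$ and $P_2^w$; the cominuscule assumption enters exactly twice, once per factor of the embedding.
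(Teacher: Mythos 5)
Your proof is correct and is exactly the argument the paper leaves implicit after Lemma~\ref{prop-triv}: kill the second factor $U_R/(U_R\cap P_2^w)$ by Lemma~\ref{prop-triv}(iii) (cominusculity of $P_1$), kill the first factor $U_R/(U_R\cap P_1)$ by the symmetric argument (cominusculity of $P_2^w$, equivalently $P_2$), and conclude that $U_R\subseteq H$ so that $R/H\to L_R/K$ is an isomorphism. The short verification that $P_2^w$ is cominuscule whenever $P_2$ is, via $\chi_{P_2^w}=w(\chi_{P_2})$ and $W$-invariance of the pairing with roots, is a worthwhile detail to record.
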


\begin{rema}
If $P_1$ and $P_2$ are cominuscule, the $G$-orbit $G/H$ is therefore obtained by parabolic induction from $L_R/K$ (see Definition \ref{defi-ind}) that is to say form the case of an opposite pair of parabolic subgroups. 
\end{rema}

\section{Minimal orbits for the weak order}
\label{sec-min}

Let $G$ be a reductive group and $B$ a Borel subgroup. Recall that a $G$-spherical variety, or simply a spherical variety $X$ is a normal $G$-variety with a dense $B$-orbit. This in particular implies that the set $B(X)$ of $B$-orbits is finite. 

In this section we first recall general results on $B$-orbits in a spherical variety $X$. We then apply these results to the case where $X = G/P_1 \times G/P_2$ with $P_1$ and $P_2$ cominuscule parabolic subgroups.

\subsection{Weak order}
\label{subsection-weak}

Let $X$ be a spherical variety and let ${\mathcal{O}}$ be a $B$-orbit in
$X$. There is a natural partial order, called the weak order on the set $B(X)$ of $B$-orbits in $X$ defined as follows. Recall that a minimal parabolic subgroup is a parabolic subgroup with semisimple rank one.

\begin{defi}
\label{defi-weak-order}
Let ${\mathcal{O}}$ be a $B$-orbit in $X$.

(\i) If $P$ is a minimal parabolic subgroup containing $B$ such that
  ${\mathcal{O}}$ is not $P$-stable, we say that $P$ \emph{raises} ${\mathcal{O}}$. 

(\i\i) The \emph{weak order} is the order generated by the following cover relations ${\mathcal{O}}<\mathcal{O'}$ where ${\mathcal{O}}$ is any $B$-orbit in $X$ and where $\mathcal{O'}$ is the dense $B$-orbit in $P{\mathcal{O}}$ for $P$ a minimal parabolic raising ${\mathcal{O}}$. 
\end{defi} 

By results of  \cite{springer?} or \cite{brion1} three cases can occur. Recall that there exists a morphism $P\times^B\mathcal{O}\to P\mathcal{O}$ induced by the action. Recall also that the rank $\rank(Z)$ of a $B$-variety $Z$ is the minimal codimension of $U$-orbits with $U$ the unipotent radical of $B$.

\begin{lemm}
\label{lemm-edges}
Let ${\mathcal{O}}$ be a $B$-orbit in $X$ and let $P$ be a minimal parabolic subgroup raising ${\mathcal{O}}$. Let $\mathcal{O'}$ be the dense $B$-orbit in $P \mathcal{O}$. Then $\dim \mathcal{O}' = \dim \mathcal{O} + 1$ and one of the following three cases occurs:

\emph{(U)} The $P$-orbit $P{\mathcal{O}}$ contains two $B$-orbits ${\mathcal{O}}$ and ${\mathcal{O}}'$ and $P\times^B{\mathcal{O}}\to P\mathcal{O}$ is birational. We have $\rank({\mathcal{O}}')=\rank({\mathcal{O}})$.

\emph{(N)} The $P$-orbit $P{\mathcal{O}}$ contains two $B$-orbits ${\mathcal{O}}$ and ${\mathcal{O}}'$ and $P\times^B{\mathcal{O}}\to P\mathcal{O}$ is of degree~2. We have $\rank({\mathcal{O}}')=\rank({\mathcal{O}})+1$.

\emph{(T)} The $P$-orbit $P{\mathcal{O}}$ contains three $B$-orbits ${\mathcal{O}}$, ${\mathcal{O}}'$ and ${\mathcal{O}}''$ and $P\times^B{\mathcal{O}}\to P\mathcal{O}$ is birational. We have $\dim{\mathcal{O}} = \dim\mathcal{O}''$ and $\rank({\mathcal{O}}')=\rank({\mathcal{O}})+1=\rank({\mathcal{O}}'')+1$.
\end{lemm}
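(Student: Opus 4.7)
The plan is to analyze the $P$-equivariant multiplication map $\mu : P \times^B \mathcal{O} \to P\mathcal{O}$. The source is a $\mathbb{P}^1$-bundle over $\mathcal{O}$ via $[p,x] \mapsto pB \in P/B \cong \mathbb{P}^1$, hence irreducible of dimension $\dim \mathcal{O} + 1$. Since $\mu$ is surjective and the hypothesis $\mathcal{O} \subsetneq P\mathcal{O}$ forces $\dim P\mathcal{O} > \dim \mathcal{O}$, one obtains $\dim P\mathcal{O} = \dim \mathcal{O}' = \dim \mathcal{O} + 1$, and in particular $\mu$ is generically finite.

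Next I would reduce the local structure to a rank-one situation. Let $L$ be the Levi factor of $P$ containing $T$; its derived group $L^{ss}$ is semisimple of rank one, so isomorphic to $\mathrm{SL}_2$ or $\mathrm{PGL}_2$. Picking a generic point $y \in \mathcal{O}'$ and pulling back along the projection $P \times^B \mathcal{O} \to P/B \cong \mathbb{P}^1$, the finite fiber $\mu^{-1}(y)$ is controlled by the $L^{ss}$-action on a transversal slice to $\mathcal{O}$ at a nearby point of $\mathcal{O}$. The $B$-orbits in $P\mathcal{O}$ lying above $\mathcal{O}' \setminus \mathcal{O}$ are then in bijection with the $(B\cap L^{ss})$-orbits on $L^{ss}/H$ for a proper closed subgroup $H \subsetneq L^{ss}$ (proper because $P$ raises $\mathcal{O}$).

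The trichotomy is now forced by the classification of closed proper subgroups of $\mathrm{SL}_2$ of positive dimension, up to conjugacy: a Borel, a maximal torus, or the normalizer of a maximal torus. If $H$ contains the unipotent radical of a Borel, then $L^{ss}/H \cong \mathbb{P}^1$ has exactly two $B$-orbits, $\mu$ is birational, and the extra direction added to $\mathcal{O}$ is unipotent so the rank is unchanged, giving case (U). If $H$ is conjugate to a maximal torus, then $L^{ss}/H$ is two-dimensional with three $(B\cap L^{ss})$-orbits; $\mu$ is birational, the extra direction is toric so the rank goes up by one, and the additional closed orbit $\mathcal{O}''$ has the same dimension as $\mathcal{O}$ because it sits as the ``other'' section of $L^{ss}/T \to L^{ss}/B$, yielding case (T). If $H$ is conjugate to the normalizer of a torus, then $L^{ss}/H$ has only two $(B\cap L^{ss})$-orbits but admits a degree-two cover by $L^{ss}/T$, so $\mu$ has degree two and the rank again jumps by one, which is case (N).

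The main obstacle is the passage between the global setting and the local rank-one model: one must check that the transversal slice is well-defined along a dense open subset of $\mathcal{O}$ and that the local $H$-isotropy is generically constant. This is where $B$-sphericity of $X$ enters crucially, since it guarantees that generic $B$-stabilizers along $\mathcal{O}$ are constant and that the slice analysis genuinely captures the $B$-orbit combinatorics of $P\mathcal{O}$; this is exactly the content of the slice constructions in \cite{brion1}, which I would invoke rather than redo.
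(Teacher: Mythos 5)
The paper does not prove Lemma~\ref{lemm-edges}; it is quoted verbatim from \cite{springer?} and \cite{brion1}, where the argument is exactly the reduction to the rank-one Levi that you describe. So your approach is the standard one. However, as a proof it has a genuine gap.

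The key error is in the sentence ``the classification of closed proper subgroups of $\mathrm{SL}_2$ of positive dimension, up to conjugacy: a Borel, a maximal torus, or the normalizer of a maximal torus.'' This list is incomplete: the unipotent radical $U$ of a Borel (and its extensions by finite subgroups of $T$) is also a positive-dimensional proper closed subgroup, and this case genuinely occurs -- already for $G=P=\mathrm{SL}_2$ acting on $X=\mathrm{SL}_2/U\cong\mathbb{A}^2\setminus\{0\}$, where the image of the stabilizer of a point of the small $B$-orbit in $\mathrm{PGL}_2$ is exactly $U$. Consequently your next assertion ``If $H$ contains the unipotent radical of a Borel, then $L^{ss}/H\cong\mathbb{P}^1$'' is false when $H$ is $U$ itself, since then $L^{ss}/H$ is $\mathbb{A}^2\setminus\{0\}$, which is $2$-dimensional, not $\mathbb{P}^1$. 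The case $H=U$ does end up in type (U) -- the quotient still has two $B$-orbits, the multiplication map is still birational, and the rank is unchanged -- but your argument does not show this; it simply omits the case. A cleaner packaging that avoids the slip is to observe that $B$-orbits in $P\mathcal{O}\cong P/P_x$ correspond to $P_x$-orbits on $P/B\cong\mathbb{P}^1$, so everything reduces to the orbit structure of the image $\bar P_x\subset\mathrm{Aut}(\mathbb{P}^1)\cong\mathrm{PGL}_2$; the relevant positive-dimensional cases are then $\bar P_x$ containing a full unipotent (one closed orbit, type U), $\bar P_x^\circ$ a torus not normalized by a Weyl element in $\bar P_x$ (two closed fixed points swapped by no element of $\bar P_x$, type T), and $\bar P_x$ containing the normalizer of a torus (a single two-point closed orbit, type N); this is the criterion the paper itself quotes in Section~\ref{sec-exa}. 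Two smaller issues: the phrase ``$B$-orbits in $P\mathcal{O}$ lying above $\mathcal{O}'\setminus\mathcal{O}$'' is not well posed ($\mathcal{O}'$ and $\mathcal{O}$ are disjoint $B$-orbits), and the rank assertions in each case are asserted (``the extra direction is unipotent/toric'') rather than derived; you should say a word about why $\mathrm{rk}$ is computed by the same rank-one local model.
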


\begin{defi}
We define a graph $\Gamma(X)$ whose vertices are the elements in $B(X)$ and whose edges are the pairs $({\mathcal{O}},{\mathcal{O}}')$ with ${\mathcal{O}}$ raised to ${\mathcal{O}}'$ by a minimal parabolic subgroup $P$. We say that an edge is of type U, N or T if we are in the corresponding U, N or T situation of the previous lemma.
\end{defi}

Let $R$ be a parabolic subgroup of $G$ and let $L_R$ be its Levi quotient. Let $Y$ be a $L_R$-variety. We write $B_{L_R}$ for the image of $B \cap R$ in $L_R$. Note that this is a Borel subgroup of $L_R$.

\begin{defi}
\label{defi-ind}
We say that a $G$-variety $X$ is obtained from $Y$ by \emph{parabolic induction} if of the form $X = G \times^R Y$ where $Y$ is a $L$-variety 
\end{defi}

The following result is a direct application of \cite[Lemma 6]{brion1}. 

\begin{lemm}
\label{lemm-ind}
Let $X$ be a $G$-variety obtained by parabolic induction from $Y$. 

(\i) The variety $X$ is $G$-spherical if and only if $Y$ is $L_R$-spherical.

Assume that $X$ is spherical. 

(\i\i) The set $B(X)$ is in bijection with the product $B_{L_R}(Y)\times B(G/R)$. The bijection $B_{L_R}(Y) \times B(G/R) \to B(X)$ is given by $({\mathcal{O}},B g R/R) \mapsto B g R \times^R {\mathcal{O}}$. Furthermore, the edges are of two types: 
\begin{itemize}
\item either of the form $(({\mathcal{O}},B g R/R),({\mathcal{O}},B g' R/R))$ with $(B g R/R,B g' R/R)$ an edge of $B(G/R)$. These edges are of type \emph{U};
\item or of the form $(({\mathcal{O}},B g R/R),({\mathcal{O}}',B g R/R))$ with $({\mathcal{O}},{\mathcal{O}}')$ an edge of $B_{L_R}(Y)$. The edges  $(({\mathcal{O}},B g R/R),({\mathcal{O}}',B g R/R))$ and $({\mathcal{O}},{\mathcal{O}}')$ have the same type.
\end{itemize}
\end{lemm}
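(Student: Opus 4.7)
The plan is to analyze the $B$-orbit structure of $X = G\times^R Y$ via the $G$-equivariant projection $\pi\colon X\to G/R$, whose fibers are isomorphic to $Y$ with the action of $R$ through its quotient $L_R$.

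For part (i), I would use the Bruhat decomposition $G/R = \bigsqcup_{w\in W^R} BwR/R$, in particular the fact that $BR/R$ is the open $B$-orbit in $G/R$. The preimage $\pi^{-1}(BR/R) = BR\times^R Y$ is an open $B$-stable subset of $X$, and as a $B$-variety it is isomorphic to $B\times^{B\cap R} Y$. Since $U_R\subseteq B\cap R$ acts trivially on $Y$ (the $R$-action factors through $L_R$), the action of $B\cap R$ on $Y$ factors through $B_{L_R} = (B\cap R)/U_R$, which is by definition a Borel subgroup of $L_R$. Hence $X$ has a dense $B$-orbit if and only if $Y$ has a dense $B_{L_R}$-orbit, proving (i).

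For part (ii), I would refine the previous analysis over each piece of the Bruhat stratification. For $w\in W^R$, the subvariety $X_w := \pi^{-1}(BwR/R) = BwR\times^R Y$ is $B$-stable, and by the same reasoning as above, its $B$-orbits are in bijection with $B_w$-orbits on $Y$, where $B_w$ is the image in $L_R$ of $(w^{-1}Bw)\cap R$. Since $(w^{-1}Bw)\cap R$ is a Borel subgroup of $R$, its image $B_w$ is a Borel subgroup of $L_R$, hence $L_R$-conjugate to $B_{L_R}$, giving a canonical bijection $B_w(Y)\simeq B_{L_R}(Y)$. Putting these together yields the claimed bijection $B_{L_R}(Y)\times B(G/R)\to B(X)$, and the explicit formula $(\mathcal{O},BgR/R)\mapsto BgR\times^R\mathcal{O}$ follows by construction.

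For the description of the edges, I would consider a minimal parabolic $P\supseteq B$ raising a $B$-orbit $\widetilde{\mathcal{O}}:=BgR\times^R\mathcal{O}$ and distinguish two cases according to whether the corresponding simple root lies in $R$. If $P\subseteq R$, then $P$ preserves the fibers of $\pi$, and the question reduces, after trivializing the bundle over a $B$-orbit in $G/R$, to the weak-order relation of $\mathcal{O}$ inside $Y$ under the corresponding minimal parabolic of $L_R$; the map $P\times^B\widetilde{\mathcal{O}}\to P\widetilde{\mathcal{O}}$ factors as the product of the identity on the base with the analogous map inside $Y$, so the type U/N/T is preserved. If $P\not\subseteq R$, then $P$ moves the base $B$-orbit $BgR/R$ to a new orbit $Bg'R/R$ inside $G/R$; since $G/R$ is a flag variety the map $P\times^B BgR/R\to PBgR/R$ is a Bott--Samelson--type birational map (so rank is preserved), and this property lifts to $P\times^B\widetilde{\mathcal{O}}\to P\widetilde{\mathcal{O}}$, giving an edge of type U between $\widetilde{\mathcal{O}}$ and $Bg'R\times^R\mathcal{O}$. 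The main obstacle, as always in such "parabolic induction" arguments, is to track the identifications of $B_w(Y)$ with $B_{L_R}(Y)$ coherently as $w$ varies and to verify that no mixed-type edge (changing both base and fiber orbits simultaneously) can occur; this relies on the fact that $P/B\simeq\mathbb{P}^1$ and on a dimension count showing that the raised orbit $\widetilde{\mathcal{O}}'$ has dimension exactly $\dim\widetilde{\mathcal{O}}+1$, as recorded in Lemma~\ref{lemm-edges}.
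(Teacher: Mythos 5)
The paper itself does not write out a proof: it disposes of this lemma with the single sentence ``The following result is a direct application of \cite[Lemma 6]{brion1}.'' So your from-scratch argument is a genuinely different (and more self-contained) route, and it is worth assessing on its own. Your treatment of part (i) and of the bijection in (ii) is essentially right: the Bruhat stratification of $G/R$, the trivialization $\pi^{-1}(BwR/R)\simeq B\times^{B\cap n_wRn_w^{-1}}Y$, and the identification of the fiber-wise Borel with $B_{L_R}$ (for $w\in W^R$ one in fact gets equality $B_w=B_{L_R}$, not merely conjugacy, because $w^{-1}$ sends positive roots of $L_R$ to positive roots of $G$; using equality removes the awkwardness of a ``canonical'' bijection obtained via conjugation).

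However, your case analysis for the edges has a real gap. You distinguish cases by whether the minimal parabolic $P_\alpha$ satisfies $P_\alpha\subseteq R$, claiming that in that case ``$P$ preserves the fibers of $\pi$''. This is false. The correct criterion depends on $w$ as well as $\alpha$: for $\widetilde{\mathcal O}$ lying over the cell $BwR/R$ with $w\in W^R$, the parabolic $P_\alpha$ preserves the base cell if and only if $w^{-1}(\alpha)$ is a root of $L_R$ (equivalently, a \emph{simple} root of $L_R$; one checks this using $\ell(s_\alpha w)=\ell(w)+\ell(s_{w^{-1}\alpha})$ for $w\in W^R$). When $w^{-1}(\alpha)>0$ is \emph{not} a root of $L_R$, the parabolic $P_\alpha$ moves the base cell to $Bs_\alpha wR/R$ even if $\alpha$ is a simple root of $R$. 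Concretely, take $G=SL_3$, let $R$ be the standard parabolic with $\Delta_R=\{\alpha_1\}$, take $w=s_2\in W^R$ and $\alpha=\alpha_1$. Then $P_{\alpha_1}\subseteq R$, yet $w^{-1}(\alpha_1)=s_2(\alpha_1)=\alpha_1+\alpha_2$ is not a root of $L_R$, and $P_{\alpha_1}$ raises $Bs_2R/R$ to $Bs_1s_2R/R$ in $G/R$; so it does not preserve the fibers of $\pi$. Conversely, if $\alpha\notin\Delta_R$ it need not move the base (for instance when $BwR/R$ is already the open cell). So the dichotomy must be reformulated in terms of $w^{-1}(\alpha)$: if $w^{-1}(\alpha)$ is a simple root of $L_R$, the raising is ``fiber type'' and corresponds to a raising of $\mathcal{O}$ in $Y$ by the minimal parabolic of $L_R$ attached to $w^{-1}(\alpha)$, with the same edge type; if $w^{-1}(\alpha)>0$ is not a root of $L_R$, the raising is ``base type'' (type U, with the fiber orbit unchanged); if $w^{-1}(\alpha)<0$, one checks that $P_\alpha$ does not raise $\widetilde{\mathcal O}$ at all. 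With that correction, the rest of your sketch (birationality on the base, preservation of the fiber orbit, and the non-occurrence of mixed-type edges, which you rightly flagged as needing care) goes through.
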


Let $P_1$ and $P_2$ be cominuscule parabolic subgroups and let $X = G/P_1 \times G/P_2$. The following result was proved in \cite{littelmann} (see also \cite{stembridge} for a complete classification of products of projective homogeneous $G$-varieties which are $G$-spherical).

\begin{prop}
The variety $X$ is $G$-spherical.
\end{prop}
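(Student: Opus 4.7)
The plan is to use the parabolic induction of Section~\ref{section-g-orb} to reduce to the sphericality of the individual $G$-orbits, and then to recognize each such orbit as (induced from) a symmetric homogeneous space.

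Since $X$ has only finitely many $G$-orbits, each of the form $G/H$ with $H = P_1\cap P_2^w$, in order to show that $X$ is $G$-spherical it suffices to show that each such orbit is $G$-spherical. By Corollary~\ref{cor-ind} (which is where the cominuscule hypothesis enters), any such orbit is isomorphic to $G\times^R L_R/K$, where $(Q_1,Q_2)$ is the opposite pair in $L_R$ produced by Lemma~\ref{lemm-LK} and $K = Q_1\cap Q_2$ is the common Levi. By Lemma~\ref{lemm-ind}(\i) the question reduces further to showing that $L_R/K$ is $L_R$-spherical.

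I would then exhibit $L_R/K$ as a symmetric homogeneous space. Set $s = \chi_{P_1}(-1)\in T$ and let $\theta = \mathrm{Ad}(s)$ be the corresponding inner involution of $L_R$. For a root $\alpha$ of $L_R$, i.e.\ a root of $G$ with $\scal{\chi_R,\alpha}=0$, the differential $d\theta$ acts on $\g_\alpha$ by $(-1)^{\scal{\chi_{P_1},\alpha}}$, which by the cominuscule assumption already lies in $\{\pm 1\}$, with value $+1$ precisely when $\scal{\chi_{P_1},\alpha}=0$. Using the relation $\chi_R = \chi_{P_1} + w(\chi_{P_2})$, the constraint $\scal{\chi_R,\alpha}=0$ forces $\scal{\chi_{P_1},\alpha}$ and $\scal{w(\chi_{P_2}),\alpha}$ to be opposite; combined with cominuscule, the defining conditions
$\scal{\chi_{P_1},\alpha}\geq 0$ and $\scal{w(\chi_{P_2}),\alpha}\geq 0$ of $\mathrm{Lie}(K) = \mathrm{Lie}(L_R)\cap \mathrm{Lie}(P_1)\cap \mathrm{Lie}(P_2^w)$ then collapse to the single condition $\scal{\chi_{P_1},\alpha}=0$, which is precisely the $+1$-eigenspace condition for $d\theta$. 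Therefore $\mathrm{Lie}(K) = \mathrm{Lie}(L_R)^{d\theta}$, so $K$ is the identity component of $L_R^\theta$ and $(L_R,K)$ is a symmetric pair.

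The conclusion then follows from the classical theorem (due to Vust) that every symmetric homogeneous space of a reductive group is spherical. The main, and essentially only, non-formal step is the eigenspace computation in the previous paragraph; this is exactly where the cominuscule hypothesis on $P_1$ and $P_2$ is indispensable, since otherwise the eigenvalues of $d\theta$ on $\mathrm{Lie}(L_R)$ would not all lie in $\{\pm 1\}$, the $+1$-eigenspace would strictly contain $\mathrm{Lie}(K)$, and the reduction to a symmetric pair would break down.
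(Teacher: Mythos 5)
Your proof is correct, but it takes a genuinely different route from the paper: the paper does not prove this Proposition at all, it simply cites Littelmann \cite{littelmann} (with Stembridge \cite{stembridge} for the full classification of spherical products of flag varieties). You instead assemble a direct proof from the paper's own machinery: Corollary~\ref{cor-ind} and Lemma~\ref{lemm-ind}(\i) reduce to $L_R/K$ with $(Q_1,Q_2)$ an opposite pair, and then you recognize $L_R/K$ as a symmetric space via $\theta=\mathrm{Ad}(\chi_{P_1}(-1))$ and invoke Vust. That last recognition is in fact the very observation the paper makes later, in the proof of Lemma~\ref{lemm-closed} (citing \cite[Proposition 3.5]{wahl}), so you have effectively noticed that the sphericality claim, which the paper outsources, already follows from Section~\ref{section-g-orb} together with the symmetric-space structure. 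What the citation buys: Littelmann and Stembridge treat the non-cominuscule cases too, so the Proposition is a small special case of a general classification. What your proof buys: a self-contained, conceptual explanation of sphericality in exactly the case the paper needs, making explicit the role of the symmetric pair that drives everything in Section~\ref{sec-min}. One small imprecision in your closing remark: $\theta=\mathrm{Ad}(\chi_{P_1}(-1))$ is always an involution, so the eigenvalues of $d\theta$ always lie in $\{\pm1\}$ regardless of cominuscularity. The correct failure mode without the cominuscule hypothesis is the one you state next: pairings $\scal{\chi_{P_1},\alpha}=\pm 2$ become possible, the corresponding root spaces land in the $+1$-eigenspace without lying in $\mathrm{Lie}(K)$, and the identity $\mathrm{Lie}(K)=\mathrm{Lie}(L_R)^{d\theta}$ breaks. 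The argument itself is unaffected.
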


Consider a $G$-orbit $G \cdot ( P_1 , w P_2) \simeq G/H$ of $X$ with $w \in W$ and $H=P_1\cap P_2^w$ and recall the notation from Section \ref{section-g-orb}. Corollary \ref{cor-ind} gives the isomorphism 
$$G/H\simeq G\times^RL_R/K.$$ 
In particular, by Lemma \ref{lemm-ind}, to describe the weak order on $G/H$ we only need to study the weak order on $L_R/K$. Thanks to Lemma \ref{lemm-LK}, it is therefore enough to consider the case where $(P_1,P_2)$ is an opposite pair and $w$ is the longest element.

\subsection{Minimal orbits: The case of opposite pairs}

In this subsection, we consider the spherical variety $X = G/P_1 \times G/P_2$ with $P_1$ and $P_2$ two cominuscule parabolic subgroups of $G$ such that $(P_1,P_2)$ is an opposite pair. We pick the dense $G$-orbit in $X$ \emph{i.e.} the orbit $G \cdot ( P_1 , w P_2) \simeq G/H$ with $H=P_1\cap P_2^w$ and $w = w_0$ the longest element of $W$. 

We start with results on minimal length representatives: for $P$ a parabolic subgroup of $G$ containing $B$, we write $W_P$ for its Weyl group and $W^P$ for the subset of $W$ of minimal length representatives of the quotient $W/W_P$.

\begin{lemm}
Let $w_{P_1}$ and $w_{P_2}$ be the longest elements in $W^{P_1}$ and
$W^{P_2}$, then $w_{P_2}=w_{P_1}^{-1}$.
\end{lemm}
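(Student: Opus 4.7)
The plan is to reduce the statement to a short manipulation in the Weyl group, using the standard fact that if $w_0^P$ denotes the longest element of $W_P$, then the longest element of $W^P$ is $w_0 w_0^P$ (this comes from the unique length-additive factorization $w_0 = w_{P} \cdot w_0^P$ with $w_P \in W^P$).

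First I would translate the opposite-pair condition into an equality of Weyl subgroups. By definition of opposite pair, $w_0(\chi_{P_1}) = -\chi_{P_2}$, and the construction $\chi \mapsto P_\chi$ is equivariant under the action of $W$ by conjugation, so
\[
w_0 P_1 w_0^{-1} = P_{w_0(\chi_{P_1})} = P_{-\chi_{P_2}} = P_2^-,
\]
the parabolic opposite to $P_2$. Since $P_2$ and $P_2^-$ share the common Levi $L_R$ (for the choice of $T$), their Weyl groups coincide, and therefore
\[
W_{P_2} = W_{P_2^-} = w_0 W_{P_1} w_0^{-1}.
\]
In particular, conjugation by $w_0$ carries the longest element $w_0^{P_1}$ of $W_{P_1}$ to the longest element $w_0^{P_2}$ of $W_{P_2}$, i.e.\ $w_0^{P_2} = w_0\, w_0^{P_1}\, w_0^{-1}$.

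Then the conclusion is a direct computation. Using $w_{P_i} = w_0\, w_0^{P_i}$ for $i=1,2$ and the fact that longest elements are involutions,
\[
w_{P_2} \;=\; w_0\, w_0^{P_2} \;=\; w_0 \cdot w_0\, w_0^{P_1}\, w_0^{-1} \;=\; w_0^{P_1}\, w_0^{-1},
\]
while
\[
w_{P_1}^{-1} \;=\; (w_0\, w_0^{P_1})^{-1} \;=\; (w_0^{P_1})^{-1}\, w_0^{-1} \;=\; w_0^{P_1}\, w_0^{-1}.
\]
Comparing gives $w_{P_2} = w_{P_1}^{-1}$, which is the claim.

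There is no serious obstacle; the only point requiring care is the first step, namely identifying $w_0 P_1 w_0^{-1}$ with $P_2^-$ rather than with $P_2$, and then exploiting the fact that a parabolic and its opposite have the same Weyl subgroup. Once that identification is in place, the rest is a routine manipulation of longest elements.
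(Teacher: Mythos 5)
Your proof is correct, but it takes a genuinely different route from the paper. The paper argues by uniqueness of minimal-length coset representatives: it observes that $w_{P_1}^{-1}$ and $w_{P_2}$ have the same length (both equal $\dim G/P_1 = \dim G/P_2$), and that both send $\chi_{P_2}$ to $-\chi_{P_1}$ (using $w_{P_1}(\chi_{P_1}) = w_0(\chi_{P_1}) = -\chi_{P_2}$, since $W_{P_1}$ fixes $\chi_{P_1}$), hence they represent the same coset in $W/W_{P_2}$; since $w_{P_2}$ is the unique minimal-length element of that coset and $w_{P_1}^{-1}$ has the same length, they must coincide. You instead compute both sides explicitly from the factorization $w_{P_i} = w_0 w_0^{P_i}$, after first showing that conjugation by $w_0$ sends $W_{P_1}$ to $W_{P_2}$ and hence (being the length-preserving diagram automorphism $-w_0$) sends $w_0^{P_1}$ to $w_0^{P_2}$. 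The paper's argument is a bit more economical since it never needs to invoke the diagram-automorphism property of conjugation by $w_0$; yours is more explicit and yields the closed-form identity $w_{P_2} = w_0^{P_1} w_0 = w_{P_1}^{-1}$, which some readers may find more transparent. The only point worth flagging in your write-up is that the step ``conjugation by $w_0$ carries the longest element of $W_{P_1}$ to the longest element of $W_{P_2}$'' silently uses that conjugation by $w_0$ is length-preserving; this is standard (it realizes the diagram automorphism $\alpha \mapsto -w_0(\alpha)$), but it deserves a word since it is what lets you match up longest elements rather than arbitrary elements of equal coset size.
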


\begin{proo}
The length of $w_{P_1}$ and $w_{P_2}$ are equal to the dimensions of
$G/P_1$ and $G/P_2$. Since $(P_1,P_2)$ is an opposite pair, these
dimensions are equal and 
$w_{P_1}(\chi_{P_1})=-\chi_{P_2}$. Thus
$l(w_{P_1}^{-1})=l(w_{P_2})$ and we compute
$w_{P_1}^{-1}(\chi_{P_2})= - \chi_{P_1} = {w_{P_2}}(\chi_{P_2}).$
Therefore $w_{P_1}^{-1}$ is in the same class as $w_{P_2}$ in
$W/W_{P_2}$ proving the result.
\end{proo}

\begin{lemm}
\label{lemm-dual}
 Let $u\in W^{P_1}$, there exists a unique $u^\vee\in W^{P_2}$ such that $(uP_1,u^\vee P_2)$ is in the dense $G$-orbit in $G/P_1\times G/P_2^w$. We have the formulas
$$u^{-1}u^\vee=w_{P_2} \textrm{ and } l(u)+l(u^\vee)=l(w_{P_2}).$$
where $w_{P_2}$ is the longest element in $W^{P_2}$.
\end{lemm}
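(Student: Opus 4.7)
The plan is to set $u^\vee := u w_{P_2}$ and verify each of the four assertions of the lemma in turn: (a) $(uP_1, u^\vee P_2)$ lies in the dense $G$-orbit; (b) $u^\vee$ is actually in $W^{P_2}$; (c) the length identity $l(u)+l(u^\vee) = l(w_{P_2})$ holds; (d) $u^\vee$ is unique.

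Assertion (a) follows from the general parametrization of $G$-orbits in $G/P_1\times G/P_2$ by double cosets $W_{P_1}\backslash W/W_{P_2}$: the orbit of $(uP_1, vP_2)$ is indexed by the class of $u^{-1}v$, and since $w_0 = w_{P_2}\cdot w_{0,P_2}$ with $w_{0,P_2}\in W_{P_2}$, the dense orbit corresponds to $W_{P_1}w_{P_2}W_{P_2}$, in which $u^{-1}u^\vee = w_{P_2}$ clearly lies. For (b), I would establish the identity $w_{P_2}(\Delta_{P_2}) = \Delta_{P_1}$: writing $w_{P_2} = w_0 w_{0,P_2}$, the element $w_{0,P_2}$ sends each $\alpha\in\Delta_{P_2}$ to $-\sigma_2(\alpha)\in -\Delta_{P_2}$ for $\sigma_2$ the diagram opposition of the Levi of $P_2$, and then the opposite-pair condition $\Delta_{P_2}=-w_0(\Delta_{P_1})$ forces $-w_0(-\Delta_{P_2}) = \Delta_{P_1}$. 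Hence for $\alpha\in\Delta_{P_2}$ one has $(uw_{P_2})(\alpha) = u(w_{P_2}(\alpha))\in u(\Delta_{P_1})\subset\Phi^+$, since $u\in W^{P_1}$.

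The length formula (c) is the most delicate step of the argument. I would count the inversions $I(uw_{P_2}) = \{\alpha>0 : uw_{P_2}(\alpha)<0\}$ by splitting $\Phi^+$ as $I(w_{P_2})\sqcup(\Phi^+\setminus I(w_{P_2}))$, the second piece being the positive roots of the Levi of $P_2$. The opposite-pair assumption together with the preceding lemma give $w_{P_2}(\chi_{P_2})=w_0(\chi_{P_2})=-\chi_{P_1}$ and $w_{P_2}^{-1}=w_{P_1}$, whence $\sca{\chi_{P_1}}{-w_{P_2}(\alpha)}=\sca{\chi_{P_2}}{\alpha}$; it follows that $\alpha\mapsto -w_{P_2}(\alpha)$ sends $I(w_{P_2})$ bijectively onto $I(w_{P_1})$. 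For $\alpha\in I(w_{P_2})$, $\alpha$ is an inversion of $uw_{P_2}$ precisely when $-w_{P_2}(\alpha)\notin I(u)$; since $u\in W^{P_1}$ forces $I(u)\subseteq I(w_{P_1})$, this accounts for $|I(w_{P_1})|-|I(u)| = l(w_{P_2})-l(u)$ inversions. An entirely parallel argument shows that $w_{P_2}$ sends the positive roots of the Levi of $P_2$ into the positive roots of the Levi of $P_1$, a set disjoint from $I(u)$; so the second piece contributes nothing, and $l(u^\vee) = l(w_{P_2})-l(u)$.

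For uniqueness (d), the key is the conjugation identity $w_{P_2}^{-1} W_{P_1} w_{P_2} = W_{P_2}$, which again comes from the opposite-pair assumption: $w_0 W_{P_1} w_0^{-1} = W_{P_2}$ since $w_0$ sends $\Delta_{P_1}$ to $-\Delta_{P_2}$, and then conjugation by $w_{0,P_2}\in W_{P_2}$ preserves $W_{P_2}$. This implies $W_{P_1} w_{P_2} W_{P_2} = w_{P_2}W_{P_2}$ is a single left $W_{P_2}$-coset, so any $v\in W^{P_2}$ with $u^{-1}v$ in the dense double coset must satisfy $vW_{P_2} = uw_{P_2}W_{P_2}$, whose unique minimal-length representative in $W^{P_2}$ is $uw_{P_2}$ by (b) and (c).
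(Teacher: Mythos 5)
Your proposal is correct and reaches the same formula $u^\vee = uw_{P_2}$, but by a genuinely different route from the paper. The paper's central device is the length-additive factorization $w_{P_1} = u'\cdot u$ available for any $u\in W^{P_1}$ (a standard Coxeter fact about the longest element of $W^{P_1}$), from which the length identity is immediate and the membership $u^\vee = (u')^{-1}\in W^{P_2}$ drops out from the general fact that a length-additive right factor of an element of $W^{P_2}$ again lies in $W^{P_2}$. You instead posit $u^\vee = uw_{P_2}$ up front and verify each required property by hand: membership in $W^{P_2}$ via the conjugation identity $w_{P_2}(\Delta_{P_2}) = \Delta_{P_1}$, the length formula via a direct inversion count (splitting $\Phi^+ = I(w_{P_2})\sqcup\Phi_{P_2}^+$, using that $\alpha\mapsto -w_{P_2}(\alpha)$ carries $I(w_{P_2})$ onto $I(w_{P_1})$ and $\Phi_{P_2}^+$ into $\Phi_{P_1}^+$), and uniqueness via $W_{P_1}w_{P_2}W_{P_2} = w_{P_2}W_{P_2}$. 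The paper's argument is shorter; yours makes the root-level combinatorics explicit, and your conjugation identities $w_{P_2}(\Delta_{P_2}) = \Delta_{P_1}$ and $w_{P_2}^{-1}W_{P_1}w_{P_2}=W_{P_2}$ are independently useful structural facts about opposite pairs. One small slip: in part (b) you write ``$-w_0(-\Delta_{P_2})=\Delta_{P_1}$'', but $-w_0(-\Delta_{P_2}) = w_0(\Delta_{P_2}) = -\Delta_{P_1}$; what you actually need (and what the displayed opposite-pair relation $\Delta_{P_2}=-w_0(\Delta_{P_1})$ directly gives) is $-w_0(\Delta_{P_2}) = \Delta_{P_1}$, which makes $w_0\bigl(-\sigma_2(\alpha)\bigr)\in\Delta_{P_1}$ as desired; the conclusion is unaffected.
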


\begin{proo}
Let $v\in W$ such that $(uP_1,vP_2)$ is in the dense $G$-orbit
\emph{i.e.} have $u(\chi_{P_1})=-v(\chi_{P_2})$. Because $(P_1,P_2)$ is an opposite pair we have $w_{P_1}(\chi_{P_1}) = - \chi_{P_2}$ thus we get $w_{P_1}^{-1}(\chi_{P_2})=u^{-1}v(\chi_{P_2})$ and the equality $w_{P_1}^{-1}=u^{-1}v$ in $W/W_{P_2}$. Let $v'\in W_{P_2}$ such that the equality $w_{P_1}^{-1}=u^{-1}vv'$ holds in $W$. By the previous lemma we get $w_{P_2}=u^{-1}vv'$. 
Write $w_{P_1}=u'u$ with $l(w_{P_1})=l(u)+l(u')$ (this is possible
since $u\in W^{P_1}$). Note that the have $u'={v'}^{-1}v^{-1}$ and
therefore $l(w_{P_2})=l(u^{-1})+l(u')$ and the expression
$w_{P_2}=u^{-1}{u'}^{-1}$ is length additive. Since $w_{P_2}\in
W^{P_2}$ this implies ${u'}^{-1}\in W^{P_2}$. The element
$u^\vee={u'}^{-1}$ satisfies the conclusions of the lemma. 
\end{proo}

\begin{lemm}
\label{lemm-double}
The $B$-orbit $B\cdot(uP_1,{u}^\vee P_2)$ is a $B\times B$-orbit.
\end{lemm}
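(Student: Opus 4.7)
My plan is to prove the lemma by a dimension count. The diagonal $B$-orbit $B\cdot(uP_1,u^\vee P_2)$ is contained in the $B\times B$-orbit of the same point, which is the product of Schubert cells $BuP_1/P_1\times Bu^\vee P_2/P_2$, an irreducible variety of dimension $l(u)+l(u^\vee)=l(w_{P_2})$ by Lemma \ref{lemm-dual}. It therefore suffices to verify that $B\cdot(uP_1,u^\vee P_2)$ has the same dimension, i.e.\ that the stabilizer
\[
\Stab_B(uP_1,u^\vee P_2)=B\cap uP_1u^{-1}\cap u^\vee P_2(u^\vee)^{-1}
\]
has codimension $l(u)+l(u^\vee)$ in $B$.

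Since $uP_1u^{-1}$ and $u^\vee P_2(u^\vee)^{-1}$ are parabolic subgroups of $G$ containing $T$, this stabilizer is connected and $T$-stable, and the root system of its unipotent radical is $\Phi^+\setminus(I(u)\cup I(u^\vee))$, where $I(u)=\{\a\in\Phi^+:u^{-1}\a\notin\Phi_{P_1}\}$ and analogously for $u^\vee$. A short combinatorial check (using $u(\Phi^+_{L_1})\subset\Phi^+$ for $u\in W^{P_1}$) shows that $I(u)$ coincides with the usual inversion set of $u$, so $|I(u)|=l(u)$, and likewise $|I(u^\vee)|=l(u^\vee)$. The sought codimension is then precisely $|I(u)\cup I(u^\vee)|$, and the whole problem reduces to establishing the disjointness $I(u)\cap I(u^\vee)=\emptyset$.

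For this step I will use both hypotheses. Because $P_1, P_2$ are cominuscule, $\scal{\a,\chi_{P_i}}\in\{-1,0,1\}$ for every root $\a$, which quickly implies
\[
I(u)=\{\a\in\Phi^+:\scal{\a,u\chi_{P_1}}=-1\},\qquad I(u^\vee)=\{\a\in\Phi^+:\scal{\a,u^\vee\chi_{P_2}}=-1\}.
\]
The opposite pair condition $w_0(\chi_{P_1})=-\chi_{P_2}$, combined with the decomposition $w_0=w_{P_2}\cdot w_0^{W_{P_2}}$ and the fact that $w_0^{W_{P_2}}$ fixes $\chi_{P_2}$, gives $w_{P_2}\chi_{P_2}=-\chi_{P_1}$. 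Since $u^\vee=uw_{P_2}$ by Lemma \ref{lemm-dual}, this yields the clean identity $u^\vee\chi_{P_2}=-u\chi_{P_1}$, whence $I(u^\vee)=\{\a\in\Phi^+:\scal{\a,u\chi_{P_1}}=+1\}$, which is manifestly disjoint from $I(u)$.

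The main obstacle is exactly this identity $u^\vee\chi_{P_2}=-u\chi_{P_1}$: it bundles together the cominuscule trichotomy of pairings, the opposite pair condition, and the relation $u^{-1}u^\vee=w_{P_2}$, and without any one of these three ingredients the disjointness argument breaks down.
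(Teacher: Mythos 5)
Your proof is correct and reduces to exactly the same key claim as the paper's: that no positive root $\a$ satisfies both $u^{-1}\a\notin\Phi_{P_1}$ and $(u^\vee)^{-1}\a\notin\Phi_{P_2}$, which in turn follows from the identity $w_{P_2}(\chi_{P_2})=-\chi_{P_1}$ (equivalently, $u^\vee\chi_{P_1}=-u\chi_{P_1}$). The packaging differs slightly---you run a dimension count via the stabilizer $B\cap uP_1u^{-1}\cap u^\vee P_2(u^\vee)^{-1}$, whereas the paper writes the two Schubert cells as products $\prod U_\a$ over the respective inversion sets and argues that the index sets are disjoint---but this is a cosmetic reorganization of the same computation.

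One small inaccuracy in your commentary: you claim the cominuscule hypothesis is essential to establish the disjointness, writing $I(u)=\{\a:\scal{\a,u\chi_{P_1}}=-1\}$ etc. In fact the argument needs only $I(u)=\{\a\in\Phi^+:\scal{\a,u\chi_{P_1}}<0\}$ and $I(u^\vee)=\{\a\in\Phi^+:\scal{\a,u\chi_{P_1}}>0\}$, which hold for any parabolic since $\b\in\Phi_{P}$ iff $\scal{\b,\chi_{P}}\geq 0$. So disjointness uses only the opposite-pair condition and $u^\vee=uw_{P_2}$; the cominuscule trichotomy $\{-1,0,1\}$ is never invoked in the paper's proof of this lemma either. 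The claim that the argument ``breaks down'' without cominuscule is therefore wrong, although it does not invalidate the proof itself.
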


\begin{proo}
Recall that we have the following equalities
$$B\cdot uP_1=\prod_{\a>0,\ u^{-1}(\a)\not\in P_1}U_{\a}\cdot uP_1
\textrm{ and } B\cdot {u}^\vee
P_2=\prod_{\a>0,\ {{u}^\vee}^{-1}(\a)\not\in P_2}U_{\a}\cdot {u}^\vee
P_2.$$
We are thus left to prove that there is no positive root $\a$
with ${u}^{-1}(\a)\not\in P_1$ and ${u^\vee}^{-1}(\a)\not\in
P_2$. Let $\a$ be such a root. We have the inequalities
$\scal{\chi_{P_1} , u^{-1}(\a)}<0$ and
$\scal{\chi_{P_2} , {u^\vee}^{-1}(\a)}<0$. By Lemma
\ref{lemm-dual}, the second inequality is equivalent to 
$\scal{w_{P_2}(\chi_{P_2}),u^{-1}(\a)}<0$. But since
$w_{P'_2}(\chi_{P_2})=-\chi_{P_1}$ this leads to a
contradiction with the first inequality.
\end{proo}

\begin{lemm}
\label{lemm-closed}
The minimal orbits for the weak order in $G/H$ are
closed. 
\end{lemm}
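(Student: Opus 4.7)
The plan is to identify the minimal $B$-orbits for the weak order with the orbits $O_u := B \cdot (uP_1, u^\vee P_2)$ for $u \in W^{P_1}$, and to verify each such $O_u$ is closed in $G/H$.

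\emph{Closedness of $O_u$.} By Lemma \ref{lemm-double}, $O_u$ coincides with the $B \times B$-orbit $BuP_1/P_1 \times Bu^\vee P_2/P_2$, so its closure in $X$ is the product of Schubert varieties $X(u) \times X(u^\vee)$. I would decompose this product into $B \times B$-orbits indexed by pairs $(v, v') \leq (u, u^\vee)$ in the componentwise Bruhat order on $W^{P_1} \times W^{P_2}$, and show that each stratum with $(v, v') \neq (u, u^\vee)$ is disjoint from $G/H$. Indeed, a point $(bvP_1, b'v'P_2)$ of such a stratum lies in $G/H$ iff $v^{-1} B v' \cap P_1 w_0 P_2 \neq \emptyset$; combining a Bruhat-cell analysis of $v^{-1} B v'$ with the length identity $\ell(v) + \ell(v^\vee) = \ell(w_{P_2}) = \ell(u) + \ell(u^\vee)$ provided by Lemma \ref{lemm-dual} forces $(v, v') = (u, u^\vee)$. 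Hence $\overline{O_u}^X \cap G/H = O_u$, so $O_u$ is closed in $G/H$.

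\emph{Minimal orbits are the $O_u$.} A closed $B$-orbit is automatically minimal for the weak order, which refines the closure order. For the converse, let $O \subseteq G/H$ be a $B$-orbit. The closure $\bar O \subset X$ is projective and $T$-stable, hence contains $T$-fixed points; choosing a generic cocharacter of $T$ whose Bialynicki-Birula flow carries generic points of $O$ into $G/H$, I obtain a $T$-fixed point of $\bar O$ inside $G/H$, which by Lemma \ref{lemm-dual}'s characterization of $T$-fixed points in $G/H$ equals $(uP_1, u^\vee P_2)$ for some $u \in W^{P_1}$; thus $O_u \subseteq \bar O$. If $O$ is minimal in the weak order but $O \neq O_u$, the strict inclusion $O_u \subsetneq \bar O$ together with iterated application of Lemma \ref{lemm-edges} yields a weak-order chain $O_u \prec O^{(1)} \prec \cdots \prec O$, contradicting the minimality of $O$. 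Hence $O = O_u$.

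\emph{Main obstacle.} The principal difficulties are the Bruhat-cell analysis in the first step and the construction of the weak-order chain in the second. For the latter, one must verify that at each stage a minimal parabolic can be chosen to raise the current orbit to a strictly larger $B$-orbit still contained in $\bar O$; this is ensured by the irreducibility of $\bar O$ together with the three-case structure (U, N, T) provided by Lemma \ref{lemm-edges}.
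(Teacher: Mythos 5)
Your proposal does not use the key structural fact that the paper's proof rests on, and as a consequence it has a genuine gap at the crucial step. The paper's proof is essentially one line: $G/H$ is a \emph{symmetric} homogeneous space, because $H$ is the identity component of the fixed points of the involution $\mathrm{Int}(\chi_{P_1}(-1))$, and for symmetric homogeneous spaces it is a theorem of Springer that minimal $B$-orbits for the weak order are closed. That symmetric structure is doing all the work, and nothing in your argument invokes it.

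The gap in your argument is in the second part, where you assert that $O_u \subsetneq \bar{O}$ forces a weak-order chain $O_u \prec O^{(1)} \prec \cdots \prec O$, contradicting minimality of $O$. This does not follow from Lemma~\ref{lemm-edges}. The weak order refines the (reverse) closure order in only one direction: $O' \preccurlyeq O$ implies $O' \subseteq \bar{O}$, but the converse fails for general spherical varieties -- there exist $B$-orbits $O'$ contained in $\bar O$ that are not comparable to $O$ in the weak order, and indeed there exist spherical $G/H$ in which minimal orbits for the weak order are \emph{not} closed. Your proposed fix in the ``main obstacle'' paragraph, that irreducibility of $\bar O$ plus the U/N/T classification produce the chain, does not work: given a minimal parabolic $P$ raising $O'$, the orbit $PO'$ need not be contained in $\bar O$ (one only knows $\bar O$ is $B$-stable, not $P$-stable), so iterating lemm-edges need not stay inside $\bar O$. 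Closing this gap would amount to re-proving the Richardson--Springer compatibility between the weak order and closures for symmetric varieties, which is precisely what the paper avoids by citing \cite{springer-c}. A secondary, smaller issue is the claim that a generic cocharacter flows a generic point of $O$ to a $T$-fixed point still inside $G/H$: the paper handles this delicately in Proposition~\ref{prop-spec} by first proving a nonvanishing-coordinate lemma and choosing a specific cocharacter, and notably it uses Lemma~\ref{lemm-closed} itself at that point, so your argument also risks circularity with the proof of Proposition~\ref{prop-spec} that follows.
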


\begin{proo}
This follows from the fact that this statement holds true for
symmetric homogeneous spaces (see \cite{springer-c}) and the fact that $H$ is a symmetric subgroup: $H$ is the connected component of the subgroup of fixed points under the involution given by conjugation by $\chi_{P_1}(-1)$ (see also \cite[Proposition 3.5]{wahl}).
\end{proo}

\begin{prop}
\label{prop-spec}
The minimal $B$-orbits in $G/H$ are $B\times B$-orbits.
\end{prop}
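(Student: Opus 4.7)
\begin{proo}
Let $\mathcal{O}$ be a minimal $B$-orbit in $G/H$; by Lemma \ref{lemm-closed} it is closed in $G/H$. The plan is to exhibit a $T$-fixed point of $X = G/P_1 \times G/P_2$ inside $\mathcal{O}$; once this is done, such a point must lie in $X^T \cap (G/H)$, which by Lemma \ref{lemm-dual} consists exactly of the $|W^{P_1}|$ points $(uP_1, u^\vee P_2)$ for $u \in W^{P_1}$, and then Lemma \ref{lemm-double} immediately implies that $\mathcal{O} = B \cdot (uP_1, u^\vee P_2)$ is a $B \times B$-orbit.

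To locate this $T$-fixed point, I would use the $B$-equivariant first projection $\pi_1 : G/H \to G/P_1$. Its image $\pi_1(\mathcal{O})$ is a single $B$-orbit in $G/P_1$, hence a Schubert cell $BuP_1$ for a unique $u \in W^{P_1}$. Let $F := \pi_1^{-1}(uP_1)$, which is closed in $G/H$ since the point $uP_1$ is closed in $G/P_1$. Left translation by $u^{-1}$ identifies $F$ with $P_1/H \cong U_{P_1}$, the unipotent radical of $P_1$, and under this identification the point $(uP_1, u^\vee P_2)$ corresponds to the identity $e \in U_{P_1}$, which is the unique $T$-fixed point of $U_{P_1}$ for the natural conjugation action.

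The intersection $\mathcal{O} \cap F$ is non-empty (because $uP_1 \in \pi_1(\mathcal{O})$), closed in $F$, and stable under $T \subset B$. I would then use the one-parameter subgroup $\lambda(t) := u\,\chi_{P_1}(t)\,u^{-1}$ of $T$: after transporting via the identification $F \cong U_{P_1}$, its action becomes conjugation by $u^{-1}\lambda(t)\,u = \chi_{P_1}(t)$, and because $\chi_{P_1}$ pairs strictly positively with every root occurring in $U_{P_1}$, this conjugation contracts $U_{P_1}$ to $e$ as $t \to 0$. Picking any $x \in \mathcal{O} \cap F$ and taking the limit $\lim_{t \to 0} \lambda(t) \cdot x$, closedness of $\mathcal{O} \cap F$ in the affine variety $F$ forces the limit, namely $(uP_1, u^\vee P_2)$, to lie in $\mathcal{O}$, as desired.

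The main obstacle I anticipate is carefully tracking the $T$-equivariance of the identification $F \cong U_{P_1}$: the natural left $T$-action on $F$ coming from the action on $G/H$ transports to a Weyl-twisted conjugation action on $U_{P_1}$, and the choice $\lambda = u\chi_{P_1} u^{-1}$ is engineered precisely so that this twist is absorbed, leaving the standard contracting action of $\chi_{P_1}$ on its own unipotent radical.
\end{proo}
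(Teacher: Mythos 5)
Your proof is correct, and it takes a genuinely different route through the key technical step. Both you and the paper begin the same way — project $\mathcal{O}$ to $G/P_1$, reduce to the case where the first coordinate is the $T$-fixed point $uP_1$ with $u \in W^{P_1}$, and close with Lemma~\ref{lemm-closed} plus Lemma~\ref{lemm-double}. The difference lies in how the $T$-fixed point $(uP_1, u^\vee P_2)$ is forced into $\mathcal{O}$. The paper works in the minimal equivariant embedding $G/P_2 \subset \mathbb{P}(V_2)$: it proves an auxiliary lemma that the coordinate $y_{u^\vee(\varpi_{P_2})}$ of the second component is nonzero (via a clever $P_1^u$-stability argument for a coordinate hyperplane), and only then can a one-parameter subgroup be chosen so that the limit in $\mathbb{P}(V_2)$ is the desired point. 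You instead exploit the structure that the opposite-pair hypothesis gives for free: since $P_2^{w_0}$ is the opposite parabolic of $P_1$, one has $H = P_1 \cap P_2^{w_0} = L_1$, so the fiber $F = \pi_1^{-1}(uP_1) \simeq P_1/L_1 \simeq U_{P_1}$ is an affine space on which the cocharacter $u\chi_{P_1}u^{-1}$ contracts \emph{every} point to the origin. This sidesteps the nonvanishing lemma entirely — no leading-coefficient condition is needed because the contraction on the affine fiber is total — and also avoids the paper's small detour of first taking the limit inside $X$ and then arguing it lands back in the open $G$-orbit $G/H$; your fiber $F$ lives in $G/H$ from the start. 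The trade-off is that your argument leans more heavily on the identification $H = L_1$, which is special to the opposite-pair case (but that is exactly the case being treated in this subsection, so this is not a loss). Your proof is a clean alternative.
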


\begin{proo}
Let $z=(xP_1,yP_2)$ be an element in the dense $G$-orbit of
$G/P_1\times G/P_2$ such that the $B$-orbit $B\cdot z$ is minimal for
the weak order. By letting $B$ act on the first factor, we may assume
that $xP_1$ is fixed by $T$ \emph{i.e.} we have $x\in N_G(T)$. Let $u$ be
its class in the Weyl group $W=N_G(T)/T$. We may assume that $u\in
W^{P_1}$. 

We want to prove that $y$ is also stable by $T$. For this we
introduce the minimal equivariant embedding $G/P_2\subset \p(V_2)$ of
$G/P_2$. The vector space $V_2$ is a representation of $G$ of highest
weight $\varpi_{P_2}$. This is the fundamental weight corresponding to te coweight $\chi_{P_2}$. Let us denote by $\Pi_2$ the set of $T$-weights
of this representation. We have a decomposition 
$$V_2=\bigoplus_{\chi\in\Pi_2}V_2^\chi$$
where $V_2^\chi$ is the eigenspace of weight $\chi$. 
Let $v^i_\chi$ be a basis of the eigenspace $V_2^\chi$ for $\chi\in \Pi_2$. 
We may therefore write $y\cdot v_{\varpi_{P_2}}=\sum_{\chi\in
  \Pi_2}y^i_\chi v_\chi$ with $y^i_\chi$ a scalar. Note that for $\chi$ of the form $W\cdot\varpi_{P_2}$ we have $\dim V_2^\chi=1$ and we write simply $y_\chi$ in that case.

\begin{lemm}
We have $y_{{u}^\vee(\varpi_{P_2})}\neq0$.
\end{lemm}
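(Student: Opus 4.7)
The plan is to exploit the unique (up to scalar) $G$-invariant pairing coming from the isomorphism $V_2\cong V_1^*$ and to unwind what its non-vanishing at $z$ means.

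First, since $(P_1,P_2)$ is an opposite pair, dualizing $w_0(\chi_{P_1})=-\chi_{P_2}$ gives $w_0(\varpi_{P_1})=-\varpi_{P_2}$, and so the minuscule representations $V_1$ and $V_2$ satisfy $V_2\cong V_1^*$ as $G$-representations. Let $\langle\cdot,\cdot\rangle : V_1\otimes V_2\to \k$ denote the unique (up to scalar) $G$-invariant pairing, and let $s$ be the corresponding $G$-invariant section of $\cO(1)\boxtimes\cO(1)$ on $\p(V_1)\times\p(V_2)$. By $T$-equivariance, $\langle v_\mu, v_\nu\rangle = 0$ unless $\mu+\nu = 0$, and in particular $\langle v_{\varpi_{P_1}}, v_{w_0\varpi_{P_2}}\rangle \neq 0$ since $w_0\varpi_{P_2}=-\varpi_{P_1}$. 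Hence $s$ does not vanish at $(P_1,w_0 P_2)$; by $G$-invariance it does not vanish anywhere on the dense $G$-orbit, in particular at $z=(uP_1, yP_2)$.

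It remains to compute $s(z)$. Using $u v_{\varpi_{P_1}} = v_{u\varpi_{P_1}}$ and the expansion $y v_{\varpi_{P_2}}=\sum_\chi y_\chi v_\chi$, the same $T$-weight vanishing forces
\[
s(z) = \langle v_{u\varpi_{P_1}}, y v_{\varpi_{P_2}}\rangle = c\cdot y_{-u\varpi_{P_1}}
\]
for some nonzero scalar $c$. I then identify $-u\varpi_{P_1}$ with $u^\vee\varpi_{P_2}$: by Lemma~\ref{lemm-dual}, $u^\vee = u w_{P_2}$, and since $w_0 = w_{P_2}\cdot w^{P_2}_0$ is length-additive with $w^{P_2}_0\in W_{P_2}$ (and $W_{P_2}$ fixes $\varpi_{P_2}$), we have $w_{P_2}\varpi_{P_2}=w_0\varpi_{P_2}=-\varpi_{P_1}$. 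Consequently $u^\vee\varpi_{P_2}=u w_{P_2}\varpi_{P_2}=-u\varpi_{P_1}$, and the non-vanishing of $s(z)$ is exactly the desired $y_{u^\vee\varpi_{P_2}}\neq 0$.

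Observe that the minimality of $B\cdot z$ is not used here; the conclusion relies only on $z$ being in the dense $G$-orbit and on $uP_1$ being $T$-fixed. The main conceptual step is the identification of the invariant section, which is available precisely because $(P_1,P_2)$ is opposite and the representations $V_1$, $V_2$ are minuscule (giving $V_2\cong V_1^*$); I expect no further serious obstacles beyond this identification and the weight bookkeeping.
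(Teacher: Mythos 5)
Your proof is correct, and it takes a route that is related to but genuinely cleaner than the paper's. The paper works entirely inside $\p(V_2)$: it shows that the divisor $D_{u^\vee}=\{[v]: \text{$v_{u^\vee(\varpi_{P_2})}$-coordinate vanishes}\}$ is $P_1^u$-stable by a direct cocharacter computation, and then deduces $[y\cdot v_{\varpi_{P_2}}]\notin D_{u^\vee}$ from the fact that $yP_2\subset P_1^u\,u^\vee P_2$. You instead introduce the $G$-invariant pairing $V_1\otimes V_2\to \k$ (available since $V_2\cong V_1^*$, which in turn follows from $(P_1,P_2)$ being opposite and the $V_i$ minuscule) and the corresponding $G$-invariant section $s$ of $\cO(1)\boxtimes\cO(1)$ on $\p(V_1)\times\p(V_2)$; non-vanishing of $s$ on the whole dense $G$-orbit, plus the weight bookkeeping $-u(\varpi_{P_1})=u^\vee(\varpi_{P_2})$ from Lemma~\ref{lemm-dual}, gives the result. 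The two arguments are in fact two descriptions of the same object: restricting your $s$ to the slice $\{uP_1\}\times\p(V_2)$ yields exactly the linear form cutting out the paper's $D_{u^\vee}$, and $P_1^u=\Stab_G(uP_1)$, so $G$-invariance of $s$ immediately implies the $P_1^u$-stability of $D_{u^\vee}$ that the paper verifies by hand. What your framing buys is conceptual transparency: the lemma is revealed as the non-vanishing of a canonical invariant pairing on the open orbit, and the somewhat ad hoc cocharacter check disappears; it also makes explicit, as you note, that minimality of $B\cdot z$ plays no role here. The only slight cost is that you need to invoke $G/P_1\hookrightarrow\p(V_1)$ as well as $G/P_2\hookrightarrow\p(V_2)$, whereas the paper stays in $\p(V_2)$. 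Both are valid; yours is arguably the ``right'' proof.
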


\begin{proo}
Note that $(uP_1,{u}^\vee P_2)$ and $(xP_1,yP_2)$ are in the dense
$G$-orbit. Since $uP_1=xP_1$ by definition of $u$, we have the
inclusion $yP_2\subset {P_1}^{u} {u}^\vee P_2$. Therefore the class
$[y\cdot v_{\varpi_{P_2}}]$ in $\p(V_2)$ is in the ${P_1}^{u}$-orbit of
the class of a vector of weight ${u}^\vee(\varpi_{P_2})$. 

Consider on the other hand the divisor $D_{{u}^\vee}$ of $\p(V_2)$
defined as the locus of classes $[v]$ of vectors $v$ with trivial
coordinate on $v_{u^\vee(\varpi_{P_2})}$. We claim that this divisor is
${P_1}^{u}$-stable. If this is the case, then $[y\cdot
  v_{\varpi_{P_2}}]$ is not contained in it and the result follows. 

Proving that $D_{{u}^\vee}$ is ${P_1}^{u}$-stable is equivalent to
proving that the weight vector $v_{-{u}^\vee(\varpi_{P_2})}$ of
weight $-{u}^\vee(\varpi_{P_2})$ in the dual space $V_2^\vee$ is
${P_1}^{u}$-stable. The cocharacter defining this stabiliser is
precisely $-{u}^\vee(\chi_{P_2})$ and we have the equalities  
$-{u}^\vee(\chi_{P_2}) = {u}^\vee(w_{P_1}(\chi_{P_1})) =
{u}^\vee(w_{P_2}^{-1}(\chi_{P_1})) = {u}(\chi_{P_1})$
proving the claim.
\end{proo}

As an easy consequence we get that $(uP_1,u^\vee P_2)$ is in the
closure of the $B$-orbit $B\cdot(xP_1,yP_2)$ in $G/H$. Indeed,
choose a one parameter subgroup $\G_m$ of $T$ such that
${u}(\varpi_{P_2})$ has maximal weight on this subgroup. Note that
since $x$ is $T$-stable it is also $\G_m$-stable and that
$[v_{{{u}^\vee(\varpi_{P_2})}}]$ is in the closure of the orbit
$\G_m[y\cdot v_{\varpi_{P_2}}]$. Then letting $\G_m$ act on
$(xP_1,yP_2)$ we get that $(uP_1,u^\vee P_2)$ is in the closure of $B\cdot
(xP_1,yP_2)$ in $X$. Since $(uP_1,u^\vee P_2)$ is in the dense
$G$-orbit $G/H$ it is therefore in the closure of $B\cdot(xP_1,yP_2)$
in $G/H$. 

Since by Lemma \ref{lemm-closed} the orbit $B\cdot(xP_1,yP_2)$ is
closed we get $(uP_1,u^\vee P_2)\in B\cdot(xP_1,yP_2)$. Lemma
\ref{lemm-double} concludes the proof.
\end{proo}

\subsection{Minimal orbits: General case}

In this subsection, we consider the spherical variety $X = G/P_1 \times G/P_2$ with $P_1$ and $P_2$ two cominuscule parabolic subgroups and. We pick a $G$-orbit $G \cdot ( P_1 , w P_2) \simeq G/H$ of $X$ with $H=P_1\cap P_2^w$ and $w \in W$. 

\begin{theo}
\label{theo-spec}
The minimal $B$-orbits in $G/H$ are $B\times B$-orbits.
\end{theo}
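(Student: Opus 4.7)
The strategy is to reduce to Proposition~\ref{prop-spec} via the parabolic induction from Section~\ref{section-g-orb}. First, Corollary~\ref{cor-ind} gives $G/H\simeq G\times^R L_R/K$, and by Lemma~\ref{lemm-LK} the pair $(Q_1,Q_2)$ of parabolic subgroups of $L_R$ is opposite; both remain cominuscule, as the restriction of a cominuscule cocharacter to a sub-root system is cominuscule. By Lemma~\ref{prop-triv}(i), $L_R/K$ is the dense $L_R$-orbit in $L_R/Q_1\times L_R/Q_2$.

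Next I apply Lemma~\ref{lemm-ind}: the $B$-orbits in $G\times^R L_R/K$ are parametrized by pairs $(\mathcal{O}',BgR/R)\in B_{L_R}(L_R/K)\times B(G/R)$, and the weak order on $B(G/H)$ is the product of those on the two factors, since the edges listed in Lemma~\ref{lemm-ind}(ii) never mix the two components. Since $B(G/R)$ has a unique weak-order minimum, the closed Schubert point $R/R$, the minimal $B$-orbits in $G/H$ correspond to pairs $(\mathcal{O}',R/R)$ with $\mathcal{O}'$ a minimal $B_{L_R}$-orbit in $L_R/K$. Applying Proposition~\ref{prop-spec} to $L_R$ with the opposite cominuscule pair $(Q_1,Q_2)$, any such $\mathcal{O}'$ is a $B_{L_R}\times B_{L_R}$-orbit inside $L_R/Q_1\times L_R/Q_2$.

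It remains to transport this conclusion to $X$. The $B$-orbit in $G/H\hookrightarrow X$ corresponding to $(\mathcal{O}',R/R)$ lies inside the $L_R$-stable subvariety $L_R P_1/P_1\times L_R wP_2/P_2\subseteq X$, which is the image of the embedding $L_R/Q_1\times L_R/(L_R\cap P_2^w)\hookrightarrow X$, $(l_1Q_1,l_2(L_R\cap P_2^w))\mapsto(l_1 P_1,l_2 w P_2)$, and is $L_R$-equivariantly isomorphic (via conjugation by $w_0^{L_R}$ on the second factor) to $L_R/Q_1\times L_R/Q_2$. The crucial ingredient, which extends the argument of Lemma~\ref{prop-triv}(iii) by exploiting cominusculity of \emph{both} $P_1$ and $P_2$, is the containment $U_R\subseteq P_1\cap P_2^w$. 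Since $L_R$ normalizes $U_R$, this containment ensures that the $B\times B$-action on the above subvariety factors through the projection to $B_{L_R}\times B_{L_R}$; hence the $B_{L_R}\times B_{L_R}$-orbit $\mathcal{O}'$ downstairs corresponds to a $B\times B$-orbit in $X$. I expect this last step --- carefully matching the Borel actions across the various embeddings, particularly in the case when $R$ does not contain $B$ --- to be the main technical obstacle.
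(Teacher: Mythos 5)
Your proof follows the same route as the paper's: reduce via Lemma~\ref{lemm-ind} and Corollary~\ref{cor-ind} to the opposite-pair case treated by Proposition~\ref{prop-spec}. The transport step you flag at the end is also what underlies the paper's terse ``The result follows,'' and your sketch (using $U_R\subseteq H$, so that $B\cap R$ acts on the fiber $L_R/K$ through its image $B_{L_R}$, and matching the two copies of $B_{L_R}$ with $B\times B$ under the embedding into $X$) is the correct way to close it.
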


\begin{proo}
According to Lemma \ref{lemm-ind}, a minimal $B$-orbit is of the form $B g R \times^R {\mathcal{O}}$ where $B g R/R$ is a minimal $B$-orbit in $B(G/R)$ and $\mathcal{O}$ is a minimal $B_{L_R}$-orbit in $L_R/K$. Therefore $B g R /R$ is a point and $\mathcal{O}$ is a $B_{L_R} \times B_{L_R}$-orbit. The result follows.
\end{proo}

\section{Distance and rank}
\label{sec-dist}

In this section we consider $X = G/P_1 \times G/P_2$ with $G$ simply laced and $P_1$, $P_2$ cominuscule. We prove that there is no edge of type N in the graph
$B(X)$. 

By definition of the weak order, we only need to consider $B(G/H)$ for $G/H$ a $G$-orbit with $H = P_1 \cap P_2^w$ in $X$. Note that thanks to Lemma \ref{lemm-ind} and Corollary \ref{cor-ind}, we only need to prove this result for opposite pairs. In all the section we assume that $P_1$ and $P_2$ are cominuscule and shall specify when we assume that the pair $(P_1,P_2)$ is an opposite pair.

\subsection{Distance}

In this subsection we introduce a \emph{distance} $d(x,y)$ between
$T$-fixed points $xP_1\in G/P_1$ and $yP_2\in G/P_2$ and prove that it
is closely related to the rank of the $B$-orbit of $(xP_1,yP_2)$. Let $\varpi_{P_i}$ be the fundamental weight corresponding to the cocharacter $\chi_{P_i}$. Denote by
$V_{\varpi_{P_i}}$ the irreducible representation of highest weight
${\varpi_{P_i}}$ and by $\Pi_{\varpi_{P_i}}$ the set of weights of
$V_{\varpi_{P_i}}$ for $i\in\{1,2\}$. Recall that $W\cdot\varpi_{P_i}$
the $W$-orbit of $\varpi_{P_i}$ is equal to $\Pi_{\varpi_{P_i}}$ in
our situation since $G$ is simply laced and both weights are
cominuscule therefore minuscule. Recall also that the map $W^{P_i} \to \Pi_{\varpi_i}, u \mapsto u(\varpi_{P_i})$ is bijective and that the Schubert cells in $G/P_i$ are of the form $\Omega_u = B u P_i/P_i$ for a unique $u \in W^{P_i}$. Fix $(\ ,\ )$ a $W$-invariant scalar product and write $\vert\cdot \vert$ for the associated norm. 

\begin{defi}
For $\lambda_i\in\Pi_{\varpi_{P_i}}$ define
$d(\lambda_1,\lambda_2)=(\varpi_{P_1},\varpi_{P_2})- (\lambda_1,\lambda_2)$.
\end{defi}

\begin{rema}
(\i) The distance $d(\lambda_1,\lambda_2)$ is $W$-invariant.

(\i\i) If $\varpi_{P_1}=\varpi_{P_2}$, then we have 
  $d(\lambda_1,\lambda_2)=\frac{1}{2}\vert\lambda_1-\lambda_2\vert^2$.
\end{rema}

\begin{lemm}
We have
$d(\lambda_1,\lambda_2)\in[0,(\varpi_{P_1},\varpi_{P_2}-w_{P_2}(\varpi_{P_2}))]$. 
\end{lemm}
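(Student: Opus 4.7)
The plan is to use the $W$-invariance of $d$ (noted in the preceding remark) to reduce the question to extremising a fixed linear functional on the weight set $\Pi_{\varpi_{P_2}}=W\cdot\varpi_{P_2}$. Writing $\lambda_1=u\varpi_{P_1}$ for some $u\in W$, this invariance gives $d(\lambda_1,\lambda_2)=d(\varpi_{P_1},u^{-1}\lambda_2)$, and $u^{-1}\lambda_2$ still belongs to $\Pi_{\varpi_{P_2}}$. Hence it is enough to show
\[
(\varpi_{P_1},w_{P_2}(\varpi_{P_2}))\;\leq\;(\varpi_{P_1},\mu)\;\leq\;(\varpi_{P_1},\varpi_{P_2})\qquad\text{for every }\mu\in\Pi_{\varpi_{P_2}}.
\]

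The next step will be to identify $w_{P_2}(\varpi_{P_2})$ with the lowest weight $w_0(\varpi_{P_2})$ of $V_{\varpi_{P_2}}$. This is a standard consequence of the length-additive factorisation $w_0=w_{P_2}\cdot v$, where $v$ is the longest element of $W_{P_2}$ (available because $w_{P_2}$ is the longest element of $W^{P_2}$), combined with the $W_{P_2}$-invariance of $\varpi_{P_2}$. Consequently the required bounds are precisely the extremal values of the linear functional $(\varpi_{P_1},\cdot)$ on the Weyl orbit $W\cdot\varpi_{P_2}$, attained at the highest and the lowest weight respectively.

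To conclude, I would invoke the classical fact that for a dominant weight $\varpi$, every $\mu\in W\cdot\varpi$ satisfies that both $\varpi-\mu$ and $\mu-w_0(\varpi)$ are non-negative integer combinations of simple roots. Pairing with $\varpi_{P_1}$, which, being dominant, has non-negative inner product with every simple root (and hence with every positive root), then delivers the two desired inequalities at once.

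The only mildly non-routine point is the identification $w_{P_2}(\varpi_{P_2})=w_0(\varpi_{P_2})$ above; once this is in place the rest is pure weight theory. It is worth noting that neither the simply-laced hypothesis nor the cominuscule assumption on $P_1,P_2$ enters this particular argument—the statement holds in general for any pair of dominant weights.
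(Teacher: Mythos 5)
Your argument is essentially the same as the paper's: reduce via $W$-invariance to $d(\varpi_{P_1},\mu)=(\varpi_{P_1},\varpi_{P_2}-\mu)$ and then use the fact that $\varpi_{P_2}$ and $w_{P_2}(\varpi_{P_2})$ are the highest and lowest weights of $V_{\varpi_{P_2}}$; you merely spell out the standard factorisation $w_0=w_{P_2}\cdot w_0^{W_{P_2}}$ identifying $w_{P_2}(\varpi_{P_2})$ with the lowest weight, and the pairing-with-a-dominant-weight step, which the paper leaves implicit. One small caveat about your closing remark: the write-up does quietly use the minuscule hypothesis at the very first step, when you write $\lambda_1=u\varpi_{P_1}$ -- this is exactly the identification $\Pi_{\varpi_{P_1}}=W\cdot\varpi_{P_1}$, which is available here because $\varpi_{P_1}$ is minuscule (simply laced plus cominuscule). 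So the claim that the simply-laced and cominuscule assumptions ``do not enter this argument'' is not quite accurate, though it does not affect the validity of the proof of the lemma in the paper's setting.
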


\begin{proo}
Since the distance is $W$-invariant, we have $d(\lambda_1,\lambda_2)=d(\varpi_{P_1},\mu)$ for some $\mu\in\Pi_{\varpi_{P_2}}$. We have $d(\varpi_{P_1},\mu)=(\varpi_{P_1},\varpi_{P_2}-\mu)$. Since $\varpi_{P_2}$ is the highest weight of $V_{\varpi_{P_2}}$ and $w_{P_2}(\varpi_{P_2})$ the lowest weight, the result follows. 
\end{proo}

\begin{lemm}
We have $d(\lambda_1,\lambda_2)=0$ if and only if $\lambda_1$ and $\lambda_2$ belong to the same chamber.
\end{lemm}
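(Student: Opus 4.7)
The ``if'' direction is immediate: if $\lambda_1$ and $\lambda_2$ both belong to the closure of a common Weyl chamber, I pick $w\in W$ with $w^{-1}\lambda_i$ dominant for $i=1,2$. Since the dominant representative of each $W$-orbit is unique, $w^{-1}\lambda_i=\varpi_{P_i}$, and $W$-invariance of $(\cdot,\cdot)$ yields $(\lambda_1,\lambda_2)=(\varpi_{P_1},\varpi_{P_2})$, hence $d(\lambda_1,\lambda_2)=0$.

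For the converse, $W$-invariance of $d$ and of the ``same chamber'' condition lets me reduce to $\lambda_1=\varpi_{P_1}$. Since $\lambda_2$ is a weight of the minuscule representation $V_{\varpi_{P_2}}$, one has $\lambda_2\leq\varpi_{P_2}$, so $\lambda_2=\varpi_{P_2}-\sum_j c_j\alpha_j$ with $c_j\in\Z_{\geq 0}$. In the simply-laced case all roots share the same length, so $(\varpi_{P_1},\alpha_j)$ is a positive multiple of $\scal{\varpi_{P_1},\alpha_j^\vee}=\delta_{j,i_1}$, where $i_1$ indexes the simple root attached to $P_1$. Thus $d(\lambda_1,\lambda_2)=0$ becomes simply $c_{i_1}=0$. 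The problem then reduces to showing that $c_{i_1}=0$ forces $\lambda_2\in W_{P_1}\varpi_{P_2}$, where $W_{P_1}=\langle s_j:j\neq i_1\rangle$; indeed $s_j\varpi_{P_1}=\varpi_{P_1}$ for $j\neq i_1$, so if $\lambda_2=u\varpi_{P_2}$ with $u\in W_{P_1}$, then $u^{-1}\lambda_1=\varpi_{P_1}$ and $u^{-1}\lambda_2=\varpi_{P_2}$ are both dominant, placing $\lambda_1,\lambda_2$ in the common chamber $u\overline{C^+}$.

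I will prove the key implication by induction on $\sum_j c_j$. The base case $\lambda_2=\varpi_{P_2}$ is trivial. Otherwise $\lambda_2$ is not dominant (the unique dominant element of the orbit is $\varpi_{P_2}$), so there exists a simple root $\alpha_j$ with $\scal{\lambda_2,\alpha_j^\vee}<0$; by minusculeness the value is exactly $-1$, and $s_j\lambda_2=\lambda_2+\alpha_j$ is again a weight of $V_{\varpi_{P_2}}$. If it were the case that $j=i_1$, then $\varpi_{P_2}-s_j\lambda_2=\varpi_{P_2}-\lambda_2-\alpha_{i_1}$ would have a negative $\alpha_{i_1}$-coefficient, which is impossible. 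Hence $j\neq i_1$, and $s_j\lambda_2$ still satisfies $c_{i_1}'=0$ but with a strictly smaller coefficient sum, so by induction $s_j\lambda_2\in W_{P_1}\varpi_{P_2}$ and therefore $\lambda_2\in W_{P_1}\varpi_{P_2}$ as well.

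The main obstacle is this inductive step: it rests on two standard properties of minuscule representations, namely that $\varpi_{P_2}$ is the unique dominant weight in its $W$-orbit, and that $\scal{\mu,\alpha^\vee}\in\{-1,0,1\}$ for every weight $\mu$ of $V_{\varpi_{P_2}}$ and every root $\alpha$. The simply-laced hypothesis enters only to identify $(\varpi_{P_1},\alpha_j)$ with a positive multiple of $\delta_{j,i_1}$; without it one would have to track root-length ratios and the equivalence between $d=0$ and $c_{i_1}=0$ could fail.
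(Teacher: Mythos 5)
Your proof is correct and takes essentially the same approach as the paper: both reduce to $\lambda_1 = \varpi_{P_1}$, induct on the height of $\varpi_{P_2}-\lambda_2$, and at each step locate a simple root $\alpha$ with $s_\alpha\lambda_2 = \lambda_2+\alpha \leq \varpi_{P_2}$ that is orthogonal to $\varpi_{P_1}$ (you phrase this as $j\neq i_1$, the paper as $(\varpi_{P_1},\alpha)=0$), so that $s_\alpha$ fixes $\varpi_{P_1}$ and decreases the induction parameter. Your intermediate reformulation $d=0\iff c_{i_1}=0\iff\lambda_2\in W_{P_1}\varpi_{P_2}$ is a slightly more explicit packaging of the same idea; note the simply-laced hypothesis is not actually needed for the equivalence $d=0\iff c_{i_1}=0$, since $(\varpi_{P_1},\alpha_j)$ is a positive multiple of $\scal{\varpi_{P_1},\alpha_j^\vee}$ in every Cartan type.
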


\begin{proo}
If $\lambda_1$ and $\lambda_2$ belong to the same chamber, then letting $W$ act we may assume that this chamber is the dominant chamber. In particular $\lambda_i=\varpi_{P_i}$ and the distance vanishes. Conversely, we may assume by letting $W$ act that $\lambda_1=\varpi_{P_1}$. We proceed by induction on $\varpi_{P_2}-\lambda_2$. If $\lambda_2=\varpi_{P_2}$, we are done. Otherwise $\lambda_2<\varpi_{P_2}$ and there exists a simple root $\a$ such that 
$$\lambda_2< s_\a(\lambda_2)=\lambda_2+\a\leq\varpi_{P_2}.$$
Furthermore, since $d(\lambda_1,\lambda_2)=d(\varpi_{P_1},\lambda_2)=(\varpi_{P_1},\varpi_{P_2}-\lambda_2)=0$ we must have $(\varpi_{P_1},\a)=0$. Then we have $0=d(s_\a(\varpi_{P_1}),s_\a(\lambda_2))=d(\varpi_{P_1},s_\a(\lambda_2))$. By induction, $\varpi_{P_1}$ and $s_\a(\lambda_2)$ are in the same chamber. The same is therefore true for $s_\a(\varpi_{P_1})=\varpi_{P_1}$ and $\lambda_2$.
\end{proo}

\begin{coro}
If $d(\lambda_1,\lambda_2)>0$, then there exists a root $\a$ with $(\lambda_1,\alpha)(\lambda_2,\a)<0$.
\end{coro}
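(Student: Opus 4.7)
The plan is to derive the corollary from the previous lemma by a contrapositive argument. That lemma tells us $d(\lambda_1,\lambda_2)>0$ precisely means $\lambda_1$ and $\lambda_2$ do not lie in the closure of a common Weyl chamber. So it is enough to establish the converse statement: if $(\lambda_1,\alpha)(\lambda_2,\alpha)\ge 0$ for every root $\alpha$, then there is a positive system $\Phi^+$ for which both $\lambda_1$ and $\lambda_2$ are dominant.

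To build such a $\Phi^+$, I would perturb: set $\mu=\lambda_1+\epsilon\lambda_2$ for a sufficiently small $\epsilon>0$, and take $\Phi^+\supseteq\{\alpha:(\mu,\alpha)>0\}$, completing to a full positive system arbitrarily on any remaining hyperplanes (which must then be orthogonal to both $\lambda_i$). For $\epsilon$ small enough, $(\mu,\alpha)$ has the sign of $(\lambda_1,\alpha)$ whenever the latter is non-zero; hence for $\alpha\in\Phi^+$, if $(\lambda_1,\alpha)<0$ then $(\mu,\alpha)<0$, contradicting $\alpha\in\Phi^+$, so $(\lambda_1,\alpha)\ge 0$ on $\Phi^+$. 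For $\lambda_2$, suppose $\alpha\in\Phi^+$ with $(\lambda_2,\alpha)<0$: the standing hypothesis forces $(\lambda_1,\alpha)\le 0$, which yields $(\mu,\alpha)=(\lambda_1,\alpha)+\epsilon(\lambda_2,\alpha)<0$, again a contradiction. Thus both $\lambda_i$ are $\Phi^+$-dominant, i.e.\ they lie in a common closed chamber, and the previous lemma gives $d(\lambda_1,\lambda_2)=0$.

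The main (mild) subtlety is that the minuscule weights $\lambda_i$ usually lie on several root hyperplanes, so the phrase ``same chamber'' in the previous lemma must be interpreted as ``common closed chamber''. The perturbation is set up precisely to produce an open chamber near $\lambda_1$ whose closure still contains $\lambda_2$; once that is done, the argument is a direct combination of the previous lemma with the standard fact that the closed Weyl chambers tile the ambient space.
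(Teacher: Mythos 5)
Your argument is correct and follows essentially the same route as the paper: both pass to the contrapositive and invoke the preceding lemma that $d(\lambda_1,\lambda_2)=0$ exactly when $\lambda_1,\lambda_2$ lie in a common (closed) Weyl chamber. The only difference is that you make explicit, via the standard $\mu=\lambda_1+\epsilon\lambda_2$ perturbation, the step the paper treats as known --- that absence of a separating root forces membership in a common closed chamber --- and you correctly flag that ``same chamber'' in the lemma must be read as ``common closed chamber,'' since the minuscule weights generally lie on several walls.
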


\begin{proo}
If there is no root $\a$ with $(\lambda_1,\alpha)(\lambda_2,\a)<0$, then $\lambda_1$ and $\lambda_2$ are in the same chamber and $d(\lambda_1,\lambda_2)=0$ by the previous lemma.
\end{proo}

\begin{lemm}
\label{lemme-facile}
For $(\lambda_1,\alpha)(\lambda_2,\a)<0$, we have $d(\lambda_1,s_\a(\lambda_2))=d(\lambda_1,\lambda_2)-1$.
\end{lemm}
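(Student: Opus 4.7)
The plan is to reduce the lemma to a one-line computation and then exploit the minuscule condition. Using $W$-invariance of the form, the reflection formula reads $s_\alpha(\lambda_2)=\lambda_2-\langle\lambda_2,\alpha^\vee\rangle\,\alpha$ with $\langle\mu,\alpha^\vee\rangle:=2(\mu,\alpha)/(\alpha,\alpha)$. Plugging this into the definition of $d$ I would get
$$
d(\lambda_1,s_\alpha(\lambda_2))-d(\lambda_1,\lambda_2)=(\lambda_1,\lambda_2)-(\lambda_1,s_\alpha(\lambda_2))=\langle\lambda_2,\alpha^\vee\rangle\,(\lambda_1,\alpha),
$$
so the lemma reduces to showing that this quantity equals $-1$.

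Next I would invoke the standing assumptions of the section: $G$ is simply laced and each $\varpi_{P_i}$ is cominuscule, hence minuscule (as the text already observed). Consequently, every $\lambda\in W\cdot\varpi_{P_i}$ satisfies $\langle\lambda,\alpha^\vee\rangle\in\{-1,0,1\}$ for every root $\alpha$. The hypothesis $(\lambda_1,\alpha)(\lambda_2,\alpha)<0$ rules out the value $0$ in each case and forces $\langle\lambda_1,\alpha^\vee\rangle$ and $\langle\lambda_2,\alpha^\vee\rangle$ to have opposite signs, so their product is $-1$.

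Finally, I would fix the standard normalization $(\alpha,\alpha)=2$ of the $W$-invariant form (the natural choice in the simply laced case, consistent with the identity $d(\lambda_1,\lambda_2)=\tfrac12|\lambda_1-\lambda_2|^2$ of the previous remark when $\varpi_{P_1}=\varpi_{P_2}$, and the only one for which $d$ takes integer values on $W\varpi_{P_1}\times W\varpi_{P_2}$). With this convention, the right-hand side of the displayed identity rewrites as
$$
\langle\lambda_2,\alpha^\vee\rangle\,(\lambda_1,\alpha)=\frac{(\alpha,\alpha)}{2}\,\langle\lambda_1,\alpha^\vee\rangle\,\langle\lambda_2,\alpha^\vee\rangle=-1,
$$
which is the claim. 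I do not foresee any real obstacle: the argument is essentially arithmetic, and the only delicate point is making the normalization of $(\,,\,)$ explicit.
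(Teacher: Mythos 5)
Your proof is correct and follows essentially the same route as the paper: the paper's proof says only that $(\lambda_i,\alpha)\in\{-1,0,1\}$ (implicitly fixing $(\alpha,\alpha)=2$) and "the result follows by an easy computation," and your argument simply spells out that easy computation, including the point -- glossed over in the paper -- that the normalization of the $W$-invariant form must be fixed so that $(\lambda,\alpha)=\langle\lambda,\alpha^\vee\rangle$ for the distance to drop by exactly $1$.
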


\begin{proo}
For $P_i$ cominuscule and $G$ simply laced, we have $(\lambda_i,\alpha)\in\{-1,0,1\}$. The result follows from this by an easy computation. 
\end{proo}

\begin{coro}
Let $d=d(\lambda_1,\lambda_2)$.

(\i) There exists a sequence $(\gamma_i)_{i\in[1,d]}$ or roots such that if $(\mu_i)_{i\in[0,d]}$ is defined by $\mu_d=\lambda_2$ and $\mu_{i-1}=s_{\gamma_i}(\mu_i)$, then $d(\lambda_1,\mu_i)=i$.

(\i\i) The roots $(\gamma_i)_{i\in[1,d]}$ are mutually orthogonal and satisfy $(\lambda_1,\gamma_i)(\lambda_2,\gamma_i)<0$ for all $i\in[1,d]$.
\end{coro}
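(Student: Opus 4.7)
The plan is to prove (i) and (ii) simultaneously by induction on $d = d(\lambda_1, \lambda_2)$. The base case $d = 0$ is vacuous. For the inductive step, the point is not merely to find \emph{some} reflection that reduces the distance by one (which is immediate from the previous corollary and Lemma \ref{lemme-facile}) but to ensure the whole sequence of roots is pairwise orthogonal; once orthogonality holds, the reflections $s_{\gamma_{i+1}}, \ldots, s_{\gamma_d}$ fix $\gamma_i$, so $(\mu_i, \gamma_i) = (\lambda_2, \gamma_i)$ and the sign condition in (ii), phrased against the original $\lambda_2$, transfers from the intermediate weights.

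By $W$-invariance of the distance we may conjugate to assume $\lambda_1 = \varpi_{P_1}$; the characterization of distance zero then forces $\mu_0 = \varpi_{P_2}$. The existence of an orthogonal sequence as in (i) is therefore equivalent to producing an orthogonal decomposition
\[
\varpi_{P_2} - \lambda_2 \;=\; \gamma_1 + \gamma_2 + \cdots + \gamma_d
\]
into pairwise orthogonal positive roots. Pairing such a decomposition with $\gamma_i$ gives $(\varpi_{P_2}, \gamma_i) - (\lambda_2, \gamma_i) = (\gamma_i, \gamma_i) = 2$; by minusculity both quantities lie in $\{-1,0,1\}$, so necessarily $(\varpi_{P_2}, \gamma_i) = 1$ and $(\lambda_2, \gamma_i) = -1$. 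Pairing with $\varpi_{P_1}$ gives $d = \sum_i (\varpi_{P_1}, \gamma_i)$ with each term in $\{0, 1\}$, forcing $(\varpi_{P_1}, \gamma_i) = 1$ for all $i$. Hence any ordering of the $\gamma_i$'s yields sequences satisfying (i) and (ii): commutativity of the reflections (using simply-lacedness so that $\gamma_i^\vee = \gamma_i$) gives $\mu_{i-1} = s_{\gamma_i}(\mu_i) = \mu_i + \gamma_i$, iterated application of Lemma \ref{lemme-facile} gives $d(\lambda_1, \mu_i) = i$, and the sign condition reads $(\varpi_{P_1}, \gamma_i)(\lambda_2, \gamma_i) = -1$.

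The real content is thus to establish the orthogonal decomposition, and this is the \emph{main obstacle}. Proceed by a second induction on $d$: the preceding corollary provides a positive root $\gamma$ with $(\varpi_{P_1}, \gamma) = 1$ and $(\lambda_2, \gamma) = -1$, and the induction applied to $\lambda_2' = s_\gamma(\lambda_2) = \lambda_2 + \gamma$ produces an orthogonal decomposition $\varpi_{P_2} - \lambda_2' = \gamma_1' + \cdots + \gamma_{d-1}'$. The task is to make a choice of $\gamma$ that is automatically orthogonal to all $\gamma_i'$. For Grassmannians this is straightforward: writing $\lambda_1 = e_{I_1}$, $\lambda_2 = e_{I_2}$, any $\gamma = e_a - e_b$ with $a \in I_1 \setminus I_2$ and $b \in I_2 \setminus I_1$ works, since the inductive roots use only indices from $(I_1 \setminus I_2) \setminus \{a\}$ and $(I_2 \setminus I_1) \setminus \{b\}$, disjoint from $\{a, b\}$. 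The remaining cominuscule simply-laced cases (orthogonal Grassmannians, the Cayley plane $E_6/P_6$ and the Freudenthal variety $E_7/P_7$) admit analogous explicit combinatorial models of minuscule weights in which an ``index pairing'' argument yields the desired orthogonality; this case-by-case verification is the combinatorial heart of the corollary.
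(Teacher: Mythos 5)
Your proof is incomplete and also misses the key idea that makes the paper's argument uniform. You correctly identify that the hard part is the mutual orthogonality, correctly reduce (for $\lambda_1 = \varpi_{P_1}$) to producing an orthogonal decomposition $\varpi_{P_2} - \lambda_2 = \gamma_1 + \cdots + \gamma_d$ into pairwise orthogonal roots, and correctly deduce all the sign normalizations $(\varpi_{P_1},\gamma_i)=1$, $(\lambda_2,\gamma_i)=-1$ from pairing with $\varpi_{P_1}$ and $\gamma_i$. But then you declare the existence of the decomposition to be "the combinatorial heart" requiring a case-by-case verification, complete only the type $A$ Grassmannian case, and wave at the remaining cominuscule cases (orthogonal Grassmannians, quadrics, $E_6/P_6$, $E_7/P_7$, and all mixed pairs $P_1\neq P_2$) with an appeal to unstated "analogous explicit combinatorial models." That is a genuine gap: the result is not proved in any of those cases.

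Moreover, the premise driving your detour — that one cannot simply pick "some reflection" at each step because the resulting roots might fail to be orthogonal — is exactly what the paper shows to be unfounded. The paper's proof of (i) is the naive greedy induction: at each step apply the preceding corollary to find a root $\gamma_k$ with $(\lambda_1,\gamma_k)(\mu_k,\gamma_k)<0$ and set $\mu_{k-1}=s_{\gamma_k}(\mu_k)$. The proof of (ii) then shows that \emph{any} such greedy sequence is automatically orthogonal, with no choice and no case analysis. After normalizing $(\lambda_1,\gamma_k)=-1$, $(\mu_k,\gamma_k)=1$, one has $\mu_j = \mu_i + \sum_{k=i+1}^{j}\gamma_k$, hence by induction on $j-i$
\[
1 \;\geq\; (\gamma_i,\mu_j) \;=\; (\gamma_i,\mu_i) + (\gamma_i,\gamma_j) \;=\; 1 + (\gamma_i,\gamma_j),
\]
so $(\gamma_i,\gamma_j)\leq 0$; and if $(\gamma_i,\gamma_j)=-1$ then $\gamma_i+\gamma_j$ is a root with $(\lambda_1,\gamma_i+\gamma_j)=-2$, impossible for a minuscule weight. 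All that is used is $(\lambda,\alpha)\in\{-1,0,1\}$, i.e.\ precisely the cominuscule/simply-laced hypothesis stated in the section. You should incorporate this observation: it eliminates both the need for the careful choice of $\gamma$ and the case analysis, and yields a complete, type-free proof.
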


\begin{proo}
(\i) We proceed by induction on $d$. By the former corollary, if $d>0$, there exists a root $\a$ with $(\lambda_1,\a)(\lambda_2,\a)<0$. Set $\gamma_d=\a$ and $\mu_{d-1}=s_\a(\lambda_2)$, then $d(\lambda_1,\mu_{d-1})=d-1$. We conclude by induction.

(\i\i) Note that in the sequence $(\gamma_k)_{k\in[1,d]}$, we may replace $\gamma_k$ by its opposite. Therefore we may assume that $(\lambda_1,\gamma_k)<0$ (and thus $(\mu_k,\gamma_k)>0$) for all $i\in[1,d]$.
We first prove by induction on $j-i$ the vanishing $(\gamma_i,\gamma_j)=0$ for all $i<j$. 
By induction assumption, we have
$$\mu_{i}=s_{\gamma_{i+1}}\cdots s_{\gamma_j}(\mu_j)=\mu_j-\sum_{k=i+1}^{j}(\gamma_k,\mu_k)\gamma_k=\mu_j-\sum_{k=i+1}^{j}\gamma_k.$$
We get, again using induction
$$1\geq(\gamma_i,\mu_j)=(\gamma_i,\mu_i)+\sum_{k=i+1}^{j}(\gamma_k,\mu_k)(\gamma_i,\gamma_k)=1+(\gamma_i,\gamma_j).$$
In particular we get $(\gamma_i,\gamma_j)\leq0$. If $(\gamma_i,\gamma_j)=-1$, then $\gamma_i+\gamma_j$ would be a root and we would have $(\lambda_1,\gamma_i+\gamma_j)\geq-1$. But $(\lambda_1,\gamma_i+\gamma_j)=-2$ a contradiction. The second condition easily follows.
\end{proo}

We can prove a converse of the above statement.

\begin{lemm}
If $(\gamma_i)_{i\in[1,d]}$ is a sequence  of 
mutually orthogonal roots such that for all $i\in[1,d]$, we have $(\lambda_1,\gamma_i)(\lambda_2,\gamma_i)<0$, then $d(\lambda_1,\lambda_2)\geq d$.
\end{lemm}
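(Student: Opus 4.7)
The plan is to prove this by iteratively applying Lemma \ref{lemme-facile} along a sequence of reflections, exploiting the mutual orthogonality of the $\gamma_i$. Concretely, set $\mu_0=\lambda_2$ and $\mu_k=s_{\gamma_k}(\mu_{k-1})$ for $k=1,\dots,d$. I want to show that at each step the hypothesis of Lemma \ref{lemme-facile} is satisfied, namely that $(\lambda_1,\gamma_k)(\mu_{k-1},\gamma_k)<0$.

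The key observation is that $s_{\gamma_j}(\gamma_k)=\gamma_k$ whenever $j\neq k$, since $(\gamma_j,\gamma_k)=0$ by hypothesis. Therefore, for any $k$,
\[
(\mu_{k-1},\gamma_k)=(s_{\gamma_{k-1}}\cdots s_{\gamma_1}(\lambda_2),\gamma_k)=(\lambda_2,s_{\gamma_1}\cdots s_{\gamma_{k-1}}(\gamma_k))=(\lambda_2,\gamma_k).
\]
The assumption $(\lambda_1,\gamma_k)(\lambda_2,\gamma_k)<0$ therefore becomes $(\lambda_1,\gamma_k)(\mu_{k-1},\gamma_k)<0$, as required. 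Note that $\mu_{k-1}$ remains in $\Pi_{\varpi_{P_2}}$, being the image of $\lambda_2$ under an element of $W$.

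Applying Lemma \ref{lemme-facile} at each step gives $d(\lambda_1,\mu_k)=d(\lambda_1,\mu_{k-1})-1$, so by immediate induction $d(\lambda_1,\mu_d)=d(\lambda_1,\lambda_2)-d$. Since distances are always nonnegative (by the earlier lemma bounding $d(\lambda_1,\lambda_2)$ below by $0$), we conclude $d(\lambda_1,\lambda_2)\geq d$. No real obstacle arises here; the only point deserving a word is the observation that orthogonality of the $\gamma_i$ makes the pairing $(\mu_{k-1},\gamma_k)$ independent of the preceding reflections, which is precisely what lets the iteration go through.
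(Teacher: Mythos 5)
Your proof is correct and follows essentially the same route as the paper: iterate the reflections $s_{\gamma_i}$, apply Lemma \ref{lemme-facile} at each step, and use nonnegativity of the distance. You spell out a detail the paper leaves implicit — that the mutual orthogonality of the $\gamma_i$ forces $(\mu_{k-1},\gamma_k)=(\lambda_2,\gamma_k)$, which is exactly what lets Lemma \ref{lemme-facile} apply at every step — but the underlying argument is identical.
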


\begin{proo}
Define the sequence $(\mu_i)_{i\in[0,d]}$ of weights as above: $\mu_d=\lambda_2$ and $\mu_{i-1}=s_{\gamma_i}(\mu_i)$. We have $d(\lambda_1,\mu_{i+1})=d(\lambda_1,\mu_i)-1$ for all $i$, the result follows.
\end{proo}

\begin{coro}
\label{coro-dist}
The distance $d(\lambda_1,\lambda_2)$ is the maximal length of sequences $(\gamma_i)_{i\in[1,d]}$ of mutually orthogonal roots satisfying $(\lambda_1,\gamma_i)(\lambda_2,\gamma_i)<0$ for all $i\in[1,d]$.
\end{coro}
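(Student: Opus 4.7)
The plan is straightforward: the statement is a direct packaging of the two preceding results into a single variational formula, so I would just assemble the two inequalities.

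For the inequality $d(\lambda_1,\lambda_2) \leq \max$, I would apply the preceding Corollary with $d = d(\lambda_1,\lambda_2)$: this produces an explicit sequence $(\gamma_i)_{i\in[1,d]}$ of mutually orthogonal roots satisfying $(\lambda_1,\gamma_i)(\lambda_2,\gamma_i) < 0$ for each $i$, so the supremum on the right-hand side is at least $d(\lambda_1,\lambda_2)$. For the reverse inequality $d(\lambda_1,\lambda_2) \geq \max$, I would invoke the preceding Lemma: any sequence $(\gamma_i)_{i\in[1,d']}$ of mutually orthogonal roots with $(\lambda_1,\gamma_i)(\lambda_2,\gamma_i) < 0$ forces $d(\lambda_1,\lambda_2) \geq d'$, so no such sequence exceeds the distance in length.

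Combining both inequalities yields the claimed equality. Since the proof reduces to quoting the two preceding statements as black boxes, there is no genuine obstacle here; the only tiny sanity check is the degenerate case $d(\lambda_1,\lambda_2) = 0$, where the earlier Lemma characterizing $d = 0$ by $\lambda_1, \lambda_2$ lying in a common chamber guarantees no root $\gamma$ satisfies $(\lambda_1,\gamma)(\lambda_2,\gamma) < 0$, so the maximum is also $0$ and the formula holds vacuously. The real purpose of the corollary is presentational: it repackages the existence and upper-bound lemmas into the compact combinatorial description of the distance that will be used in the next subsection to match $d(\lambda_1,\lambda_2)$ against the rank of the corresponding $B$-orbit and thereby rule out type-$N$ edges.
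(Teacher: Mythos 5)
Your proof is correct and matches the (implicit) reasoning in the paper: the corollary is stated without proof precisely because it is the immediate conjunction of the preceding corollary (giving a length-$d$ sequence) and the preceding lemma (bounding any such sequence's length by $d$). Nothing to add.
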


\subsection{Connection with the rank}

Let $B(X)$ be the set of $B$-orbits in $X=G/P_1\times G/P_2$. We
define a map $\Phi:B(X)\to W^{P_1}\times W^{P_2}$ as follows. Let $\mathcal{O}$ be a $B$-orbit in $G/P_1\times G/P_2$. Then the images of ${\mathcal{O}}$ in $G/P_1$ and in $G/P_2$ are Schubert cells $\Omega_u$ and $\Omega_v$ with $(u,v)\in W^{P_1}\times W^{P_2}$. We put 
$$\Phi({\mathcal{O}})=(u,v).$$

\begin{rema}
We defined the distance on the pairs of weights in $\Pi_1\times\Pi_2$.
We extend this definition to $W^{P_1}\times W^{P_2}$ by setting
$d(u,v)=d(u(\varpi_{P_1}),v(\varpi_{P_2})).$
\end{rema}

\begin{lemm}
\label{lemm-ineq}
Let ${\mathcal{O}},\mathcal{O'}\in B(X)$ with ${\mathcal{O}}\leq{\mathcal{O}}'$ for the weak order. Let $(u,v)=\Phi(\mathcal{O})$ and $(u',v')=\Phi(\mathcal{O}')$. We have $d(u,v)-d(u',v')\leq \rank({\mathcal{O}}')-\rank(\mathcal{O})$.
\end{lemm}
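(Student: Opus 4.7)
The plan is to reduce to a single cover in the weak order, compute the change in $(u,v)$ and $d(u,v)$, and observe that $d(u,v)-d(u',v')$ is positive only in a ``mixed-sign'' case where the geometry forces the cover type to contribute rank $+1$.

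Since both sides of the inequality telescope along chains of covers, I would first reduce to a cover $\mathcal{O}\lessdot\mathcal{O}'$ raised by some minimal parabolic $P_\gamma$, $\gamma$ a simple root. Because $\pi_i\colon X\to G/P_i$ is proper and $P_\gamma$-equivariant and $\mathcal{O}'$ is dense in $P_\gamma\mathcal{O}$, we have $\overline{\pi_i(\mathcal{O}')}=\overline{P_\gamma\Omega_u}$, so $\pi_i(\mathcal{O}')$ is the unique dense Schubert cell of $P_\gamma\Omega_u$. Writing $\lambda_1=u(\varpi_{P_1})$, $\lambda_2=v(\varpi_{P_2})$, and using simply-laced plus cominuscule to get $(\gamma,\lambda_i)\in\{-1,0,1\}$, this gives the rule: $u'=u_\gamma$ (the element of $W^{P_1}$ with $u_\gamma(\varpi_{P_1})=s_\gamma\lambda_1$, so $l(u_\gamma)=l(u)+1$) if $(\gamma,\lambda_1)>0$, and $u'=u$ otherwise; analogously for $v'$.

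The distance change is then computed from $W$-invariance of $d$ and the identity $d(s_\gamma\mu,\nu)-d(\mu,\nu)=(\mu,\gamma)(\nu,\gamma)$. If neither of $u,v$ moves the change is trivially $0$; if both move (so both $(\lambda_i,\gamma)>0$) the change is $0$ by $W$-invariance applied to simultaneous $s_\gamma$. If exactly one moves, say $u'=u_\gamma$ with $(\lambda_1,\gamma)=1$ and $(\lambda_2,\gamma)\leq 0$, the change equals $(\lambda_1,\gamma)(\lambda_2,\gamma)\in\{0,-1\}$. Hence $d(u,v)-d(u',v')\in\{0,1\}$, equal to $1$ exactly when $(\lambda_1,\gamma)$ and $(\lambda_2,\gamma)$ are nonzero of opposite signs. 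When $d(u,v)-d(u',v')=0$, the inequality is immediate because rank is non-decreasing along covers by Lemma~\ref{lemm-edges}.

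The substantive step, and the main obstacle, is to show that in the mixed-sign case the cover cannot be of type (U), so that Lemma~\ref{lemm-edges} gives $\rank(\mathcal{O}')-\rank(\mathcal{O})=1$. The geometric intuition is that the $P_\gamma$-orbit of a generic point of $\mathcal{O}$ is a $(1,1)$-curve inside the product of two $P_\gamma$-lines $\mathbb{P}^1\times\mathbb{P}^1\subset(P_\gamma\Omega_u)\times(P_\gamma\Omega_v)$, whose two $T$-fixed endpoints $(uP_1,vP_2)$ and $(u_\gamma P_1,v^-P_2)$ sit at antidiagonal corners; closing up the $P_\gamma$-orbit introduces at least one more $B$-orbit in $\overline{P_\gamma\mathcal{O}}$ corresponding to a further corner, producing type (T). Making this rigorous amounts to a careful local analysis of the diagonal $B$-action on $\mathbb{P}^1\times\mathbb{P}^1$ with characters of opposite sign on $\gamma$, and is exactly the point that breaks in the non-simply-laced counterexample of Section~5.
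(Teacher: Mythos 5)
Your reduction to a single cover, the computation of how $(u,v)$ transforms (via properness of the projections and the fact that $\pi_i(\mathcal{O}')$ must be the dense cell of $P_\gamma\Omega_u$), and the formula $d(s_\gamma\mu,\nu)-d(\mu,\nu)=(\mu,\gamma)(\nu,\gamma)$ together with the observation that $d$ drops by $1$ exactly in the mixed-sign case, all match the paper's argument. The problem is that you stop at precisely the substantive step: you correctly isolate ``mixed sign $\Rightarrow$ not type (U)'' as the thing to prove, but you only offer geometric intuition and explicitly concede that ``making this rigorous'' is left undone. That is a genuine gap, not a detail.

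The missing argument is short and does not require any local analysis of $B$ acting on $\mathbb{P}^1\times\mathbb{P}^1$. Suppose $(\lambda_1,\gamma)=1$ and $(\lambda_2,\gamma)=-1$ (the other mixed case is symmetric). Since $vP_2$ is a $T$-fixed point of $\Omega_v=\pi_2(\mathcal{O})$, there is a point $(x,y)\in\mathcal{O}$ with $y=vP_2$. Then $s_\gamma\cdot(x,y)\in P_\gamma\mathcal{O}$, and $\pi_2(s_\gamma\cdot(x,y))=s_\gamma vP_2$ lies in the strictly smaller cell $\Omega_{s_\gamma v}$ because $(\lambda_2,\gamma)<0$. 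But $\pi_2(\mathcal{O})=\pi_2(\mathcal{O}')=\Omega_v$ (the latter because $(\lambda_2,\gamma)<0$ forces $v'=v$), so the $B$-orbit of $s_\gamma\cdot(x,y)$ is a third $B$-orbit inside $P_\gamma\mathcal{O}$, distinct from $\mathcal{O}$ and $\mathcal{O}'$. By Lemma \ref{lemm-edges} this is case (T), hence $\rank(\mathcal{O}')=\rank(\mathcal{O})+1$. This is exactly the paper's proof of the claim, and with it your proposal closes. (One more small point: this step is not where simple-lacedness ``breaks''; that hypothesis enters earlier, through Lemma \ref{lemme-facile}, to guarantee $(\lambda_i,\gamma)\in\{-1,0,1\}$. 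The three-orbit argument above is general.)
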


\begin{proo}
Choose a sequence $(P_{\gamma_i})_{i\in[1,r]}$ of minimal parabolic subgroups raising ${\mathcal{O}}$ to ${\mathcal{O}}'$. Here $\gamma_i$ for $i \in [1,r]$ denotes the simple root whose opposite is a root of $P_{\gamma_i}$. Let us write ${\mathcal{O}}_i$ for the dense $B$-orbit in $P_{\gamma_i}\cdots P_{\gamma_1}{\mathcal{O}}$ and write $\Phi({\mathcal{O}}_i)=(u_i,v_i)$. We have the three possibilities: 
\begin{itemize}
\item if $(\gamma_{i+1},u_i(\varpi_{P_1}))=1$, then we have $u_{i+1} = s_{\gamma_{i+1}}u_i$ and $u_{i+1}(\varpi_{P_1}) = s_{\gamma_{i+1}}u_i(\varpi_{P_1}) = u_i(\varpi_{P_1})-\gamma_{i+1}$,
\item if $(\gamma_{i+1},u_i(\varpi_{P_1})) = 0$, then we have $u_{i+1} = u_i$ and $u_{i+1}(\varpi_{P_1}) = u_i(\varpi_{P_1}) = s_{\gamma_{i+1}}u_i(\varpi_{P_1})$,
\item if $(\gamma_{i+1},u_i(\varpi_{P_1}))=-1$, then we have $u_{i+1}=u_i$ and $u_{i+1}(\varpi_{P_1})=u_i(\varpi_{P_1})$.
\end{itemize}
The same possibilities occur for $v_i$. There are only two cases for which we have $d(u_{i+1},v_{i+1})\neq d(u_i,v_i)$, namely for $(\gamma_{i+1},u_i(\varpi_{P_1}))=1$ and $(\gamma_{i+1},v_i(\varpi_{P_2}))=-1$ and for $(\gamma_{i+1},u_i(\varpi_{P_1}))=-1$ and $(\gamma_{i+1},v_i(\varpi_{P_2}))=1$. In both cases we have $d(u_{i+1},v_{i+1})=d(u_{i},v_{i})-1$ by Lemma \ref{lemme-facile}. 

We claim that the following inequality holds
$$\rank({\mathcal{O}}_{i+1})- \rank({\mathcal{O}}_{i})\geq d(u_i,v_i)-d(u_{i+1},v_{i+1}).$$
Since $\rank({\mathcal{O}}_{i+1})\geq \rank({\mathcal{O}}_{i})$ this is clear in all cases where $d(u_{i+1},v_{i+1})=d(u_{i},v_{i})$. The last two cases are symmetric, we only treat the first one. Remark that the orbit ${\mathcal{O}}_{i+1}=P_{\gamma_{i+1}}{\mathcal{O}}_i$ contains the orbit ${\mathcal{O}}_i$ and another orbit. Indeed, if $y$ is the $T$-fixed element in $\Omega_{v_i}$, then there exists an element of the form $(x,y)$ in ${\mathcal{O}}_i$. The element $s_{\gamma_{i+1}}(x,y)$ is in ${\mathcal{O}}_{i+1}$ and $s_{\gamma_{i+1}}(y)$ is a $T$-fixed point different from $y$. Since the image by the second projection to $G/P_2$ of ${\mathcal{O}}_i$ and ${\mathcal{O}}_{i+1}$ is $\Omega_{v_i}$ which does not contain $s_{\gamma_{i+1}}(y)$ there is a third orbit ${\mathcal{O}}'_i$ contained in ${\mathcal{O}}_{i+1}$ and containing $s_{\gamma_{i+1}}(y)$. In particular $\rank({\mathcal{O}}_{i+1})=\rank({\mathcal{O}}_i)+1$. The claim is proved.

Summing up we get the desired inequality.
\end{proo}

\begin{prop}
\label{prop-ineq}
Assume that $P_1$ and $P_2$ are opposite and $w$ is the longest
element. We have the inequality
$d(\varpi_{P_1},w(\varpi_{P_2}))\geq\rank(X)$.
\end{prop}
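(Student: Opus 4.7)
The plan is to show that both sides of the inequality equal the same combinatorial invariant $\rho$, namely the maximum length of a mutually orthogonal sequence $(\gamma_i)$ of roots with $(\varpi_{P_1},\gamma_i)=1$ (the ``cominuscule rank'' of $P_1$).

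First, I identify the left-hand side with $\rho$. Since $(P_1,P_2)$ is opposite we have $w_0(\chi_{P_2})=-\chi_{P_1}$, and passing to weights $w_0(\varpi_{P_2})=-\varpi_{P_1}$, so $d(\varpi_{P_1},w(\varpi_{P_2}))=d(\varpi_{P_1},-\varpi_{P_1})$. Applying Corollary \ref{coro-dist} with $\lambda_1=\varpi_{P_1}$ and $\lambda_2=-\varpi_{P_1}$, the defining condition $(\lambda_1,\gamma_i)(\lambda_2,\gamma_i)<0$ becomes $(\varpi_{P_1},\gamma_i)\neq 0$. Since $G$ is simply laced and $P_1$ is cominuscule, $(\varpi_{P_1},\gamma)\in\{-1,0,1\}$ for every root, so by replacing some $\gamma_i$ by $-\gamma_i$ we may assume $(\varpi_{P_1},\gamma_i)=1$ throughout. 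This gives $d(\varpi_{P_1},w(\varpi_{P_2}))=\rho$.

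Second, I identify $\rank(X)$ with $\rho$ using the symmetric-space structure of the dense $G$-orbit. Since $(P_1,P_2)$ is opposite, $H=P_1\cap P_2^{w_0}=L_{P_1}$, and by the proof of Lemma \ref{lemm-closed}, $G/L_{P_1}$ is an affine symmetric variety for the involution $\sigma=\mathrm{Ad}(\chi_{P_1}(-1))$. Hence $\rank(X)=\rank(G/L_{P_1})$, which by the standard theory of affine symmetric varieties equals the dimension of a Cartan subspace $\mathfrak{a}\subset\mathfrak{p}:=\g^{-\sigma}$. The cominuscule assumption gives $\mathfrak{p}=\mathfrak{u}_{P_1}\oplus\mathfrak{u}_{P_1}^-$, and a Cartan subspace of maximal dimension is spanned by vectors $e_{\gamma_i}+e_{-\gamma_i}$ for a maximal mutually strongly orthogonal family of roots $(\gamma_i)$ with root space in $\mathfrak{u}_{P_1}$ (i.e.\ $(\varpi_{P_1},\gamma_i)=1$). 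In the simply laced case orthogonal roots are automatically strongly orthogonal---since all roots have the same squared norm $|\gamma|^2$ and $|\gamma_i\pm\gamma_j|^2=2|\gamma_i|^2\neq|\gamma_i|^2$---so this cardinality is exactly $\rho$, yielding $\rank(X)=\rho$.

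Combining gives $d(\varpi_{P_1},w(\varpi_{P_2}))=\rho=\rank(X)$, which proves (in fact with equality) the inequality of the proposition; the reverse inequality $d\leq\rank(X)$ also follows from Lemma \ref{lemm-ineq} applied to the chain from $\mathcal{O}_0=B\cdot(P_1,w_0P_2)$ of rank $0$ and distance $\rho$ up to the dense $B$-orbit of rank $\rank(X)$ and distance $0$. The main obstacle is step two's reliance on the Cartan-subspace description of the rank of an affine symmetric variety; a fully self-contained alternative would be a case-by-case verification using the classification of simply laced cominuscule parabolics, computing both $\rho$ and $\rank(G/L_{P_1})$ explicitly in each case (Grassmannians, even orthogonal spinor varieties, and the two exceptional cominuscule varieties in types $E_6$ and $E_7$).
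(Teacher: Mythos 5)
Your proof is correct, but it takes a genuinely different route from the paper. Both approaches start from the same reduction via Corollary \ref{coro-dist}, interpreting the distance $d(\varpi_{P_1},w(\varpi_{P_2}))=d(\varpi_{P_1},-\varpi_{P_1})$ as the maximal length $\rho$ of a mutually orthogonal family of roots $\gamma_i$ with $(\varpi_{P_1},\gamma_i)\neq 0$. Where you diverge is in controlling $\rank(X)$. The paper stays self-contained and elementary: it trivializes the fibration $G/H\to G/P_1$ to identify $\rank(X)$ with the rank of $U_{P_2}$ as an $L_2$-variety under conjugation, then explicitly builds the ``cascade'' of highest roots $\theta_1,\theta_2,\dots,\theta_s$ in successive orthogonal-complement subsystems (Lemmas \ref{lemm-1}--\ref{lemm-4}) to gauge-fix a general point of $U_{P_2}$ into $\prod U_{\theta_i}$, yielding $\rank(X)\le s$; since that same cascade is an orthogonal family with the right sign pattern, Corollary \ref{coro-dist} gives $d\ge s$, and the two bounds combine. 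You instead observe that $H=P_1\cap P_2^{w_0}=L_{P_1}$ is the fixed-point Levi of the involution $\sigma=\mathrm{Ad}(\chi_{P_1}(-1))$, so $G/H$ is an affine symmetric variety whose rank equals the dimension of a Cartan subspace of $\mathfrak p=\mathfrak u_{P_1}\oplus\mathfrak u_{P_1}^-$, and in simply laced type orthogonality of roots implies strong orthogonality, so that dimension is exactly $\rho$. This buys you the stronger conclusion $d=\rank(X)$ directly (which the paper only establishes afterwards, in the Theorem following this Proposition), at the price of invoking the Cartan-subspace theory of symmetric spaces as a black box rather than deriving the bound from scratch. Both proofs ultimately hinge on the same family of mutually orthogonal roots in $\mathfrak u_{P_1}$; the paper constructs it explicitly, you quote its existence and maximality. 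One small remark: the closing observation that $d\le\rank(X)$ follows from Lemma \ref{lemm-ineq} is extraneous for the stated inequality, though correct (the $T$-fixed orbit $B\cdot(P_1,w_0P_2)$ is a single $U$-orbit, hence of rank~$0$, with $\Phi$-value $(1,w_{P_2})$).
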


\begin{proo}
Consider the dense $G$-orbit $G/H$ with $H = P_1 \cap P_2^{w_0}$ in $X$. This is the orbit of $([w_{P_1}(\varpi_{P_1})],[\varpi_{P_2}])$. We have a surjective morphism $p_1:G/H\to G/P_1$ and we consider the fiber of $[w_{P_1}(\varpi_{P_1})]$ which is isomorphic to $P_1^{w_{P_1}}\cdot[\varpi_{P_2}]\simeq P_1^{w_{P_1}}/P_1^{w_{P_1}}\cap P_2\simeq P_2^-/P_2^-\cap P_2\simeq L_2 U_{P_2}/L_2$ where $U_{P_2}$ is the unipotent radical of $P_2$ and $L_2$ is the Levi subgroup containing $T$. We have a trivialisation of the morphism $p_1:G/H\to G/P_1$ over the open subset $U_{P_1}\cdot[w_{P_1}(\varpi_{P_1})]\simeq U_{P_1}$ and therefore an open $B$-stable subset of $X$ isomorphic to
$$U_{P_1}\times L_2U_{P_2}/L_2.$$
The rank of $X$ as a $G$-variety is therefore the rank of $L_2U_{P_2}/L_2\simeq U_{P_2}$ as an $L_2$-variety. The action on $U_{P_2}$ is by conjugation. To compute the rank we want to compute the dimension of the quotient $U_{P_2}/U$ where $U$ is the opposite maximal unipotent of $L_2$. 

Let us note that $U_{P_2}$ is a vector space direct sum of the $U_\a$ for $(\a,\varpi_{P_2})=1$. The action of $U_\beta\subset U$ on $U_{P_2}$ induces a morphism $U_\beta\times U_\a\to U_\a\times U_{\a+\beta}$ defined by $(b,a)\mapsto (a,c_{\a,\beta}ab)$ for some constant $c_{\a,\b}$ (non vanishing if $\a+\b$ is a root, see \cite[Proposition 8.2.3]{springer}). We construct subspaces of $U_{P_2}$ stable for the action of $U$.

We define a sequence $(R_i,\theta_i)_{i\in[1,r]}$ of pairs consisting of a root system $R_i$ and a root $\theta_i\in R_i$ by induction. Let $R_1=R$ be the root system of $G$ and let $\theta_1$ be the highest root of $R_1$. Define $R_{i+1}$ as the root system of all roots orthogonal to $\theta_i$ and $\theta_{i+1}$ be the highest root in $R_{i+1}$.

\begin{lemm}
\label{lemm-1}
Let $\a=\theta_i-\beta$ with $\a$ and $\b$ two roots. Then $(\theta_i,\a)=(\theta_i,\b)=1$. Conversely, for $\a$ a root, if $(\a,\theta_i)=1$, then $\b=\theta-\a$ is a root.
\end{lemm}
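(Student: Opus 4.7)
The plan is to read the statement as a direct root-system calculation in the simply-laced setting. Since $G$ is simply laced I can normalize the $W$-invariant form $(\ ,\ )$ so that $(\gamma,\gamma)=2$ for every root $\gamma\in R$; this applies in particular to $\theta_i\in R_i\subseteq R$.

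For the forward implication, given roots $\alpha,\beta$ with $\alpha+\beta=\theta_i$, I would compute
$$2=(\theta_i,\theta_i)=(\alpha,\alpha)+2(\alpha,\beta)+(\beta,\beta)=4+2(\alpha,\beta),$$
which forces $(\alpha,\beta)=-1$. Expanding $(\theta_i,\alpha)=(\alpha+\beta,\alpha)$ then yields $(\theta_i,\alpha)=2-1=1$, and symmetrically $(\theta_i,\beta)=1$.

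For the converse, I would invoke the standard $\alpha$-string argument. Given $\alpha\in R$ with $(\alpha,\theta_i)=1$, the set of integers $k$ for which $\theta_i+k\alpha$ is a root forms an interval $[-q,p]\subseteq\Z$ with $p,q\geq 0$, and the length-change formula gives
$$q-p=\frac{2(\theta_i,\alpha)}{(\alpha,\alpha)}=1.$$
Hence $q\geq 1$, which is precisely the statement that $\theta_i-\alpha\in R$.

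Neither step presents a real difficulty; the proof reduces to two short computations. The only place simply-lacedness is really used is in guaranteeing the uniform normalization $(\gamma,\gamma)=2$, so that both $(\alpha,\beta)=-1$ in the forward direction and $q-p=(\theta_i,\alpha)$ in the converse come out cleanly. This is consistent with the global role of the simply-laced hypothesis in the paper.
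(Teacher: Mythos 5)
Your forward computation is the same as the paper's, modulo rearrangement: you expand $(\theta_i,\theta_i)$ whereas the paper expands $(\beta,\beta)=(\theta_i-\alpha,\theta_i-\alpha)=4-2(\theta_i,\alpha)$, and both force $(\theta_i,\alpha)=1$ using the simply-laced normalization $(\gamma,\gamma)=2$. For the converse, the paper is more terse: it simply observes that $(\alpha,\theta_i)=1$ together with $(\alpha,\alpha)=2$ gives $s_\alpha(\theta_i)=\theta_i-\alpha$, which is a root because it is the Weyl image of a root. Your $\alpha$-string argument with $q-p=\frac{2(\theta_i,\alpha)}{(\alpha,\alpha)}=1$ and $p\geq 0$ forcing $q\geq 1$ is a correct and standard alternative that proves the same thing; it is slightly heavier machinery since the string formula is itself derived from the reflection acting on the string, but it buys nothing extra here. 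Both proofs are correct, and the distinction is cosmetic.
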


\begin{proo}
Since $\b$ is a root we have $2=(\b,\b)=4-2(\theta_i,\a)$ proving the first equality. A similar argument gives the second proof. For the converse write $\b=\theta_i-\a=s_\a(\theta_i)$.
\end{proo}

\begin{lemm}
\label{lemm-2}
The following conditions are equivalent
\begin{itemize}
\item $\a\in R$ and $\a\leq\theta_i$;
\item $\a\in R_i$.
\end{itemize}
\end{lemm}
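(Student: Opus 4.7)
The plan is to prove Lemma~\ref{lemm-2} by induction on $i$, the base case $i=1$ being immediate since $R_1=R$ and every root of $R$ lies below the highest root $\theta_1$ in the standard partial order. For the inductive step, I assume $R_j=\{\alpha\in R\mid\alpha\leq\theta_j\}$ for $j\leq i$ and derive the analogous statement for $R_{i+1}$.

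The direction $\alpha\in R_{i+1}\Rightarrow\alpha\leq\theta_{i+1}$ in $R$ is the easy one: picking a positive system of $R_{i+1}$ compatible with that of $R$, the simple roots of $R_{i+1}$ are positive roots of $R$, hence nonnegative combinations of simple roots of $R$; since $\theta_{i+1}$ is the highest root of $R_{i+1}$, the difference $\theta_{i+1}-\alpha$ is a nonnegative combination of simple roots of $R_{i+1}$, hence of $R$. Conversely, for $\alpha\in R$ with $\alpha\leq\theta_{i+1}$ in $R$, the inductive hypothesis applied to $\theta_{i+1}\in R_i$ gives $\theta_{i+1}\leq\theta_i$, whence $\alpha\leq\theta_i$ and $\alpha\in R_i$. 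It remains to show $(\alpha,\theta_i)=0$. Writing $\theta_{i+1}-\alpha=\sum_k c_k\alpha_k$ with $c_k\geq 0$ integers and $\alpha_k$ simple roots of $R$, and using $(\theta_{i+1},\theta_i)=0$, one obtains $(\alpha,\theta_i)=-\sum_k c_k(\alpha_k,\theta_i)$.

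The main obstacle is the vanishing of this sum, for which I would strengthen the induction to also prove the auxiliary claim that the simple roots of $R_i$ can be chosen among the simple roots of $R$, namely as those orthogonal to $\theta_1,\ldots,\theta_{i-1}$. Granted this claim at step $i$, write $\theta_{i+1}=\sum_\beta d_\beta\beta$ with $\beta$ ranging over the simple roots of $R_i$ (a subset of simple roots of $R$) and $d_\beta\geq 0$; since $\theta_i$ is the highest root of the simply-laced system $R_i$, it is dominant there, giving $(\beta,\theta_i)\geq 0$ for each such $\beta$. Combined with $0=(\theta_{i+1},\theta_i)=\sum_\beta d_\beta(\beta,\theta_i)$, this forces $d_\beta(\beta,\theta_i)=0$ termwise, so the support of $\theta_{i+1}$ consists of simple roots of $R$ orthogonal to $\theta_i$. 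The support of $\theta_{i+1}-\alpha$ is then contained in that of $\theta_{i+1}$ (for $\alpha$ positive), so each $\alpha_k$ contributing to the sum satisfies $(\alpha_k,\theta_i)=0$, and the vanishing follows. The same argument simultaneously identifies the simple roots of $R_{i+1}$ as those simple roots of $R_i$ orthogonal to $\theta_i$, which are precisely the simple roots of $R$ orthogonal to $\theta_1,\ldots,\theta_i$, closing the strengthened induction. Simply-lacedness enters crucially via $(\beta,\theta_i)\in\{-1,0,1\}$ and Lemma~\ref{lemm-1}, which would be used in finer analysis if the ``dominance'' shortcut were not enough, e.g., to rule out $(\beta,\theta_i)=-1$ by producing a root $\beta+\theta_i\in R_i$ exceeding $\theta_i$.
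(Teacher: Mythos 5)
Your proof is correct and takes essentially the same approach as the paper's: both hinge on showing (by induction, using that $\theta_k$ is dominant in $R_k$) that the simple roots of $R_i$ can be chosen among the simple roots of $R$—namely those orthogonal to $\theta_1,\dots,\theta_{i-1}$—so that $R_i$ is a standard Levi sub-root system and membership is read off from the support condition $\alpha\leq\theta_i$. (The paper's one-sentence version contains a small typo—it should say ``trivial coefficient on the simple roots of $R_k$ \emph{not} orthogonal to $\theta_k$''—and, like your write-up, implicitly restricts to positive $\alpha$, without which the stated equivalence fails for negative roots.)
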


\begin{proo}
The second condition implies obviously the first by definition of $\theta_i$. Conversely, note that for the root system $R_k$, the root $\theta_k$ is in the dominant chamber thus if $\gamma_1,\cdots,\gamma_r$ are the simple roots of $R_k$ orthogonal to $\theta_k$, then the roots of $R_{k+1}$ are exactly the roots with trivial coefficient on the simple roots $\gamma_1,\cdots,\gamma_r$. This in particular implies that if $\a\leq\theta_i$, then $\a\in R_i$.
\end{proo}

\begin{lemm}
\label{lemm-3}
We have $R_i\setminus R_{i+1}=\{\a\in R\ /\ \exists \textrm{ $\b$ positive root with }\a=\theta_i-\b\}.$
\end{lemm}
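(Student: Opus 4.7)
The plan is to prove the two inclusions, using Lemma~\ref{lemm-1} and Lemma~\ref{lemm-2} as the key tools.

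For the inclusion $\supseteq$, take $\alpha = \theta_i - \beta \in R$ with $\beta$ a positive root. Lemma~\ref{lemm-1} gives $(\alpha, \theta_i) = 1 \neq 0$, so $\alpha \notin R_{i+1}$. Since $\beta \geq 0$ in the dominance order on the root lattice, $\alpha = \theta_i - \beta \leq \theta_i$, and hence $\alpha \in R_i$ by Lemma~\ref{lemm-2}.

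For the converse inclusion $\subseteq$, take $\alpha \in R_i \setminus R_{i+1}$. Lemma~\ref{lemm-2} gives $\alpha \leq \theta_i$, and the definition of $R_{i+1}$ gives $(\alpha, \theta_i) \neq 0$. Since $\theta_i$ is the highest root of $R_i$, hence dominant in $R_i$, one has $(\alpha, \theta_i) \geq 0$ for positive roots $\alpha \in R_i$; combined with the simply-laced restriction $(\alpha, \theta_i) \in \{-2,-1,0,1,2\}$, this forces $(\alpha, \theta_i) \in \{1,2\}$, the value $2$ corresponding to the degenerate case $\alpha = \theta_i$ (with $\beta = 0$). In the main case $(\alpha, \theta_i) = 1$, the converse part of Lemma~\ref{lemm-1} produces a root $\beta = \theta_i - \alpha$; the inequalities $\alpha \leq \theta_i$ and $\alpha \neq \theta_i$ force $\beta$ to be a nonzero non-negative combination of simple roots, hence a positive root.

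There is no serious obstacle beyond the preceding two lemmas; the only subtle point is that the simply-laced hypothesis, together with dominance of $\theta_i$ in $R_i$, excludes the value $(\alpha, \theta_i) = -1$ for positive roots, for which the converse of Lemma~\ref{lemm-1} would fail because $\theta_i - \alpha$ would have squared length $6$ rather than $2$.
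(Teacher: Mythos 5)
Your proof is correct and follows essentially the same route as the paper: both directions rest on Lemma~\ref{lemm-1} (to control $(\alpha,\theta_i)$ and produce $\beta$) and Lemma~\ref{lemm-2} (to pass between $\alpha\leq\theta_i$ and $\alpha\in R_i$). The only real variation is how $\beta>0$ is deduced: you argue directly from $\alpha\leq\theta_i$ and $\alpha\neq\theta_i$, whereas the paper notes $\beta\in R_i$ and invokes maximality of $\theta_i$ in $R_i$; both are fine. One small caveat you flag but do not resolve: for $\alpha=\theta_i$ there is no \emph{positive root} $\beta$ with $\alpha=\theta_i-\beta$ (since $\beta=0$ is not a root), so $\theta_i$ lies in $R_i\setminus R_{i+1}$ but not in the right-hand set — this imprecision is also present in the paper's statement and proof (which silently assumes $\alpha$ is positive and $\alpha\neq\theta_i$), and is harmless for the subsequent application to $U(\theta_i)$, where $U_{\theta_i}$ is tracked separately.
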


\begin{proo}
  If $\a=\theta_i-\b$, then $\a\leq\theta_i$ and by Lemma
  \ref{lemm-2} $\a\in R_i$. Furthermore by Lemma \ref{lemm-1} we have
  $(\a,\theta_i)=1$ thus $\a\not\in R_{i+1}$ Conversely, if $\a\in
  R_i\setminus R_{i+1}$, we have $(\a,\theta_i)\neq0$ thus since
  $\theta_i$ is the highest root $(\a,\theta_i)=1$. By Lemma
  \ref{lemm-1} there is a root $\b$ with $\a=\theta_i-\b$ and $\b\in
  R_i$. Since $\theta_i$ is the highest root of $R_i$ we have $\b>0$.
\end{proo}

\begin{lemm}
\label{lemm-4}
Let $\a_i$ and $\a_j$ two roots of $U_{P_2}$ and assume that
  $\a_i=\theta_i-\b_i$ and $\a_j=\theta_j-\b_j$ for $\b_i$, $\b_j$
  positive roots.

(\i) If $i\neq j$. Then we have $\b_i\neq\b_j$.

(\i\i) If $i= j$. Then we have $(\b_i,\b_j)=0$.
\end{lemm}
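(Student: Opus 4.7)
The plan is to establish parts (i) and (ii) separately; both reduce to inner-product computations combining Lemma \ref{lemm-1} with the orthogonality built into the inductive definition of the pairs $(R_i,\theta_i)$.

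For part (i), I would assume without loss of generality that $i<j$. By the inductive construction $R_j\subseteq R_{i+1}$, and $R_{i+1}$ is by definition the set of roots orthogonal to $\theta_i$. Consequently both $\theta_j$ and $\alpha_j$ lie in $\theta_i^\perp$, so
\[
(\theta_i,\beta_j)=(\theta_i,\theta_j)-(\theta_i,\alpha_j)=0.
\]
On the other hand, Lemma \ref{lemm-1} applied to the decomposition $\alpha_i=\theta_i-\beta_i$ gives $(\theta_i,\beta_i)=1$. Therefore $\beta_i\neq\beta_j$.

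For part (ii) we have $\theta_i=\theta_j$. A direct expansion, using $(\theta_i,\alpha_i)=(\theta_i,\alpha_j)=1$ from Lemma \ref{lemm-1}, gives
\[
(\beta_i,\beta_j)=(\theta_i-\alpha_i,\theta_i-\alpha_j)=2-1-1+(\alpha_i,\alpha_j)=(\alpha_i,\alpha_j),
\]
so the problem reduces to showing $(\alpha_i,\alpha_j)=0$. Because $G$ is simply laced, distinct roots have inner product in $\{-1,0,1\}$. The value $-1$ can be excluded immediately: $P_2$ being cominuscule forces $U_{P_2}$ to be abelian, so the sum of two roots of $U_{P_2}$ is never a root of $G$, and hence $(\alpha_i,\alpha_j)\geq 0$.

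The main obstacle is ruling out the value $+1$. In that scenario $\alpha_i-\alpha_j$ would be a root, and since $(\alpha_i-\alpha_j,\varpi_{P_2})=0$ it would lie in the Levi $L_2$; moreover, as the difference of two roots of $R_i$ it would belong to $R_i\cap L_2$. To finish, I would exploit the explicit description of $R_i\setminus R_{i+1}$ provided by Lemma \ref{lemm-3} together with the positivity of both $\beta_i$ and $\beta_j$ and the length conditions $(\theta_i,\beta_i)=(\theta_i,\beta_j)=1$, using the simply-laced structure to show that no such root in $R_i\cap L_2$ can coexist with the prescribed decompositions $\alpha_i=\theta_i-\beta_i$, $\alpha_j=\theta_i-\beta_j$ with $\beta_i,\beta_j>0$, thereby forcing $(\alpha_i,\alpha_j)=0$.
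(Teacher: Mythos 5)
Your proof of part (i) is correct and in fact more direct than the paper's argument: computing $(\theta_i,\beta_j)=0$ (since $\alpha_j,\theta_j\in R_j\subseteq R_{i+1}=\theta_i^{\perp}$) and $(\theta_i,\beta_i)=1$ (Lemma \ref{lemm-1}) immediately gives $\beta_i\neq\beta_j$. The paper instead assumes $\beta_i=\beta_j$ and expands $(\beta,\beta)$, which involves the quantity $(\alpha_i,\theta_j)$; your route avoids this entirely.

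For part (ii) you are right that the argument reduces, after expanding $(\theta_i-\alpha_i,\theta_i-\alpha_j)$, to proving $(\alpha_i,\alpha_j)=0$; the paper makes exactly this reduction. But the paper then simply writes ``$+0$'' for $(\alpha_i,\alpha_j)$ without justification, and you are right to be suspicious: the equality $(\alpha_i,\alpha_j)=0$ is in fact \emph{false} in general, and the final step you were trying to supply cannot be carried out. Concretely, take $G$ of type $D_4$ and $P_2$ the (cominuscule, self-opposite) parabolic with $\varpi_{P_2}=\varpi_4=\tfrac12(e_1+e_2+e_3+e_4)$. Then $\theta_1=e_1+e_2\in U_{P_2}$, and the roots $\alpha_i=e_1+e_3$, $\alpha_j=e_1+e_4$ lie in $U_{P_2}$ with $\alpha_i=\theta_1-\beta_i$, $\alpha_j=\theta_1-\beta_j$, where $\beta_i=e_2-e_3$ and $\beta_j=e_2-e_4$ are positive roots. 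Here $(\alpha_i,\alpha_j)=1$ and $(\beta_i,\beta_j)=1\neq 0$. So Lemma \ref{lemm-4}(ii) as stated is too strong, and neither the paper's proof nor any completion of yours can establish it. What you \emph{did} prove, namely $(\alpha_i,\alpha_j)\geq 0$ and hence $(\beta_i,\beta_j)\geq 0$, is the correct statement and is exactly what the subsequent application in the proof of Proposition \ref{prop-ineq} actually needs: it guarantees that $\beta_i+\beta_j$ is not a root (its squared length would be at least $4$), so the subgroups $U_{-\beta_i}$ and $U_{-\beta_j}$ commute and the product defining $U_i$ is unambiguous. In short, your analysis correctly located a genuine error in the paper's lemma; the salvageable conclusion is the inequality $(\beta_i,\beta_j)\geq 0$, not the vanishing.
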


\begin{proo}
(\i) We may assume $i<j$. Assume further that $\b_i=\b_j=\b$ and recall
that we have $(\a_i,\theta_i)=1=(\a_j,\theta_j)$,
  $(\a_j,\theta_i)=(\theta_i,\theta_j)=0$. We may compute
$$(\b,\b)=(\theta_i-\a_i,\theta_j-\a_j)=(\a_i,\a_j)-(\a_i,\theta_j).$$
But since $\a_j$ is in $U_{P_2}$, the same is true for $\theta_j$ and
therefore $(\a_i,\theta_j)=0$. We get $2=(\a_i,\a_j)$ which would imply
$\a_i=\a_j$ a contradiction.

(\i\i) We have
$(\b_i,\b_j)
=(\theta_i,\theta_i)-(\theta_i,\a_j)-(\a_i,\theta_i)+(\a_i,\a_j)=2-1-1+0$.  
\end{proo}

We set $U(\theta_i)=\prod_{\a\in R_i\setminus R_{i+1},\ U_\a\subset
  U_{P_2}}U_\a\subset U_{P_2}$ for $i\in[1,s]$ with $s\leq r$ such
that $U(\theta_i)$ is not trivial for $i\leq s$. These are subspaces of
$U_{P_2}$. Note that for $\a,\b$ roots of $U_{P_2}$ we have $U_\a
U_\b=U_\b U_\a$ so that we do not have to take care of the order of
the product. We also define for $i\in[1,s]$
$$U_i=\prod_{\b\in R^+,\ \theta_i-\b\in R_i}U_{-\b}.$$
Note that by Lemma \ref{lemm-4}, the above $U_{-\b}$ commute so that
 we can take any order for this product.

\begin{lemm}
   In the $U_i$-orbit of a general element in $U_{P_2}$ there is a
   unique representative whose only non trivial coordinate in
   $U(\theta_i)$ lies in $U_{\theta_i}$.
\end{lemm}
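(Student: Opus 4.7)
The plan is to linearize the action of $U_i$ on $U_{P_2}$ using the cominuscule hypothesis and then show the resulting system of equations is diagonal linear, thanks to the orthogonality of the $\beta_k$'s and the simply-laced assumption.

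\textbf{Step 1 (Linearization).} Since $P_2$ is cominuscule, $U_{P_2}$ is abelian, so $\exp \colon \mathfrak{u}_{P_2} \to U_{P_2}$ is an isomorphism of affine spaces under which the conjugation action of $U_i$ on $U_{P_2}$ becomes the adjoint action on $\mathfrak{u}_{P_2}$. Write $\beta_1, \dots, \beta_m$ for the positive roots with $\theta_i - \beta_k \in R(U_{P_2})$, so $U_i = \prod_k U_{-\beta_k}$. By Lemma \ref{lemm-4}(ii) the $\beta_k$'s are pairwise orthogonal, hence $U_i$ is abelian and for $b = \exp(\sum_k t_k e_{-\beta_k})$ one has $\mathrm{Ad}(b) = \prod_k \exp(t_k A_k)$, where $A_k := \mathrm{ad}(e_{-\beta_k})|_{\mathfrak{u}_{P_2}}$.

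\textbf{Step 2 (The $A_k$ are square-zero).} For $\gamma \in R(U_{P_2})$, if $\gamma - 2\beta_k$ were a root, then the $\beta_k$-string through $\gamma$ would have length $\geq 3$; in a simply-laced system this forces $\gamma = \beta_k$, impossible as $(\gamma,\varpi_{P_2})=1 \neq 0=(\beta_k,\varpi_{P_2})$. Hence $A_k^2=0$, so $\exp(t_k A_k) = 1 + t_k A_k$, and writing $X = \sum_\gamma x_\gamma e_\gamma$ one gets
$$\mathrm{Ad}(b)(X) \;=\; \sum_{S \subseteq \{1,\dots,m\}} \Bigl(\prod_{k \in S} t_k\Bigr) \Bigl(\prod_{k \in S} A_k\Bigr) X.$$

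\textbf{Step 3 (Only $S \subseteq \{j\}$ contribute).} Fix $j$. The $\mathfrak{g}_{\theta_i-\beta_j}$-contribution of the $S$-term comes from $\gamma \in R(U_{P_2})$ with $\gamma = \theta_i - \beta_j + \sum_{k \in S}\beta_k$. Using $|\theta_i|^2=|\beta_k|^2=2$, $(\theta_i,\beta_k)=1$ (since $\theta_i-\beta_k$ is a root and $G$ is simply-laced) and $(\beta_k,\beta_{k'})=0$ for $k\neq k'$, a direct computation yields $|\gamma|^2 = 2 + 4\,|S \setminus \{j\}|$. Since every root in a simply-laced system has squared length $2$, $\gamma$ is a root only when $S \subseteq \{j\}$: either $S = \emptyset$ (giving the constant term $x_{\theta_i-\beta_j}$) or $S = \{j\}$ (giving $c_j\, t_j\, x_{\theta_i}$, with $c_j \neq 0$ a Chevalley structure constant since $\theta_i - \beta_j$ is a root).

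\textbf{Step 4 (Unique solution).} The $m$ equations saying the $\mathfrak{g}_{\theta_i-\beta_j}$-coefficient of $\mathrm{Ad}(b)(X)$ vanishes thus form the diagonal linear system $x_{\theta_i-\beta_j} + c_j\, t_j\, x_{\theta_i} = 0$ for $j = 1,\dots,m$. For a general $X$ we have $x_{\theta_i} \neq 0$, and the unique solution is $t_j = -x_{\theta_i-\beta_j}/(c_j\, x_{\theta_i})$, yielding the unique representative $\exp(\sum_k t_k e_{-\beta_k}) \cdot X$ in the $U_i$-orbit whose $U(\theta_i)$-coordinates outside $U_{\theta_i}$ all vanish. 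The main obstacle is Step 3: the collapse of the a priori polynomial action into a \emph{linear} system, which depends crucially on simply-lacedness combined with the orthogonality result of Lemma \ref{lemm-4}(ii).
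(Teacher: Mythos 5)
Your proof is correct and elaborates, in full rigor, the one-line argument the paper sketches (``choose an element with nontrivial coordinate in $U_{\theta_i}$; letting $U_i$ act we can kill all the other coordinates in $U(\theta_i)$ in a unique way''). The key point you establish --- that $A_k^2=0$ together with the simply-laced length count $|\gamma|^2 = 2 + 4\,|S\setminus\{j\}|$ collapses the a priori polynomial dependence of $\mathrm{Ad}(b)X$ on the parameters $t_j$ into a diagonal linear system with invertible coefficient $c_jx_{\theta_i}$ --- is exactly what is needed to justify ``in a unique way,'' and it rests on the same combinatorial input (orthogonality of the $\beta_k$'s from Lemma~\ref{lemm-4} and $(\theta_i,\beta_k)=1$) that the paper uses throughout this section.
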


\begin{proo}
  Indeed, choose an element with non trivial coordinate in
  $U_{\theta_i}$. Letting $U_i$ act we can kill all the other
  coordinates in $U(\theta_i)$ in a unique way.
\end{proo}
 
\begin{lemm}
   In the $U$-orbit of a general element in $U_{P_2}$ there is a
   representative whose only non trivial coordinates are in $\prod_{i=1}^s
   U_{\theta_i}$.
\end{lemm}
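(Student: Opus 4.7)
The plan is to proceed by induction on $i \in [0,s]$, constructing at each step an element $u_i \in \langle U_1, \ldots, U_i\rangle \subseteq U$ such that $u_i \cdot x$ has all its coordinates in $\bigcup_{j \leq i} U(\theta_j)$ concentrated in the product $\prod_{j \leq i} U_{\theta_j}$; the conclusion will then follow from the case $i = s$. The base case $i=0$ is trivial.

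The crucial ingredient needed to push the induction through is the commutation $[U_i, U_{\theta_j}] = 1$ for all $j < i$. To establish it, I first note that if $\beta \in R^+$ satisfies $\theta_i - \beta \in R_i$, then since $\theta_i \in R_i$ and $R_i$ can be characterized as the set of roots orthogonal to $\theta_1, \ldots, \theta_{i-1}$, the difference $\beta = \theta_i - (\theta_i - \beta)$ also lies in $R_i$; in particular $(\beta, \theta_j) = 0$ for every $j < i$. Since $G$ is simply laced and any two orthogonal roots $\gamma, \delta$ whose sum or difference were a root would give $|\gamma \pm \delta|^2 = 4$, the element $\theta_j - \beta$ cannot be a root, so $U_{-\beta}$ commutes with $U_{\theta_j}$. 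As $U_i$ is generated by such $U_{-\beta}$, the commutation follows.

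With this in hand, the inductive step is direct. Given $u_{i-1}$ with the desired property, the element $u_{i-1} \cdot x$ is still general (genericity is a Zariski-open condition preserved by unipotent actions), so the previous lemma applied to the group $U_i$ yields $v_i \in U_i$ such that $v_i u_{i-1} \cdot x$ has its coordinate in $U(\theta_i)$ contained in $U_{\theta_i}$. The commutation ensures that conjugation by $v_i$ fixes the already-normalized coordinates in $U_{\theta_j}$ for $j < i$, so $u_i := v_i u_{i-1}$ satisfies the desired property.

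The only real obstacle is the commutation statement, whose proof ultimately depends on the simply-laced hypothesis on $G$; once that is in place the inductive bookkeeping is routine, and nothing beyond the previous lemma and the orthogonality structure of the sequence $(\theta_i)$ is needed.
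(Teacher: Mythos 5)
Your proof fills in, correctly and in the intended way, the details behind the paper's one-line argument ("apply the previous Lemma by induction"). The induction, the role of orthogonality, and the reliance on the simply-laced hypothesis to rule out $\theta_j-\beta$ being a root are exactly the points at stake. One small imprecision is worth flagging: the commutation $[U_i,U_{\theta_j}]=1$ for $j<i$ shows that the already-normalized coordinate $y_{\theta_j}\in U_{\theta_j}$ of $y=u_{i-1}\cdot x$ is unchanged by $v_i$, but to conclude that the $U(\theta_j)$-component of $v_i\cdot y$ stays inside $U_{\theta_j}$ you must also check that conjugation by $v_i$ does not push the \emph{other} nonzero coordinates of $y$ (which by the inductive hypothesis lie in $\prod_{\alpha\in R_i}U_\alpha$) into $U(\theta_j)\setminus U_{\theta_j}$. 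This holds because your computation shows that the $\beta$ indexing $U_i$ lie in $R_i$, and $R_i$ is a closed subsystem, so $U_i$ preserves $U_{P_2}\cap\prod_{\alpha\in R_i}U_\alpha$ by conjugation and hence cannot create new components in $R_j\setminus R_{j+1}$ for $j<i$. With that one sentence added, the argument is complete and matches the route the authors intended.
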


\begin{proo}
  Apply the previous Lemma by induction.
\end{proo}

In particular, we see that $\rank(X)=\dim U_{P_2}/U\leq s$. But
$(\theta_i)_{i\in[1,s]}$ is a sequence of mutually orthogonal roots
with $(\theta_i,\varpi_{P_2})=1$ and
$(\theta_i,w_{P_1}(\varpi_{P_1}))=(\theta_i,-\varpi_{P_2})=-1$ thus by
Corollary \ref{coro-dist} we have
$d(\varpi_{P_1},w_{P_2}(\varpi_{P_2}))=d(w_{P_1}(\varpi_{P_1}),\varpi_{P_2})\geq
s$ and the proposition is proved.
\end{proo}

\begin{theo}
Assume that $P_1$ and $P_2$ are opposite and $w$ is the longest
element. 
  Let ${\mathcal{O}}\in B(X)$ and set $\Phi({\mathcal{O}})=(u,v)$. Then
  $\rank({\mathcal{O}})+d(u,v)=\rank(X)$. 
\end{theo}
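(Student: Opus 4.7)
My plan is to pinch $f(\mathcal{O}) := \rank(\mathcal{O}) + d(\Phi(\mathcal{O}))$ between two values equal to $\rank(X)$ by combining monotonicity along the weak order with direct computations at the extremes. Rewriting the conclusion of Lemma \ref{lemm-ineq} as
$$d(u,v) + \rank(\mathcal{O}) \leq d(u',v') + \rank(\mathcal{O}'), \qquad \mathcal{O} \leq \mathcal{O}',$$
the function $f$ is weakly increasing along the weak order. Since within $G/H$ every $B$-orbit lies above some minimal $B$-orbit $\mathcal{O}_{\min}$ and below the open orbit $\mathcal{O}_X$, the identity $f \equiv \rank(X)$ will follow as soon as we verify $f(\mathcal{O}_X) \leq \rank(X) \leq f(\mathcal{O}_{\min})$.

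The top estimate is immediate: the open $B$-orbit projects onto the open Schubert cells, so $\Phi(\mathcal{O}_X) = (w_{P_1}, w_{P_2})$. The opposite-pair identity $w_{P_1}(\chi_{P_1}) = -\chi_{P_2}$ transports to $w_{P_1}(\varpi_{P_1}) = -\varpi_{P_2}$ and, symmetrically, $w_{P_2}(\varpi_{P_2}) = -\varpi_{P_1}$, so
$$d(w_{P_1}(\varpi_{P_1}), w_{P_2}(\varpi_{P_2})) = (\varpi_{P_1},\varpi_{P_2}) - (-\varpi_{P_2},-\varpi_{P_1}) = 0.$$
Combined with $\rank(\mathcal{O}_X) = \rank(X)$ for the open $B$-orbit of a spherical variety, this yields $f(\mathcal{O}_X) = \rank(X)$.

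The bottom estimate is where Proposition \ref{prop-ineq} enters. By Proposition \ref{prop-spec}, a minimal $\mathcal{O}_{\min}$ is a $B \times B$-orbit, i.e.\ a product of Schubert cells $\Omega_u \times \Omega_{u^\vee}$ containing the $T$-fixed point $(uP_1, u^\vee P_2)$; since any $B$-orbit through a $T$-fixed point is a single $U$-orbit, $\rank(\mathcal{O}_{\min}) = 0$. Lemma \ref{lemm-dual} gives $u^{-1}u^\vee = w_{P_2}$, and because $\varpi_{P_2}$ is stabilised by $W_{P_2}$ one has $w_{P_2}(\varpi_{P_2}) = w_0(\varpi_{P_2}) = w(\varpi_{P_2})$; therefore by $W$-invariance of $d$,
$$d(u(\varpi_{P_1}), u^\vee(\varpi_{P_2})) = d(\varpi_{P_1}, w(\varpi_{P_2})) \geq \rank(X),$$
the inequality being exactly Proposition \ref{prop-ineq}. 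Hence $f(\mathcal{O}_{\min}) \geq \rank(X)$.

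Chaining the three observations produces
$$\rank(X) \leq f(\mathcal{O}_{\min}) \leq f(\mathcal{O}) \leq f(\mathcal{O}_X) = \rank(X),$$
forcing $f(\mathcal{O}) = \rank(X)$. The only nontrivial step in the plan is the lower bound on $f(\mathcal{O}_{\min})$, and that obstacle is handled entirely by Proposition \ref{prop-ineq}, whose construction of mutually orthogonal roots realising the distance already does the geometric work; the rest of the argument is purely an accounting exercise using monotonicity.
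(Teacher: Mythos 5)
Your proof is correct and follows essentially the same approach as the paper: both use the monotonicity of $\rank(\mathcal{O}) + d(\Phi(\mathcal{O}))$ along the weak order (Lemma \ref{lemm-ineq}), pin its value at the open orbit (where the distance vanishes), and use Proposition \ref{prop-ineq} together with Proposition \ref{prop-spec} and Lemma \ref{lemm-dual} to pin it at a minimal orbit. Your packaging via the function $f$ and the explicit observation that $\rank(\mathcal{O}_{\min}) = 0$ (from the $T$-fixed point) is slightly cleaner, but the argument and the lemmas invoked are the same.
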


\begin{proo}
  As in the proof of Lemma \ref{lemm-ineq}, we may choose a sequence
  $(P{\gamma_i})_{i\in[1,r]}$ of minimal parabolic subgroups raising a minimal orbit
  ${\mathcal{O}}'$ for the weak order to $X$ such that if we write
  ${\mathcal{O}}_i$ for the dense $B$-orbit in $P_{\gamma_i}\cdots
  P_{\gamma_1}{\mathcal{O}}'$ there is an index $k$ such that ${\mathcal{O}}_k={\mathcal{O}}$. Set $\Phi({\mathcal{O}}_i)=(u_i,v_i)$. According to the
  proof of Lemma \ref{lemm-ineq}, the equality
  $d(u_{i+1},v_{i+1})=d(u_i,v_i)-1$ implies the equality $\rank({\mathcal{O}}_{i+1})=\rank({\mathcal{O}}_i)+1$. In particular, we get
  $d(u_0,v_0)=d(u_0,v_0)-d(u_r,v_r)\leq \rank(X)-\rank({\mathcal{O}}')\leq\rank(X)\leq d(1,w_{P_2})$. But since ${\mathcal{O}}'$ is
  minimal for the weak order we have by Theorem \ref{theo-spec} the
  equality $v_0=u_0^\vee$ and by Lemma \ref{lemm-dual} we have
  $u_0^{-1}u_0^\vee=w_{P_2}$. Therefore
  $d(u_0,v_0)=d(1,u_0^{-1}v_0)=d(1,w_{P_2})$ and we have equality in
  all the inequalities. The result follows.
\end{proo}

\begin{coro}
\label{N-coro}
  There is no edge of type $N$ in the graph $\Gamma(X)$.
\end{coro}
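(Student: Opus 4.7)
The plan is to combine the rank-distance equality just established with the case analysis in the proof of Lemma \ref{lemm-ineq} to rule out type N edges directly. First, I would reduce to the opposite pair setting. Any edge of $\Gamma(X)$ lies over some $G$-orbit $G/H \subseteq X$, and by Corollary \ref{cor-ind} such a $G$-orbit is $G\times^R L_R/K$ for an opposite pair $(Q_1,Q_2)$ in $L_R$; by Lemma \ref{lemm-ind}(ii) every edge is either of the first form (and hence of type U, so not N) or corresponds via the bijection to an edge of $B_{L_R}(L_R/K)$ of the same type. Thus it suffices to exclude type N edges in $B(L_R/K)$, which is covered by the preceding theorem applied to the opposite pair case.

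Now, assume $(P_1,P_2)$ is opposite and $w$ is the longest element, and suppose $\mathcal{O}\to\mathcal{O}'$ is an edge raised by a minimal parabolic $P_\gamma$. Write $\Phi(\mathcal{O})=(u,v)$ and $\Phi(\mathcal{O}')=(u',v')$. Applying the preceding theorem to both orbits gives
\[ \rank(\mathcal{O}') - \rank(\mathcal{O}) = d(u,v) - d(u',v'). \]
If the edge were of type N, Lemma \ref{lemm-edges}(N) would force the left-hand side to equal $1$, so $d(u',v') = d(u,v)-1$.

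Finally, I would invoke the case distinction from the proof of Lemma \ref{lemm-ineq}: the distance strictly decreases only in the two mirror situations where $(\gamma,u(\varpi_{P_1}))$ and $(\gamma,v(\varpi_{P_2}))$ are both nonzero with opposite signs. In either of these situations, the same argument (taking $y$ the $T$-fixed point of $\Omega_v$ and producing $s_\gamma(y)\notin\Omega_v$, or symmetrically on the first factor) exhibits a third $T$-fixed point of $P_\gamma\mathcal{O}$ that must lie in a third $B$-orbit. But a type N edge is characterised in Lemma \ref{lemm-edges} by the fact that $P_\gamma\mathcal{O}$ contains exactly two $B$-orbits, contradiction. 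Hence the edge is of type T, and no edges of type N occur.

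There is no real obstacle here beyond careful bookkeeping: the hard work was done in the theorem (the rank-distance equality) and in the proof of Lemma \ref{lemm-ineq} (the construction of the third orbit whenever the distance drops). The one point to be attentive to is ensuring the reduction via parabolic induction does not hide a type N edge, which is precisely guaranteed by the ``same type'' statement in Lemma \ref{lemm-ind}(ii).
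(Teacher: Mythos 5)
Your proposal is correct and follows essentially the same approach as the paper: reduce to the opposite-pair case via Lemma \ref{lemm-ind} and Corollary \ref{cor-ind}, use the rank--distance equality to show a rank-raising edge must drop the distance by one, and then use the case analysis from the proof of Lemma \ref{lemm-ineq} (production of a third $T$-fixed point, hence a third $B$-orbit in $P_\gamma\mathcal{O}$) to conclude the edge is type T rather than N. The only stylistic difference is that you apply the theorem directly to a single edge, whereas the paper phrases the same count along a chain from a minimal orbit; this is a marginally cleaner packaging of the identical argument.
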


\begin{proo}
By Lemma \ref{lemm-ind} and Corollary \ref{cor-ind}, we may assume that $P_1$ and $P_2$ are
opposite and $w$ is the longest element.

  Choose any minimal orbit ${\mathcal{O}}$ in $X$ and any sequence
  $(P_{\gamma_i})_{i\in[1,r]}$ of minimal parabolic subgroups raising ${\mathcal{O}}$ to
  $X$. Write ${\mathcal{O}}_i$ for the dense $B$-orbit in
  $P_{\gamma_i}\cdots P_{\gamma_1}{\mathcal{O}}$ and set $\Phi({\mathcal{O}}_i)=(u_i,v_i)$. According to the proof of Lemma \ref{lemm-ineq}
  and to Lemma \ref{lemme-facile}, the equality
  $d(u_{i+1},v_{i+1})=d(u_i,v_i)-1$ implies the equality $\rank({\mathcal{O}}_{i+1})=\rank({\mathcal{O}}_i)+1$ and occurs only when
  $(u_{i}(\varpi_{P_1}),\gamma_{i+1})(v_{i}(\varpi_{P_2}),\gamma_{i+1})<0$. All the edges corresponding to such a raising by $P_{\gamma_{i+1}}$ are of type T by the above proof. But since $d(u_0,v_0)=\rank(X)-\rank({\mathcal{O}})$ there is no other edge of $\Gamma(X)$ raising the rank. Since edges of type N raise the rank there is no such edge.
\end{proo}

Let $(P_{\gamma_i})_{i\in[1,r]}$ be a sequence of parabolics raising
the orbit ${\mathcal{O}}$ to $P_{\gamma_r}\cdots P_{\gamma_1}{\mathcal{O}}$.

\begin{coro}
  The map
  $\pi:P_{\gamma_r}\times^B\cdots\times^BP_{\gamma_1}\times^B {\mathcal{O}}\to P_{\gamma_r}\cdots P_{\gamma_1}{\mathcal{O}}$ is
  birational.
\end{coro}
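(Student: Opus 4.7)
The plan is to induct on $r$, using Lemma \ref{lemm-edges} combined with Corollary \ref{N-coro} at each single raising step. For the base case $r=1$, Corollary \ref{N-coro} rules out the type N situation, so the edge from $\mathcal{O}$ to the dense $B$-orbit $\mathcal{O}_1$ of $P_{\gamma_1}\mathcal{O}$ is of type U or T; in both cases Lemma \ref{lemm-edges} says that the single-step action map $P_{\gamma_1}\times^B \mathcal{O} \to P_{\gamma_1}\mathcal{O}$ is birational.

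For the inductive step, I set $Y_i = P_{\gamma_i}\times^B\cdots\times^B P_{\gamma_1}\times^B\mathcal{O}$, let $\pi_i : Y_i \to P_{\gamma_i}\cdots P_{\gamma_1}\mathcal{O}$ denote the corresponding action map, and write $\mathcal{O}_i$ for the dense $B$-orbit in $P_{\gamma_i}\cdots P_{\gamma_1}\mathcal{O}$. I then factor $\pi_r$ as
$$Y_r \;=\; P_{\gamma_r}\times^B Y_{r-1} \; \xrightarrow{\mathrm{id}\times^B \pi_{r-1}} \; P_{\gamma_r}\times^B\bigl(P_{\gamma_{r-1}}\cdots P_{\gamma_1}\mathcal{O}\bigr) \; \xrightarrow{\;a\;} \; P_{\gamma_r}\cdots P_{\gamma_1}\mathcal{O}.$$
The first arrow is obtained from $\pi_{r-1}$ by the locally trivial $\mathbb{P}^1$-fibration $P_{\gamma_r}\times^B(-)$, so by the induction hypothesis it is birational. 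For the second arrow, I restrict to the $B$-stable open dense subset $\mathcal{O}_{r-1}\subseteq P_{\gamma_{r-1}}\cdots P_{\gamma_1}\mathcal{O}$: applying Corollary \ref{N-coro} and Lemma \ref{lemm-edges} to the edge $(\mathcal{O}_{r-1},\mathcal{O}_r)$, which is again of type U or T, yields birationality of $P_{\gamma_r}\times^B \mathcal{O}_{r-1} \to P_{\gamma_r}\mathcal{O}_{r-1}$, and $P_{\gamma_r}\mathcal{O}_{r-1}$ is open dense in $P_{\gamma_r}\cdots P_{\gamma_1}\mathcal{O}$. Hence $a$ is birational, and composing two birational morphisms of irreducible varieties gives the result.

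The argument is essentially a formality once Corollary \ref{N-coro} is in hand; the only mild bookkeeping to watch is the dimension count and the passage to dense open subsets. All varieties involved are irreducible, and Lemma \ref{lemm-edges} gives $\dim\mathcal{O}_i = \dim\mathcal{O}+i$ while $\dim Y_i = \dim\mathcal{O}+i$ since each $P_{\gamma_j}/B$ is a projective line, so the dimensions match at every stage and no collapsing can occur that would obstruct birationality.
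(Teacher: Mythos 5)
Your argument is correct and is exactly the induction the paper leaves implicit (it states the corollary without proof, as an immediate consequence of Corollary \ref{N-coro} and Lemma \ref{lemm-edges}). The factorization through $P_{\gamma_r}\times^B(-)$, the use of a $B$-stable open set on which $\pi_{r-1}$ is an isomorphism, and the one-step birationality from the absence of type N edges are all the right ingredients; the dimension bookkeeping is consistent and the restriction that each $P_{\gamma_{i+1}}$ genuinely raises $\mathcal{O}_i$ (so the edge exists and the base case applies) is built into the paper's notion of a raising sequence.
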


Let $Y$ be the closure of a $B$-orbit in $X$. 

\begin{coro}
  The normalisation morphism $\nu:\widetilde{Y}\to Y$ is an
  homeomorphism.
\end{coro}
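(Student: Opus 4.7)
\begin{preu}
The plan is to exhibit a proper birational cover of $Y$ by a normal variety and conclude via Stein factorisation, using Corollary~\ref{N-coro} to establish the crucial fibre-connectedness statement.

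Write $Y = \overline{\mathcal{O}}$ and pick a minimal $B$-orbit $\mathcal{O}_0 \preccurlyeq \mathcal{O}$ for the weak order. By Theorem~\ref{theo-spec} the closure $\overline{\mathcal{O}_0}$ is a product of two Schubert varieties, hence normal. Fix a sequence $(P_{\gamma_i})_{i\in[1,r]}$ of minimal parabolics raising $\mathcal{O}_0$ to $\mathcal{O}$ and form
$$W := P_{\gamma_r}\times^B\cdots\times^B P_{\gamma_1}\times^B\overline{\mathcal{O}_0},$$
which, being an iterated locally trivial $\mathbb{P}^1$-bundle over the normal variety $\overline{\mathcal{O}_0}$, is itself normal. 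The action map $\pi : W \to Y$ is proper, and by the previous corollary it is birational.

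Now I would invoke Stein factorisation $\pi = \nu' \circ \pi'$, with $\pi' : W \to Y'$ of connected fibres and $\nu' : Y' \to Y$ finite. Since $W$ is normal and $\pi$ birational, the sheaf $\pi_*\mathcal{O}_W$ is integrally closed in the common function field of $W$ and $Y$, so $Y' = \mathrm{Spec}_Y(\pi_*\mathcal{O}_W)$ coincides with the normalisation $\widetilde{Y}$ and $\nu' = \nu$. Being finite, $\nu$ is a homeomorphism precisely when it is set-theoretically bijective, and given that $\pi'$ has connected fibres this is in turn equivalent to $\pi$ having connected fibres.

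To secure the connectedness of the fibres of $\pi$, I would decompose $\pi$ as the composition of the proper birational $B$-equivariant action maps
$$\rho_i : P_{\gamma_i}\times^B\overline{\mathcal{O}_{i-1}} \to \overline{\mathcal{O}_i} \qquad (i = 1,\ldots,r),$$
each thickened by the preceding $\mathbb{P}^1$-factors, where $\mathcal{O}_i$ denotes the dense $B$-orbit in $P_{\gamma_i}\mathcal{O}_{i-1}$. As connected fibres are preserved by composition of proper surjective maps, it suffices to verify that each $\rho_i$ has connected fibres. By Corollary~\ref{N-coro} the cover relation $\mathcal{O}_{i-1} \prec \mathcal{O}_i$ is of type U or T; each fibre of $\rho_i$ is then a closed $B$-invariant subset of $P_{\gamma_i}/B \simeq \mathbb{P}^1$, and a short dimension count combined with the orbit descriptions of Lemma~\ref{lemm-edges} should force it to be either a single point or the whole $\mathbb{P}^1$. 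The decisive obstacle lies here: the absence of N-edges is essential, since an N-edge would make $\rho_i$ a generically degree-two cover, producing a disconnected generic fibre and collapsing the argument. All the hard work has already been done in Corollary~\ref{N-coro}, and the statement follows as a formal consequence.
\end{preu}
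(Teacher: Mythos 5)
Your overall strategy coincides with the paper's: the paper also exhibits the morphism $P_{\gamma_r}\times^B\cdots\times^B P_{\gamma_1}\times^B Y' \to Y$ with $Y'$ a product of Schubert varieties (hence normal), asserts it is ``birational with connected fibers,'' and concludes. Your Stein-factorisation paragraph is the right way to turn that one-line assertion into a complete argument, and your identification of $Y'$ in the Stein factorisation with $\widetilde{Y}$ (using normality of $W$ and birationality of $\pi$) is correct.

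The one genuine problem is in the fibre-connectedness paragraph. You assert that a fibre of $\rho_i \colon P_{\gamma_i}\times^B\overline{\mathcal{O}_{i-1}} \to \overline{\mathcal{O}_i}$ is ``a closed \emph{$B$-invariant} subset of $P_{\gamma_i}/B\simeq\mathbb{P}^1$.'' This is not the case. Writing $\rho_i^{-1}(y) = \{pB : p^{-1}y\in\overline{\mathcal{O}_{i-1}}\}$, this set is stable under the left action of $\mathrm{Stab}_{P_{\gamma_i}}(y)$ (since $\overline{\mathcal{O}_{i-1}}$ is $B$-stable, $q\in\mathrm{Stab}_{P_{\gamma_i}}(y)$ sends $pB$ to $qpB$ with $(qp)^{-1}y=p^{-1}y$), but there is no action of $B$ on the fibre. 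If the fibre really were $B$-invariant the claim would be immediate, since the only closed $B$-stable subsets of $P_{\gamma_i}/B$ are $\emptyset$, the unique $B$-fixed point, and all of $\mathbb{P}^1$, and that is probably what led you astray. In reality, the group that acts is the image $S$ of $\mathrm{Stab}_{P_{\gamma_i}}(y)$ in $\mathrm{Aut}(P_{\gamma_i}/B)\cong\mathrm{PGL}_2$, and by the discussion leading to Lemma~\ref{lemm-edges} this image can be a maximal torus when $By$ is the dense orbit of a type-T $P_{\gamma_i}$-orbit $P_{\gamma_i}y$. A torus has the disconnected two-point set $\{0,\infty\}$ among its closed invariant subsets of $\mathbb{P}^1$, so the conclusion ``a single point or the whole $\mathbb{P}^1$'' is not automatic; ruling out the two-point fibre requires showing that $\overline{\mathcal{O}_{i-1}}$ cannot contain both of the codimension-one $B$-orbits of such a $P_{\gamma_i}y$ without also containing its dense orbit. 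Your sentence about N-edges making $\rho_i$ generically $2$-to-$1$ is correct, but it only explains why the \emph{generic} fibre is a point; it does not by itself control the special fibres lying over lower type-T strata, which is where the danger lies. (The paper also states the connected-fibre claim without proof, so the terse style is shared; but the specific justification you offer — via $B$-invariance — is false and, as it stands, proves too much.)
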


\begin{proo}
Let ${\mathcal{O}}$ be the dense $B$-orbit in $Y$. There exists a minimal $B$-orbit ${\mathcal{O}}'$ that can be raised to ${\mathcal{O}}$. Let $(P_{\gamma_i})_{i\in[1,r]}$ be a sequence of parabolics raising the orbit ${\mathcal{O}'}$ to ${\mathcal{O}}$.
The closure $Y'$ of ${\mathcal{O}}'$ is a product of Schubert varieties therefore normal and there is a morphism birational with connected fibers
$P_{\gamma_r}\times^B\cdots\times^B P_{\gamma_1}\times^BY'\to Y$. The result follows. 
\end{proo}

\section{Proof of Theorem \ref{main-intro}}
\label{sec-fin}

We want to use the technique developed in \cite{brion1} and \cite{brion2} to prove normality of the $B$-orbit closures. In particular Brion proves the following.

\begin{prop}
\label{prop-mb}
Let $X$ be $G$-spherical variety such that the graph $\Gamma(X)$ has no edge of type $N$. Let $Y$ be a $B$-stable subvariety such that for all minimal parabolic subgroups $P$ raising $Y$ the variety $PY$ is normal, then the non normal locus in $Y$ is $G$-invariant.
\end{prop}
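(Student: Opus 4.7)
The plan is to show that $N(Y)$ is $P$-stable for every minimal parabolic $P \supseteq B$. Combined with its automatic $B$-stability and the fact that $G$ is generated by $B$ together with these parabolics, this gives the $G$-invariance of $N(Y)$ inside $X$. If $P$ stabilizes $Y$, then $P$ acts on $Y$ by automorphisms and preserves the intrinsically defined non-normal locus, so there is nothing to prove. Hence I may assume $PY \neq Y$, and here the hypothesis that $PY$ is normal comes into play.

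The main tool is the proper surjective action map
\[
\pi \colon P \times^B Y \longrightarrow PY.
\]
Two ingredients are crucial. First, because $\Gamma(X)$ contains no edge of type N, the raising of the dense $B$-orbit of $Y$ by $P$ is of type U or T in the sense of Lemma \ref{lemm-edges}, and in both cases $\pi$ is birational. Second, the projection $P \times^B Y \to P/B \simeq \mathbb{P}^1$ presents $P \times^B Y$ as a Zariski-locally trivial $Y$-bundle, so its local ring at $[p,y]$ agrees, up to a smooth factor, with that at $y \in Y$; in particular, the non-normal locus of $P \times^B Y$ equals $P \times^B N(Y)$. Applying the standard principle that a proper birational morphism onto a normal target is an isomorphism off its exceptional locus $E$, one obtains $P \times^B N(Y) \subseteq E$, hence
\[
P \cdot N(Y) \;=\; \pi(P \times^B N(Y)) \;\subseteq\; \pi(E),
\]
and $\pi(E)$ is a proper closed subset of $PY$ disjoint from its dense $B$-orbit.

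The main obstacle is to upgrade the inclusion $P \cdot N(Y) \subseteq \pi(E)$ into $P \cdot N(Y) \subseteq N(Y)$. This requires both (i) $P \cdot N(Y) \subseteq Y$, and (ii) that every $P$-translate of a non-normal point of $Y$ is again non-normal in $Y$. For (i), in type U this is almost automatic since the only $B$-orbit of $PY$ outside $Y$ is the dense one, which lies in the isomorphism locus of $\pi$; in type T one must analyse the fibres of $\pi$ over the additional $B$-orbit $\mathcal{O}''$ of $P\mathcal{O}$ -- of the same dimension as the dense $B$-orbit of $Y$ but potentially outside $Y$ -- to exclude non-normal points of $P \times^B Y$ sitting above it. For (ii), I would combine the equality $\pi_* \mathcal{O}_{P \times^B Y} = \mathcal{O}_{PY}$ (valid because $\pi$ is proper and birational with normal target) with the $P$-equivariance of $\pi$ and the local product structure of the $Y$-bundle to compare the stalks of $\mathcal{O}_Y$ at $y$ and at $py$. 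This local-ring transport is the delicate heart of the argument and follows the strategy developed by Brion.
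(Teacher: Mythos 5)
The paper does not actually prove Proposition~\ref{prop-mb}: it is quoted as a result of Brion, with the proof delegated to \cite{brion1,brion2}. So there is no ``paper's own proof'' to compare against; what you have written is a reconstruction of Brion's argument. With that caveat, let me assess it on its own terms.

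Your framework is the right one: reduce to showing that $N(Y)$ is stable under every minimal parabolic $P$, handle the $P$-stabilizing case trivially, and for $P$ raising $Y$ analyze the proper birational action map $\pi\colon P\times^B Y\to PY$ together with the identification $N(P\times^B Y)=P\times^B N(Y)$ coming from local triviality over $P/B$. The inclusion $P\times^B N(Y)\subseteq E$ (the exceptional locus) via Zariski's Main Theorem is correct, as is the identity $\pi_*\mathcal{O}_{P\times^B Y}=\mathcal{O}_{PY}$.

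However, there are two problems. First, on point~(i) your case split by edge type is a red herring and, in the U-case, your justification is wrong: $PY\setminus Y$ need not equal the single dense $B$-orbit $\mathcal{O}'$ (the boundary $\overline{\mathcal{O}'}\setminus\mathcal{O}'$ generally contains $B$-orbits not lying in $Y$), so it is not true that ``the only $B$-orbit of $PY$ outside $Y$ is the dense one.'' The correct and uniform argument is that for any $z\in PY\setminus Y$ the fiber $\pi^{-1}(z)=\{pB\in P/B:\ p^{-1}z\in Y\}$ is a proper closed subscheme of $P/B\simeq\mathbb{P}^1$ (it cannot be all of $\mathbb{P}^1$, since that would force $z\in Pz\subseteq Y$), hence finite; a finite proper birational map onto the normal open subset $PY\setminus Y$ is an isomorphism, so $\pi(E)\subseteq Y$, giving $P\cdot N(Y)\subseteq Y$ regardless of the type of the edge. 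Second, and more seriously, you identify point~(ii) -- the ``local-ring transport'' asserting that $P$-translation preserves non-normality inside $Y$ -- as ``the delicate heart,'' but you do not carry it out. The inclusion $P\cdot N(Y)\subseteq\pi(E)\subseteq Y$ does not by itself give $P\cdot N(Y)\subseteq N(Y)$, because $\pi(E)$ can strictly contain $N(Y)$ (e.g.\ it contains every $y$ with $Py\subseteq Y$, and such $y$ need not be non-normal). The $P$-equivariance of $\pi$ shows that $\pi(E)$ is a $P$-stable closed subset of $Y$ containing $N(Y)$, but closing the gap between $\pi(E)$ and $N(Y)$ -- or otherwise showing directly that $p\cdot y\in N(Y)$ when $y\in N(Y)$, $p\in P$, $p\cdot y\in Y$ -- is precisely the part of Brion's argument that your proposal defers to a citation rather than supplies. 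As it stands, then, the proposal correctly sets up the geometry and identifies the key ingredients, but it does not constitute a complete proof of the proposition.
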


We will use the following consequence of this result.

\begin{coro}
\label{coro-sans-z}
Assume that $X$ is $G$-spherical with a unique closed $G$-orbit $Z$ and such that the graph $\Gamma(X)$ has no edge of type $N$. If any $B$-orbit closure containing $Z$ is normal, then any $B$-orbit closure is normal.
\end{coro}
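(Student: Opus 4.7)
The plan is to proceed by descending induction on the weak order, using Proposition \ref{prop-mb} as the main structural input. Let $Y = \overline{\mathcal{O}}$ be a $B$-orbit closure; we show $Y$ is normal, assuming inductively that $\overline{\mathcal{O}''}$ is normal for every $B$-orbit $\mathcal{O}'' > \mathcal{O}$ in the weak order.

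For the base case, if $\mathcal{O}$ is maximal for the weak order, then no minimal parabolic $P \supseteq B$ raises $\mathcal{O}$, so $\mathcal{O}$ is $G$-stable. Then $Y$ is a $G$-orbit closure, and since $Z$ is the unique closed $G$-orbit in $X$, we have $Z \subseteq Y$; by the hypothesis of the corollary $Y$ is normal.

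For the inductive step, assume some minimal parabolic raises $\mathcal{O}$, and fix any minimal parabolic $P \supseteq B$ with $P\mathcal{O} \neq \mathcal{O}$. Let $\mathcal{O}'$ be the dense $B$-orbit of $P\mathcal{O}$. Since $P \times^B Y \to X$ is proper with image $PY$, this image is closed and irreducible and contains $\mathcal{O}'$ as a dense subset; hence $PY = \overline{\mathcal{O}'}$. As $\mathcal{O} < \mathcal{O}'$ in the weak order, the inductive hypothesis gives that $PY$ is normal. This holds for every minimal parabolic raising $Y$, so Proposition \ref{prop-mb} applies (its hypothesis on the absence of type-N edges is part of our assumption), and we conclude that the non-normal locus $S \subseteq Y$ is $G$-invariant.

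To finish, distinguish two cases. If $Z \subseteq Y$, then $Y$ is normal by hypothesis and $S = \emptyset$. If $Z \not\subseteq Y$, then $S$ is a $G$-invariant closed subset of $X$ contained in $Y$; were it non-empty, it would have to contain a closed $G$-orbit of $X$, namely $Z$, contradicting $Z \not\subseteq Y$. Thus $S = \emptyset$ in either case and $Y$ is normal. The only subtle point, and the sole potential obstacle, is this final dichotomy: one has to observe that either the hypothesis directly gives normality, or $G$-invariance of the non-normal locus combined with the uniqueness of the closed $G$-orbit $Z$ forces the non-normal locus to be empty.
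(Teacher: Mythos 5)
Your proof is correct and fills in the argument the paper leaves implicit for this corollary (the paper states it as a direct consequence of Proposition~\ref{prop-mb} and carries out the same inductive scheme inside the proof of Theorem~\ref{main-intro}). The structure — descending induction on the weak order, verification via the inductive hypothesis that $PY$ is normal for every minimal parabolic $P$ raising $Y$ so that Proposition~\ref{prop-mb} applies, and the final dichotomy using that any non-empty closed $G$-stable subset must contain the unique closed orbit $Z$ — is precisely the intended argument.
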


Consider $X = G/P_1 \times G/P_2$ with $P_1$ and $P_2$ cominuscule. The variety $X$ is $G$-spherical and has a unique closed $G$-orbit $Z$ obtained as the image of the map $G/P_1 \cap P_2$ induced by the diagonal embedding $G \to G \times G$. To prove Theorem \ref{main-intro}, we therefore only have to prove the normality of $B$-orbit closures containing $Z$.

Let $Y'$ be a $B$-orbit closure containing $Z$. There exists a minimal orbit closure $Y$ and a sequence of minimal parabolic subgroups $(P_{\gamma_i})_{i\in[1,r]}$ such that with $Y_0=Y$ and $Y_i=P_{\gamma_i}\cdots P_{\gamma_1} Y$ for $i\geq 1$, the parabolic $P_{\gamma_{i+1}}$ raises $Y_i$ to $Y_{i+1}$ for all $i$ and $Y_r=Y'$.

\begin{prop}
The inverse image $\pi^{-1}(Z)$ of $Z$ by $\pi:P_{\gamma_r} \times^B \cdots \times^B P_{\gamma_1} \times^B Y \to Y'$ is 
reduced. 
Furthermore, the differential of the map $P_{\gamma_r}\times^B\cdots\times^BP_{\gamma_1}\times^B (Z\cap Y)\to Z$ is generically surjective.
\end{prop}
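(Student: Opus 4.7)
The plan is to reduce both assertions to the analysis of the scheme-theoretic intersection $Y \cap Z$ inside $X$, and then exploit the structural results of Sections 1 and 2.

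First, since $Z$ is a $G$-orbit, it is $P_{\gamma_i}$-stable for every $i$, and hence the scheme-theoretic preimage factors as
\[
\pi^{-1}(Z) \;\simeq\; P_{\gamma_r} \times^B \cdots \times^B P_{\gamma_1} \times^B (Y \cap Z).
\]
Each stage $P_{\gamma_i} \times^B (-) \to P_{\gamma_i}/B \simeq \pu$ is a locally trivial smooth $\pu$-bundle, so reducedness of $\pi^{-1}(Z)$ reduces to reducedness of $Y \cap Z$, and generic surjectivity of the differential of the restricted map will likewise reduce, via the bundle structure, to the corresponding statement for the induced map $Y \cap Z \to Z$ after accumulating the $P_{\gamma_i}$-action.

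For the analysis of $Y \cap Z$, I would invoke Lemma \ref{lemm-ind} and Corollary \ref{cor-ind} to reduce to the opposite-pair case, where Theorem \ref{theo-spec} and Lemma \ref{lemm-dual} give $Y = \overline{\Omega}_u \times \overline{\Omega}_{u^\vee}$ for some $u \in W^{P_1}$ with $u^\vee = u w_{P_2} \in W^{P_2}$. Via the $G$-equivariant identification $Z \simeq G/(P_1 \cap P_2)$ and the two projections $p_i\colon G/(P_1 \cap P_2) \to G/P_i$, the scheme $Y \cap Z$ becomes the intersection of two Schubert pullbacks
\[
p_1^{-1}(\overline{\Omega}_u) \cap p_2^{-1}(\overline{\Omega}_{u^\vee}).
\]
The hard part will be the reducedness of this scheme-theoretic intersection; the strategy is to use the relation $u^\vee = u w_{P_2}$ from Lemma \ref{lemm-dual} to recognize it as a Schubert (or Richardson) variety inside $G/(P_1 \cap P_2)$, whose normality is classical \cite{MR}. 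If that route meets resistance, the alternative is to compute the intersection at each $T$-fixed point: the constraint $\ell(u) + \ell(u^\vee) = \ell(w_{P_2})$ together with the opposite-pair structure should force the two Schubert pullbacks to meet transversely, giving reducedness directly.

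Finally, for the generic surjectivity of the differential I would track the raising chain $Y = Y_0 \subset \cdots \subset Y_r = Y'$ inside $Z$: at each step, $P_{\gamma_{i+1}}(Y_i \cap Z)$ is a closed $B$-stable subset of $Y_{i+1} \cap Z$, and since $Z \subseteq Y'$ the accumulated image $P_{\gamma_r} \cdots P_{\gamma_1}(Y \cap Z)$ exhausts $Z$. Surjectivity of the restricted map onto the smooth irreducible variety $Z$, combined with the clean dimension accounting furnished by Corollary \ref{N-coro} (no edges of type N, so both dimension and rank increase by one at every raising step), then yields generic surjectivity of the differential by a standard generic smoothness argument. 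The main obstacle throughout is the scheme-theoretic reducedness of $Y \cap Z$, since the set-theoretic geometry is already transparent from the preceding sections.
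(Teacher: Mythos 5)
Your reduction of both assertions to the scheme-theoretic intersection $Y \cap Z$, using that $Z$ is $P_{\gamma_i}$-stable and that the stages of $\pi$ are smooth $\mathbb{P}^1$-bundles, is exactly what the paper does, and your identity $P_{\gamma_r}\cdots P_{\gamma_1}(Y\cap Z)=Z$ is likewise correct. But the core step, reducedness of $Y\cap Z$, is where your proposal goes astray. The paper's observation is simply this: by Theorem \ref{theo-spec}, $Y$ is a product $X^{P_1}_u \times X^{P_2}_v$ of $B$-Schubert varieties, so under the identification $Z\simeq G/(P_1\cap P_2)$ the intersection $Y\cap Z = p_1^{-1}(X^{P_1}_u)\cap p_2^{-1}(X^{P_2}_v)$ is an intersection of two $B$-Schubert varieties inside $G/(P_1\cap P_2)$, and any such intersection is reduced (all $B$-Schubert varieties in a flag variety are simultaneously compatibly Frobenius split, cf.\ \cite{MR}). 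Neither of your two alternatives reaches this: the intersection is of two Schubert varieties for the \emph{same} Borel, hence it is not a Richardson variety (those require opposite Borels) and is typically a \emph{reducible} union of Schubert varieties, and there is no transversality at $T$-fixed points in general -- the length identity $\ell(u)+\ell(u^\vee)=\ell(w_{P_2})$ is a statement about the dense $G$-orbit and controls nothing about tangent spaces along the closed orbit $Z$.

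Two further issues. First, the reduction to the opposite-pair case via Lemma \ref{lemm-ind} and Corollary \ref{cor-ind} does not help here: parabolic induction describes the $B$-orbit structure \emph{inside} a fixed open $G$-orbit $G/H$, whereas $Z$ sits in the boundary $\overline{G/H}\setminus G/H$, so $Y\cap Z$ is not visible from that reduction; in particular you cannot assume $v=u^\vee$. The paper sidesteps this by invoking Theorem \ref{theo-spec} directly, which makes $Y$ a product of Schubert varieties with no constraint on the pair $(u,v)$. Second, for the differential statement your ``standard generic smoothness argument'' only works in characteristic zero; in positive characteristic a dominant morphism between smooth varieties of equal dimension can be everywhere inseparable. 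The paper instead identifies the restricted map with the classical partial Bott--Samelson multiplication map $P_{\gamma_r}\times^B\cdots\times^BP_{\gamma_1}\times^B\Upsilon\to G/(P_1\cap P_2)$, which is known to be separable (birational onto each image Schubert variety), and this is what actually supplies generic surjectivity of the differential.
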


\begin{proo}
Since $Z$ is $G$-stable, the inverse image of $Z$ by the action $G \times X \to X$ is $G \times Z$. This implies that the inverse image $\pi^{-1}(Z)$ has to be contained in $P_{\gamma_r}\times^B\cdots\times^BP_{\gamma_1}\times^B (Z\cap Y)$ and thus isomorphic to it. But $Y$ is a minimal $B$-orbit closure and as such (Theorem \ref{theo-spec}) is a product $X_u^{P_1}\times X^{P_2}_v$ of Schubert varieties (we write here $X_u^P$ for the orbit closure of $BuP/P$ in $G/P$). The intersection with the closed orbit is therefore an intersection $\Upsilon$ of two Schubert varieties for $B$ in $Z = G/P_1 \cap P_2$ and in particular reduced. The above also implies that the map $P_{\gamma_r}\times^B\cdots\times^BP_{\gamma_1}\times^B (Z\cap Y)\to Z$ is the classical multiplication map $P_{\gamma_r}\times^B\cdots\times^BP_{\gamma_1}\times^B \Upsilon \to G/P_1\cap P_2$ obtained by a partial Bott-Samelson resolution which has a generically surjective differential.
\end{proo}

\begin{coro}
\label{coro-rec}
Let $Y'$ be a $B$-orbit closure in $X$ containing $Z$ such that for any parabolic subgroup $P$ raising $Y'$, the variety $PY'$ is normal, then $Y'$ is normal.
\end{coro}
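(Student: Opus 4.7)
My argument combines Brion's Proposition~\ref{prop-mb} with the resolution $\pi$ constructed above, reducing normality of $Y'$ to a statement along the closed orbit~$Z$.

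First, I apply Proposition~\ref{prop-mb}: since $PY'$ is normal for every minimal parabolic $P$ raising $Y'$, the non-normal locus $N\subseteq Y'$ is $G$-stable. Because $Z\subseteq Y'$ is the unique closed $G$-orbit of~$X$, either $N=\emptyset$ (and we are done) or $N\supseteq Z$; thus it suffices to prove $Y'$ is normal at a single point $z\in Z$. For this, consider the resolution $\pi\colon W:=P_{\gamma_r}\times^B\cdots\times^B P_{\gamma_1}\times^B Y\to Y'$. By Theorem~\ref{theo-spec} the minimal orbit closure $Y$ is a product of Schubert varieties, hence normal; consequently $W$ is normal as an iterated $\mathbb{P}^1$-bundle over~$Y$, and by the birationality corollary at the end of Section~\ref{sec-dist}, $\pi$ is proper and birational. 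It follows that $\pi_*\cO_W$ is integrally closed over $\cO_{Y'}$ inside the function field, and so coincides with $\nu_*\cO_{\widetilde{Y'}}$; normality of $Y'$ at $z$ is therefore equivalent to the equality of stalks $(\pi_*\cO_W)_z=\cO_{Y',z}$.

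To prove this equality I invoke the preceding Proposition: the fiber $\pi^{-1}(Z)=P_{\gamma_r}\times^B\cdots\times^B P_{\gamma_1}\times^B\Upsilon$ is reduced, where $\Upsilon\subseteq Z=G/(P_1\cap P_2)$ is an intersection of Schubert subvarieties, and the restricted map $\pi_Z\colon\pi^{-1}(Z)\to Z$ has generically surjective differential. Since $Z$ is smooth and $\pi_Z$ is surjective, generic surjectivity of the differential combined with reducedness of the fiber forces $\pi_Z$ to be birational on its dominant component, giving $(\pi_Z)_*\cO_{\pi^{-1}(Z)}=\cO_Z$. A cohomology-and-base-change step then transports this to $\pi_*\cO_W\otimes_{\cO_{Y'}}\cO_Z=\cO_Z$; the coherent cokernel $\pi_*\cO_W/\cO_{Y'}$, being supported in $N\cap Z$ near~$z$, must then vanish in a neighborhood of $Z$ by Nakayama's lemma.

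The main obstacle is the base-change step: transferring $(\pi_Z)_*\cO_{\pi^{-1}(Z)}=\cO_Z$ rigorously back to $Y'$ demands control over Tor-terms between $\pi_*\cO_W$ and $\cO_Z$. In our setting this should follow from the reducedness of $\pi^{-1}(Z)$ together with Cohen-Macaulayness of~$W$, ensuring that $\pi^{-1}(Z)$ has the expected dimension and that the derived base change is concentrated in a single degree.
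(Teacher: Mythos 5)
Your opening is correct and matches the paper: by Proposition~\ref{prop-mb} the non-normal locus is $G$-stable, hence empty or containing~$Z$, so it suffices to check normality at a general point of~$Z$. The identification $\pi_*\cO_W=\nu_*\cO_{\widetilde{Y}'}$ (since $W$ is normal and $\pi$ proper birational) is also fine. From there, however, you diverge from the paper's route and run into genuine difficulties.

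The paper's argument is much shorter because it invokes the Corollary proved at the end of Section~\ref{sec-dist}: the normalization $\nu\colon\widetilde{Y}'\to Y'$ is a \emph{homeomorphism}. Once $\nu$ is known to be finite and bijective, showing that $\nu$ is an isomorphism over an open subset of $Z$ only requires, via Nakayama, that the scheme-theoretic fiber $\nu^{-1}(z)$ be a reduced point for general $z\in Z$ (equivalently that $\nu$ be unramified there); this is exactly what the factorization $\pi=\nu\circ\pi'$ together with the reducedness of $\pi^{-1}(Z)$ and generic surjectivity of the differential from the preceding Proposition deliver. No base change, no Tor computation, no comparison of pushforwards along~$Z$.

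Your route tries to prove the stalk equality $(\pi_*\cO_W)_z=\cO_{Y',z}$ directly through cohomology-and-base-change. This has two concrete gaps. First, the intermediate claim $(\pi_Z)_*\cO_{\pi^{-1}(Z)}=\cO_Z$ is not justified: $\pi^{-1}(Z)$ is a partial Bott--Samelson tower over the intersection of two Schubert varieties in $G/(P_1\cap P_2)$, which need not be irreducible, and even on a dominant component you would need connectedness of fibers or a Stein-factorization argument, neither of which is available from the stated data (reducedness of $\pi^{-1}(Z)$ and generic surjectivity of the differential alone do not give birationality of $\pi_Z$). Second, the base-change step $\pi_*\cO_W\otimes_{\cO_{Y'}}\cO_Z\cong(\pi_Z)_*\cO_{\pi^{-1}(Z)}$ requires $\operatorname{Tor}$-vanishing over $\cO_{Y'}$, and you cannot deduce this from Cohen--Macaulayness of~$W$: the relevant $\operatorname{Tor}$'s are over $\cO_{Y'}$, whose Cohen--Macaulayness is precisely part of what the whole theorem is trying to establish, and $W$ being CM says nothing about how $\pi_*\cO_W$ sits as an $\cO_{Y'}$-module relative to $\cO_Z$. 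You honestly flag this as "the main obstacle," and indeed it is not a routine gap to fill. The fix is to follow the paper and first use the homeomorphism property of~$\nu$, which reduces everything to a fiberwise Nakayama statement that follows immediately from the preceding Proposition.
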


\begin{proo}
Indeed, by Proposition \ref{prop-mb}, the non normal locus of $Y'$ is
$G$-invariant and therefore contains $Z$. Let $\nu:\widetilde{Y}'\to
Y'$ be the normalisation. The map $\nu$ is bijective. We therefore
only have to prove that $\nu$ is an isomorphism on an open subset of
$Z$. In particular, we only have to prove that the general fiber of
$\nu$ over $Z$ is reduced and that the differential of $\nu$ is
generically surjective on $Z$. But there exists $Y$, the closure of a
minimal $B$-orbit and a sequence $(P_{\gamma_i})_{i\in[1,r]}$ of minimal
parabolic subgroups as in the previous Proposition. Furthermore, the
morphism $\pi:P_{\gamma_r}\times^B\cdots\times^BP_{\gamma_1}\times^B Y\to Y'$ factorises
through $\nu$. This finishes the proof. 
\end{proo}

\begin{proo-thm}
We prove the normality of $B$-orbit closures by descending induction with respect to the weak order. 

A maximal $B$-orbit $\mathcal{O}$ is a $G$-orbit therefore of the form $G/H$ with $H = P_1 \cap P_2^w$. We thus have $\mathcal{O} \simeq G \times^{P_1} P_1 P_2^w/P_2^w$. The closure is then a locally trivial fibration over $G/P_1$ with fiber the Schubert variety $\overline{P_1P_2^w}/P_2^w$. It is normal since Schubert varieties are normal by \cite{MR}. 

Let $Y$ be a $B$-orbit closure, then by Corollary \ref{coro-sans-z}, the non normal locus must be closed and $G$-stable. It is therefore either empty or contains the unique closed orbit $Z$. If $Y$ does not contain $Z$, then it must be normal. If $Y$ contains $Z$, then by induction assumption, the hypothesis of Corollary \ref{coro-rec} are satisfied and $Y$ is normal. 

The Cohen-Macaulay property follows from general argument in \cite[Section 3, Remark 2]{brion2}. It will also follow from the existence of a rational resolution. For this, let $Y'$ be a $B$-orbit closure and $Y$ and $(P_{\gamma_i})_{i\in[1,r]}$ be the closure of a minimal $B$-orbit and a sequence of minimal parabolics raising $Y$ to $Y'$. The variety $Y$ is a product of Schubert varieties by Theorem \ref{theo-spec}. Let $\widetilde{Y}$ be the product of the Bott-Samelson resolutions of these varieties. Then by the same arguments as in \cite[Section 3, end of Remark 2]{brion2} the morphism $P_{\gamma_r}\times^B\cdots \times^B P_{\gamma_1} \times^B\widetilde{Y}\to Y'$ is a rational resolution.
\end{proo-thm}

\begin{rema}
It would be interesting to obtain a proof of Theorem \ref{main-intro} in the spirit of \cite{MR}, using Frobenius splitting techniques. However, we were not able to find a Frobenius splitting of $X = G/P_1 \times G/P_2$ with $P_1$ and $P_2$ cominuscule which compatibly splits the $B$-orbit closures. Note however that in type A there exists a Frobenius splitting which compatibly splits the $B$-orbit closures containing the closed orbit $Z$. This can be used to give an alternative proof of Theorem \ref{main-intro} in type~A.
\end{rema}

\section{Example of non normal closures}
\label{sec-exa}

In this section we give an counterexample to Theorem \ref{main-intro} and Corollary \ref{N-coro} for $G$ non simply laced.

Let $(e_i)_{i\in[1,6]}$ be the canonical basis in $k^6$. Define the symplectic form $\omega$ on $k^6$ by $\omega(e_i,e_j)=\delta_{7,i+j}$ for all $i < j$. Let $G$ be the symplectic group ${\rm Sp}_6$ of linear automorphisms preserving $\omega$. Let $P_1=P_2$ be the stabiliser of the $3$-dimensional isotropic subspace $\scal{e_1,e_2,e_3}$. Then $X=G/P_1\times G/P_2$ is the set of pairs of maximal (of dimension $3$) subspaces in $k^6$ isotropic for $\omega$. Consider the full isotropic flag
$$\scal{e_1}\subset \scal{e_1,e_2}\subset \scal{e_1,e_2,e_3}\subset \scal{e_1,e_2,e_3,e_4}\subset \scal{e_1,e_2,e_3,e_4,e_5}\subset k^6$$
and the Borel subgroup $B$ of $G$ stabilising this complete flag. We denote by $T$ the maximal torus defined by the basis $(e_i)_{i\in[1,6]}$. We denote by $\a_1$, $\a_2$ and $\a_3$ the simple roots of $G$ with notation as in \cite{bourb}.

We construct a $B$-orbit $\mathcal{O}$ for which Theorem \ref{main-intro} fails and prove that Corollary \ref{N-coro} also fails for $X$ (note that $G$ is not simply laced).

\begin{prop}
The closure of the $B$-orbit $\mathcal{O}$ of the element $x=(\scal{e_3,e_1+e_5,e_2+e_6},\scal{e_4,e_5,e_6})$ is not normal.
\end{prop}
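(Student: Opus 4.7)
My plan is to prove non-normality of $\overline{\mathcal{O}}$ at an explicit $T$-fixed point $x_0 \in \overline{\mathcal{O}} \setminus \mathcal{O}$ by writing $\overline{\mathcal{O}}$ locally as the preimage of a singular cubic hypersurface under a smooth morphism.

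I first produce such a limit point. The cocharacter $\chi(t) = \mathrm{diag}(1, t^{-1}, 1, 1, t, 1)$ lies in $\mathrm{Sp}_6$ (since $\chi_i \chi_{7-i} = 1$) and hence in $T \subset B$, and $\chi(t)\cdot x = (\scal{e_3, e_1 + te_5, e_2 + te_6}, \scal{e_4, e_5, e_6})$, so letting $t\to 0$ yields the $T$-fixed point $x_0 = (\scal{e_1, e_2, e_3}, \scal{e_4, e_5, e_6}) \in \overline{\mathcal{O}}$, which lies in a strictly smaller $B$-orbit than $x$.

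Next I pass to local coordinates around $x_0$. The variety $X$ is locally parametrized by pairs $(A, B) \in \mathrm{Sym}^2(k^3) \times \mathrm{Sym}^2(k^3)$ of symmetric matrices via the Lagrangian graph construction $L_1 = \{v + Av : v \in \scal{e_1, e_2, e_3}\}$, $L_2 = \{u + Bu : u \in \scal{e_4, e_5, e_6}\}$, where $\scal{e_4, e_5, e_6}$ is identified with the $\omega$-dual of $\scal{e_1, e_2, e_3}$. In these coordinates $x \leftrightarrow (A_0, 0)$, with $A_0$ having $1$'s in positions $(1,2), (2,1)$ and zeros elsewhere, and $x_0 \leftrightarrow (0, 0)$. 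Decomposing $B = U_{P_1}\cdot(B\cap L)$ with $P_1$ the Siegel parabolic and $L = GL_3$ its Levi, a direct calculation shows that $U_{P_1}$ (parametrized by $C \in \mathrm{Sym}^2(k^3)$) acts via $(A, B)\mapsto(A(I + CA)^{-1}, B + C)$, while $g \in B \cap L$ acts via $(A, B) \mapsto (g^{-T} A g^{-1}, g B g^T)$. Consequently
\[ \overline{\mathcal{O}}\cap\text{chart} = \phi^{-1}\bigl(\overline{B_L\cdot A_0}\bigr), \qquad \phi(A, B) := (I - AB)^{-1}A, \]
where $B_L = B \cap L$ acts on $\mathrm{Sym}^2(k^3)$ by $A \mapsto g^{-T} A g^{-1}$.

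Finally, I identify $\overline{B_L\cdot A_0}$ as a non-normal cubic hypersurface and transfer non-normality via $\phi$. Parametrizing the orbit by $h^T A_0 h$ with $h \in B_L$ upper triangular, its entries are exactly the rank-$1$ outer product of $(h_{11}, h_{12}, h_{13})$ and $(h_{22}, h_{23})$ (together with $A_{11} = 0$); the vanishing of all $2\times 2$ minors of this rank-$1$ matrix reduces to the single irreducible cubic
\[ A_{12}^2 A_{33} - 2 A_{12} A_{13} A_{23} + A_{13}^2 A_{22} = 0. \]
Thus $\overline{B_L\cdot A_0}$ is the $4$-dimensional hypersurface of $\{A_{11} = 0\}$ cut out by this cubic, whose singular locus $\{A_{12} = A_{13} = 0\}$ has codimension $1$; Serre's condition $R_1$ fails, so $\overline{B_L\cdot A_0}$ is non-normal at the origin. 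Since the differential of $\phi$ at $(0,0)$ is the projection onto the $A$-factor, $\phi$ is smooth at $(0, 0)$, and smooth morphisms both preserve and reflect normality, so $\overline{\mathcal{O}} = \phi^{-1}(\overline{B_L\cdot A_0})$ is non-normal at $x_0$. The main technical obstacle is the explicit identification of $\overline{B_L\cdot A_0}$ with the cubic above; once this is done, Serre's criterion and the standard behaviour of normality under smooth morphisms complete the argument.
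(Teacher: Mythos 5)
Your proof is correct, but it is a genuinely different argument from the one in the paper. The paper identifies, inside the weak-order graph $\Gamma(X)$, a configuration of four $B$-orbits $\mathcal{O}_0 < \mathcal{O}_1, \mathcal{O}_2 < \mathcal{O}$ with two U-edges and one N-edge; it then applies the criterion of \cite[Corollary 4.4.5]{survey} (the birational map $P_{\a_2}\times^B\overline{\mathcal{O}}_1\to\overline{\mathcal{O}}$ has a disconnected fibre over $\mathcal{O}_2$, so Zariski's Main Theorem forbids normality). In particular, the paper's route simultaneously produces a type-N edge, which it needs as a counterexample to Corollary \ref{N-coro}; your argument proves only the non-normality. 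Conversely, your approach is more self-contained and more explicit: it locates a concrete $T$-fixed point $x_0\in\overline{\mathcal{O}}$, linearizes the Lagrangian Grassmannian chart, constructs a smooth, $B$-equivariant invariant $\phi(A,B)=(I-AB)^{-1}A$ for the $U_{P_1}$-action, and reduces everything to the cubic hypersurface $\{A_{11}=0,\ \det A = 0\}$ in $\mathrm{Sym}^2(k^3)$, whose failure of $R_1$ along $\{A_{12}=A_{13}=0\}$ (codimension one) gives non-normality at the origin by Serre's criterion. Both approaches require an explicit computation; yours avoids the general machinery of orbit types U/N/T but needs the $U_{P_1}$-invariance identity for $\phi$, which I checked: $I - A'B' = (I+AC)^{-1}(I-AB)$ and $A' = (I+AC)^{-1}A$, so $\phi$ is indeed a $U_{P_1}$-invariant and $B_L$-equivariant, and $d\phi_{(0,0)}$ is the $A$-projection, hence $\phi$ is smooth there.

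One small imprecision worth fixing in the write-up: the entries of $h^T A_0 h$ are \emph{not} literally the entries of the $3\times 2$ outer product of $(h_{11},h_{12},h_{13})$ and $(h_{22},h_{23})$. Writing $h=(h_{ij})$ upper triangular, one gets
\[
h^T A_0 h = \begin{pmatrix} 0 & h_{11}h_{22} & h_{11}h_{23} \\ h_{11}h_{22} & 2h_{12}h_{22} & h_{12}h_{23}+h_{13}h_{22} \\ h_{11}h_{23} & h_{12}h_{23}+h_{13}h_{22} & 2h_{13}h_{23}\end{pmatrix},
\]
so $A_{22}$ and $A_{33}$ carry factors of $2$ and $A_{23}$ is a \emph{sum} of two outer-product entries, not a single one. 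The cleanest route is to observe that the cubic you wrote is simply $-\det A$ restricted to $\{A_{11}=0\}$, which obviously vanishes on $\{h^TA_0h\}$ since $\det A_0 = 0$; irreducibility and the dimension count $\dim B_L\cdot A_0 = 6-2 = 4$ then force equality with $\{A_{11}=0,\ \det A=0\}$. (Also note your argument, like the paper's, implicitly assumes $\operatorname{char} k \neq 2$.) With that rephrasing, the argument is complete.
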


\begin{proof}
To prove this result, we describe $B$-orbits $\mathcal{O}$ in $X$ such that the graph $B(X)$ contains the following subgraph (we denote by $P_{\a_1}$ and $P_{\a_2}$ the minimal parabolic subgroups containing $B$ associated to the simple roots $\a_1$ and $\a_2$).

\begin{tabular}{cccc}
\begin{tikzpicture}
  \node (0) at (2,8.2) {};

  \node (1) at (3,7.5)  {};
  \node (2) at (4,7.5) {};
  \node[anchor=west] (3) at (4,7.5)  {\small{raising of type $U$ with $P_{\a_1}$}};

\node (4) at (3,7)  {};
  \node (5) at (4,7) {};
  \node[anchor=west] (6) at (4,7)  {\small{raising of type $U$ with $P_{\a_2}$}};

  \node (7) at (3,6.5)  {};
  \node (8) at (4,6.5) {};
  \node[anchor=west] (9) at (4,6.5)  {\small{raising of type $N$ with $P_{\a_2}$}};

    \draw[double] (7) -- node {\tiny{2}} (8);
    \draw (4) -- node {\tiny{2}} (5);
    \draw (1) -- node {\tiny{1}} (2);
\end{tikzpicture}
&&&
 \begin{tikzpicture}
   \node (Y3) at (9,8)  {$\mathcal{O}$};
  \node (Y1) at (9,7) {$\mathcal{O}_1$};
  \node (Y2) at (11,7)  {$\mathcal{O}_2$};
  \node (Y) at (10,6)  {$\mathcal{O}_0$};

    \draw[double] (Y) -- node {\tiny{2}} (Y2);
    \draw (Y1) -- node {\tiny{2}} (Y3);
    \draw (Y) -- node {\tiny{1}} (Y1);
\end{tikzpicture}
\end{tabular}

\vskip 0.2 cm

\centerline{Subgraph of $\Gamma(X)$}

\medskip

If such a subgraph exists, we claim that the closure of $\mathcal{O}$ is not normal. This was proved in \cite[Corollary 4.4.5]{survey}, we reproduce the simple proof for the convenience of the reader: the morphism $P_{\a_2}\times^B\mathcal{O}_1\to \mathcal{O}$ is birational while its restriction $P_{\a_2}\times^B\mathcal{O}_0\to \mathcal{O}_2$ has non connected fibres. Zariski's Main Theorem gives the conclusion.

We are therefore left to prove that the above graph is indeed a subgraph of $\Gamma(X)$. Note that this will also produce a counterexample to Corollary \ref{N-coro} in the non simply laced case. We define the orbits $\mathcal{O}_0$, $\mathcal{O}_1$ and $\mathcal{O}_2$ as follows:

\begin{tabular}{l}
$\mathcal{O}_0$ is the $B$-orbit of $x_0=(\scal{e_1,e_2+e_4,e_3+e_5},\scal{e_4,e_5,e_6})$\\
$\mathcal{O}_1$ is the $B$-orbit of $x_1=(\scal{e_2,e_1+e_4,e_3+e_6},\scal{e_4,e_5,e_6})$\\
$\mathcal{O}_2$ is the $B$-orbit of $x_2=(\scal{e_1,e_3+e_4,e_2+e_5},\scal{e_4,e_5,e_6})$.\\
\end{tabular}

We first prove the following equalities: $P_{\a_1}x_0=P_{\a_1}x_1$, $P_{\a_2}x_0=P_{\a_2}x_2$ and $P_{\a_2}x_1=P_{\a_2}x$. For this is is enough to produce elements $p_1\in P_{\a_1}$, $p_2\in P_{\a_2}$ and $p\in P_{\a_2}$ such that $p_1x_0=x_1$, $p_2x_0=x_2$ and $px_1=x$. It is enough to take $p_1,p_2,p$ as follows:
$$p_1=\left(\begin{array}{cccccc}
0&1&0&0&0&0\\
1&0&0&0&0&0\\
0&0&1&0&0&0\\
0&0&0&1&0&0\\
0&0&0&0&0&1\\
0&0&0&0&1&0\\
\end{array}\right)\ \ 
p_2=\left(\begin{array}{cccccc}
1&0&0&0&0&0\\
0&1/\sqrt{2}&1/\sqrt{2}&0&0&0\\
0&1/\sqrt{2}&-1/\sqrt{2}&0&0&0\\
0&0&0&-1/\sqrt{2}&1/\sqrt{2}&0\\
0&0&0&1/\sqrt{2}&1/\sqrt{2}&0\\
0&0&0&0&0&1\\
\end{array}\right)$$ 
$$p=\left(\begin{array}{cccccc}
1&0&0&0&0&0\\
0&0&1&0&0&0\\
0&1&0&0&0&0\\
0&0&0&0&1&0\\
0&0&0&1&0&0\\
0&0&0&0&0&1\\
\end{array}\right).$$
Computing the stabiliser of $x_i$ for $i\in\{0,1,2,\emptyset\}$ in $B$, it is easy to compute the dimensions $\dim\mathcal{O}_0=8$, $\dim\mathcal{O}_1=9$, $\dim\mathcal{O}_2=9$ and $\dim\mathcal{O}=10$. Note also that the orbits $\mathcal{O}_1$ and $\mathcal{O}_2$ are distinct: write $x_i=(V_i,W_i)$ for $i\in\{1,2\}$, we have that $V_1$ is in the $B$-orbit of $\scal{e_3,e_5,e_6}$ while $V_2$ is in the $B$-orbit of $\scal{e_1,e_4,e_5}$. This proves that the above graph has the correct shape and we are left to proving that the types of the edges are as above.

To decide if the edge is of type ${\rm U}$, ${\rm T}$ or ${\rm N}$ we use the following criteria (see \cite[Page 405]{springer?} or \cite[Page 268]{brion1}: let $P$ be a minimal parabolic subgroup raising a $B$-orbit $\mathcal{O}$ to a $B$-orbit $\mathcal{O}'$. Let $x\in \mathcal{O}'$ and $P_x$ its stabiliser in $P$. Denote by $S$ the image of $P_x$ in ${\rm Aut}(P/B)$. Then we have:
\begin{itemize}
\item the edge is of type ${\rm U}$ if $S$ contains a positive dimensional unipotent subgroup,
\item the edge is of type ${\rm T}$ if $S$ is a maximal torus in ${\rm Aut}(P/B)$,
\item the edge is of type ${\rm N}$ if $S$ is the normaliser of a maximal torus in ${\rm Aut}(P/B)$.
\end{itemize}
An easy computation of stabiliser proves that the edges are of the above type finishing the proof.
\end{proof}

\end{document}